\numberwithin{equation}{section}
\newtheorem{theorem}{Theorem}[section]
\newtheorem{proposition}[theorem]{Proposition}
\newtheorem{lemma}[theorem]{Lemma}
\newtheorem{corollary}[theorem]{Corollary}
\theoremstyle{definition}
\newtheorem{definition}[theorem]{Definition}
\newtheorem{notation}[theorem]{Notation}
\theoremstyle{remark}
\newtheorem{remark}[theorem]{Remark}
\theoremstyle{remark}
\theoremstyle{remark}
\theoremstyle{remark}
\theoremstyle{remark}
\newcommand{\dist}{\mathrm{dist}}
\newcommand{\Lip}{\mathrm{Lip}}
\newcommand{\cA}{\mathcal{A}}
\newcommand{\cS}{\mathcal{S}}
\newcommand{\cH}{\mathcal{H}}
\newcommand{\eps}{\varepsilon}		
\newcommand{\pl}{\partial}
\newcommand{\pheq}{\phantom{=}}		
\newcommand{\ov}[1]{\overline{#1}}
\newcommand{\R}{\mathds{R}}
\renewcommand{\S}{\mathbb{S}}
\newcommand{\ps}[2]{\left\langle#1,#2\right\rangle}
\newcommand{\ton}[1]{\left(#1\right)}
\newcommand{\wto}{\rightharpoonup}
\newcommand{\B}[2]{B_{#1}\ifthenelse{\isempty{#2}}{}{\ton{#2}}}		
\newcommand{\dd}{\, d}		
\newcommand{\dv}{\operatorname{div}}
\newcommand{\Rm}{\operatorname{Rm}}
\newcommand{\Min}{\operatorname{Min}}
\newcommand{\cN}{\mathcal{N}}
\newcommand{\cM}{\mathcal{M}}
\newcommand{\ale}{\lesssim}
\newcommand{\Loc}{{\mathrm{Loc}}}
\newcommand{\supp}{\operatorname{supp}}
\title{A new $p$-harmonic map flow with Struwe monotonicity}
\subjclass[2020]{53E99, 35K92, 35B44}
\keywords{$p$-harmonic map flow, $p$-Laplacian in non-divergence form, energy monotonicity, epsilon-regularity}
\author{Erik Hupp}
\address[Erik Hupp]{
Department of Mathematics, 
Northwestern University, 
2033 Sheridan Road, 
Evanston IL 60208, USA}
\email{ehupp@math.northwestern.edu}
\author{Micha\l{} Mi\'{s}kiewicz}
\address[Micha\l{} Mi\'{s}kiewicz]{
Institute of Mathematics, 
University of Warsaw, 
Banacha 2, 
02-097 Warszawa, Poland}
\email{m.miskiewicz@mimuw.edu.pl}
\thanks{}
\date{}
\begin{document}

\begin{abstract}
We construct and analyze solutions to a regularized homogeneous $p$-harmonic map flow equation for general $p \geq 2$. The homogeneous version of the problem is new and features a monotonicity formula extending the one found by Struwe \cite{Str88} for $p = 2$; such a formula is not available for the nonhomogeneous equation. The construction itself is via a Ginzburg-Landau-type approximation \`a la Chen-Struwe \cite{CheStr89}, employing tools such as a Bochner-type formula and an $\eps$-regularity theorem. We similarly obtain strong subsequential convergence of the approximations away from a concentration set with parabolic codimension at least $p$. However, the quasilinear and non-divergence nature of the equation presents new obstacles that do not appear in the classical case $p = 2$, namely uniform-time existence for the approximating problem, and thus our basic existence result is stated conditionally. 
\end{abstract}

\maketitle
\vspace{-3mm}
{\small \tableofcontents}

\section{Introduction} \label{sec:Intro}

\subsection{Motivation and previous work}

One of central objects in differential geometry are harmonic maps $u \colon \cM^n \to \cN^m$, which are defined as critical points of the Dirichlet energy $E_2(u) = \int_\cM |\nabla u|^2$. The evolutional approach to studying these maps, based on the gradient flow of the energy called the \emph{harmonic map flow}, proved to be very fruitful. First considered by Eells and Sampson \cite{EelSam64} for a negatively curved target $\cN$, it was later developed for arbitrary targets, first in the conformal dimension $n = 2$ \cite{Str85} and then in the general case \cite{CheStr89}. With $\cN$ assumed to be embedded into some Euclidean space $\R^d$, the solution of the harmonic map flow can be described as solving the system 
\begin{equation}
\label{eq:classical-HMF}
\pl_t u - \Delta u = A_u(\nabla u, \nabla u)\,,
\end{equation}
where $A$ denotes the second fundamental form of $\cN \subseteq \R^d$. It is known that there exist global-in-time weak solutions; their possible singularities (forming a set of parabolic Hausdorff dimension at most $n$) are by now well understood. Certain special cases -- target $\cN$ having nonpositive sectional curvature; target $\cN$ being the round sphere $\S^m$; small initial energy $E_2(u_0)$; conformal dimension $n = 2$ -- allow for simplifications and/or stronger results. 

A natural extension of this theory is the study of $p$-harmonic maps, which in turn are critical points of the $p$-energy $E_p(u) = \int_\cM |\nabla u|^p$. As for their evolutional counterpart, there is extensive literature focusing on the \emph{(nonhomogeneous) $p$-harmonic map flow} system 
\begin{equation}
\label{eq:nonhomogeneous-p-flow}
\pl_t u - \Delta_p u = |\nabla u|^{p-2} A_u(\nabla u, \nabla u)\,,
\end{equation}
where now $\Delta_p u$ is the $p$-Laplace operator $\dv(|\nabla u|^{p-2} \nabla u)$. 

Hungerb\"uhler constructed global weak solutions in the conformal case $p = \dim \cM$ together with a~uniqueness result \cite[Th.~4.9,~5.1]{Hun94}, as well as for the flow into spheres \cite{CheHonHun94} with Chen and Hong (see also \cite[Th.~2.4]{Hun94}, and \cite{Hun96} for the more general case of homogeneous spaces). Similar results due to Fardoun and Regbaoui are known in the case of small initial energy $E_p(u_0)$ \cite{FarReg03} and nonpositive section curvature of $\cN$ \cite{FarReg02}. Recently, Misawa obtained global solutions also in the general case \cite{Mis19} -- see Remark \ref{rmk:Misawa-rmk} for a comparison between this last paper's estimates and ours. 

\medskip

However, there is an ambiguity in forming the gradient flow of the $p$-energy functional for $p \neq 2$, and the nonhomogeneous formulation in \eqref{eq:nonhomogeneous-p-flow} is not the only possible choice. Instead, one could consider the $p$-energy as $2$-energy: $E_p(u) = \int_\cM |\nabla u|^{2} |\nabla u|^{p-2}$, where $|\nabla u|^{p-2}$ is treated as a weight. Formally computing the gradient of the functional $E_p$ at $u$ with respect to the inner product $\int_\cM \ps{\cdot}{\cdot}|\nabla u|^{p - 2}$ leads to the following gradient flow: 
\begin{equation}
\label{eq:unreg-p-HMF}
\pl_t u - |\nabla u|^{2-p} \Delta_p u = A_u(\nabla u,\nabla u)\,,
\end{equation}
which we call the \emph{homogeneous $p$-harmonic map flow}. Since the operator $|\nabla u|^{2-p} \Delta_p u$ is ill-defined at points where the gradient vanishes, in this paper we focus on the regularized version of this system: 
\begin{equation}
\label{eq:reg-p-HMF-intro}
\pl_t u - (|\nabla u|^2 + \delta^2)^\frac{2 - p}{2} \dv( (|\nabla u|^2 + \delta^2)^\frac{p - 2}{2} \nabla u) = A_u(\nabla u,\nabla u)\,,
\end{equation}
in which $|\nabla u|^2$ is replaced by $|\nabla u|^2 + \delta^2$, with some $\delta > 0$ fixed. Whenever it is possible, we derive estimates with constants independent of $\delta$.

The homogeneous formulation of the $p$-harmonic map flow \eqref{eq:unreg-p-HMF} has a number of analytic advantages over its nonhomogeneous counterpart, which we describe in detail in the next subsection. But first, let us note that the homogeneous $p$-Laplace operator $|\nabla u|^{2-p} \Delta_p u$ (for scalar functions) appears naturally in game theory as describing the value function of tug-of-war games with noise \cite{PerShe08}, hence it is also called the ``game-theoretic'' or ``normalized'' $p$-Laplace operator in the literature. 

The flow \eqref{eq:unreg-p-HMF} and its regularization \eqref{eq:reg-p-HMF-intro} have precedent in the scalar case, in which the target manifold is simply $\cN = \R$. Here, Banerjee and Garofalo \cite{BanGar13} prove existence and regularity, with a view toward $p \searrow 1$; the authors also derive a Struwe-type monotonicity formula. This scalar setting has been attracting an increasing amount of attention, for instance in the work of \cite{JinSil17}, \cite{AttPar18}, \cite{ParVaz20}, and others.

The fundamental obstacle in our context is that \eqref{eq:unreg-p-HMF} and \eqref{eq:reg-p-HMF-intro} are \emph{systems} of equations which cannot be reduced to the scalar case. Hence, most of the methods used in \cite{BanGar13} and other works are not applicable. Although a viscosity-type interpretation is possible for systems (see \cite{Kat22}), it is designed for regular solutions and may not be well-adapted to analyzing the singularities expected in the geometric case. For this and other technical reasons, we have stopped short of sending $\delta \to 0$, although we note that taking $p = 2$ recovers the classical system, even for $\delta>0$. 

\subsection{Main results}

Arguably the main advantage of the homogeneous $p$-harmonic map flow \eqref{eq:unreg-p-HMF} is that it supports a Struwe-type monotone quantity strictly analogous to the $p = 2$ case: by a formal computation, 
\begin{equation}
\Phi_{(t,x)} (s; u) := s^{p/2} \int_{\R^n} |\nabla u_{t - s}|^p (x-y) \rho_s(x) \dd x
\end{equation}
is a nondecreasing function of $s > 0$ (here, $\rho_s$ is the standard heat kernel in $\R^n$). An analogous monotonicity formula holds for the regularized flow \eqref{eq:reg-p-HMF-intro} (see Lemma \ref{lem:delta-monotonicity-formula}) and even for its Ginzburg-Landau approximation (see Theorem \ref{th:monotonicity-formula}) which we now mention briefly. 

Following the general strategy of \cite{CheStr89}, we consider a Ginzburg-Landau approximation of the regularized flow \eqref{eq:reg-p-HMF-intro}. The idea is to relax the geometric constraint $u_t(x) \in \cN$ by considering maps into the ambient space $\R^d$ and at the same time adding a penalizing term $K^2 F(u)$ to the energy functional; thus, the nonlinearity on the right hand side is replaced by a lower order term $- \frac 12 K^2 \nabla F(u)$. Here, $K > 0$ is a large parameter eventually taken to the limit $K \to \infty$, and $F \colon \R^d \to \R$ is a smooth truncation of $\dist^2(x,\cN)$ (see Section \ref{sec:G-L} for details). Most of our results apply equally to the $\delta$-flow \eqref{eq:reg-p-HMF-intro} and to the $(\delta,K)$-flow -- its Ginzburg-Landau approximation. 

Regular solutions of the $(\delta,K)$-flow satisfy a Bochner-type formula 
\begin{equation}
(\pl_t-\cA) |\nabla u|_{\delta,K}^p
\ale_\cN |\nabla u|_{\delta,K}^{p+2}\,,
\end{equation}
where $|\nabla u|^2_{\delta,K}$ is a shorthand for $|\nabla u|^2 + \delta^2 + K^2 F(u)$ and $\cA$ 
is a suitably chosen elliptic operator (see Theorem \ref{th:Bochner}). A similar formula holds for the $\delta$-flow (see Lemma \ref{lem:delta-Bochner-explicit}). 

With these two ingredients in place, we were able to derive an $\eps$-regularity result, which we again state for the $(\delta,K)$-flow (see Theorem \ref{th:eps-regularity}). That is, if the monotone quantity $\Phi^{\delta,K}_{(0,0)}(1)$ -- defined as above, but with $|\nabla u|$ replaced by $|\nabla u|_{\delta,K}$ -- is smaller than $\eps_0(n,\cN)$, then $|\nabla u|_{\delta,K}$ is uniformly bounded in a small cylinder. Crucially, the constant $\eps_0$ \emph{does not depend} on $\delta$ or $K$. This is again a clear analogue of Chen and Struwe's $\eps$-regularity for the Ginzburg-Landau approximation in the case $p=2$. However, the proof of weighted $L^2$ Hessian estimates within $\eps$-regularity regions (Theorem \ref{th:eps-reg-W22}) requires new ideas, including a~trick using a stationary-like equation, given that the integration-by-parts method used for $p = 2$ is no longer available in this non-divergence setting. 

Finally, we study the $K \to \infty$ limit of the approximating solutions, obtaining strong convergence away from a concentration set with bounded $(n + 2 - p)$-Minkowski content. Moreover, we obtain weak $W^{1,2}_{\Loc}$ convergence to a weak solution of \eqref{eq:reg-p-HMF-intro} over the whole space-time domain (Theorem \ref{th:HMF-distr}).

\medskip

As already noted, the homogeneous flow is a quasilinear parabolic system in nondivergence form. The general theory for such systems does not provide global-in-time existence results, even for uniformly strongly elliptic operators (in the sense of Legendre) such as the $\delta$-regularized homogeneous $p$-Laplacian. Hence, even for the simplest case of the $\delta$-flow \eqref{eq:reg-p-HMF-intro} with $\cN = \R^d$, i.e. with zero right-hand side, the existence of global solutions is an open problem. We direct the reader to Remark \ref{rmk:flow-existence} for a more in-depth discussion. For these reasons, our construction is conditional -- we construct the solution of the geometric flow assuming that its (much better behaved) Ginzburg-Landau approximation can be solved. 

\subsection{Outline}

Sections \ref{sec:G-L}, \ref{sec:Eps-Reg}, and \ref{sec:K-Limit} follow the general strategy used by Chen and Struwe \cite{CheStr89} in the case $p = 2$, although technical obstacles for $p > 2$ require new ideas in certain critical parts of the argument. More precisely, in Section \ref{sec:G-L} we introduce the Ginzburg-Landau approximation to the $\delta$-flow \eqref{eq:reg-p-HMF-intro}. We discuss and derive some preliminary results for its solutions, including the $p > 2$ analog of Struwe's celebrated monotonicity formula. Then in Section \ref{sec:Eps-Reg} we derive a new Bochner inequality, which leads to an $\eps$-regularity theorem. We  also establish weighted $L^2$ Hessian estimates in $\eps$-regularity regions. Section \ref{sec:K-Limit} is devoted to studying the $K \to \infty$ limit of Ginzburg-Landau approximating solutions -- first in $\eps$-regularity regions and then globally. 

Finally, Section \ref{sec:Apps} aims to reproduce the results known for $p = 2$ in certain interesting special cases. It is shown that the $\delta$-flow \eqref{eq:reg-p-HMF-intro} is better behaved under the geometric assumption that the target manifold is negatively curved, as well as under the analytic assumption that the initial energy is small. 

\subsection{Notations and conventions}

Let us fix the following, possibly nonstandard, general notation that will be used without further comment throughout the paper.

\begin{notation}
\leavevmode
\begin{itemize}

\item The domain of the flow $u$ will be $(0,\infty) \times \R^n$ and by default, the time variable $t$ is the first coordinate. We shall use subscript to denote evaluation, so that $u_t$ will denote the map $x \mapsto u_t(x)$; the time derivative will be denoted as $\pl_t u$. 

\item The standard heat kernel on Euclidean space evaluated at a point $(t,x)\in (0,\infty) \times \R^n$ is denoted by $\rho_{t}(x) := (4\pi t)^{-n/2}e^{-\frac{|x|^2}{4t}}$. 

\item The solution for the heat equation on $(0,\infty)\times \R^n$ with initial data (function or tempered distribution) $f \colon \R^n \to \R$ and evaluated at $(t,x)$ is denoted by
\begin{equation}
H_t[f](x) := \int \rho_t(x - y)f(y) \dd y\,,
\end{equation}
with $H_0 [f] = f$. 

\item A (homogeneous) parabolic cylinder of radius $r$ centered at the point $(t,x) =: z \in \R\times\R^n$ is denoted by
\begin{equation}
P_r(z) := \{(s,y) \in \R \times \R^n \mid 0 < t - s < r^2,\, |x - y| < r\}\,,
\end{equation}
and the parabolic (metric) ball is denoted by 
\begin{equation}
Q_r(z) := \{(s,y) \in \R \times \R^n \mid |t - s| < r^2,\, |x - y| < r\}\,.
\end{equation}

\end{itemize}
\end{notation}

\subsection{Acknowledgments}

The research of the first named author was partially supported by NSF Grant DMS-1502632 RTG: Analysis on Manifolds, while the research of the second named author was supported by the NCN Sonatina grant no. 2020/36/C/ST1/00050. Both authors would like to express their gratitude to prof. Aaron Naber for suggesting this research topic as well as for many helpful discussions.

\section{The Ginzburg-Landau approximation} \label{sec:G-L}

\subsection{Definitions and notation}

The regularization we introduce has two aspects. First, the homogeneous $p$-Laplace operator $|\nabla u|^{2-p} \dv(|\nabla u|^{p-2} \nabla u)$ can be approximated by considering the $\delta$-energy 
\begin{equation}
E_{\delta}(u) := \frac 1p \int \left( |\nabla u|^2 + \delta^2 \right)^{p/2}\,,
\end{equation}
where $|\nabla u|^2$ was replaced by $\delta^2+|\nabla u|^2$. The corresponding operator is 
\begin{equation}
(\delta^2 + |\nabla u|^2)^{\frac{2-p}{2}} \dv\left( (\delta^2 + |\nabla u|^2)^{\frac{p-2}{2}} \nabla u \right)\,.
\end{equation}
In fact, for this paper we restrict our attention to operators regularized in this way, with $\delta > 0$ fixed throughout.

Second, even with this regularized operator, constructing the corresponding flow with values in $\cN$ is non-trivial. In analogy to the case $p = 2$ considered by Chen-Struwe \cite{CheStr89}, we introduce a Ginzburg-Landau type approximation -- this means that the constraint $u(x) \in \cN$ is replaced by adding a penalizing term $F$ with a large weight. That is, we choose a function $F(p)$ that behaves like $\dist^2(p,\cN)$ and consider the operator 
\begin{equation}
(\delta^2 + K^2 F(u) + |\nabla u|^2)^{\frac{2-p}{2}} \dv\left( (\delta^2 + K^2 F(u) + |\nabla u|^2)^{\frac{p-2}{2}} \nabla u \right) - \frac 12 K^2 (\nabla F) (u)\,.
\end{equation}
This precise form is justified by considering the energy functional, which we define as

\begin{definition}[Penalized Energy]
Choose $r_0 = r_0(\cN) > 0$ so that nearest-point projection $\pi_{\cN}: B_{r_0}(\cN) \subset \R^d \to \cN$ is well-defined and smooth. We let 
\begin{equation}
\label{eq:energy-delta-K}
E_{\delta,K}(u) := \frac 1p \int \left( |\nabla u(x)|^2 + F_K(u(x)) + \delta^2 \right)^{p/2}\,,
\end{equation}
where
\begin{align}
F_K(y) &:= K^2 F(y) := K^2\chi(\dist^2(y,\cN))\,, \\
\chi(t)&:= \begin{cases}
t,& t \leq r_0^2 \,, \\
(2r_0)^2,& t \geq (2r_0)^2 \,, \\
\text{smooth, increasing},& r_0^2 \leq t \leq (2r_0)^2 \,.
\end{cases}
\end{align}
\end{definition}

Note that subscripts may be dropped in later sections when there is no cause for confusion about which parameters $\delta,K > 0$ are being considered.

\begin{notation}
In order to streamline computations, we introduce some shorthands related to the penalized energy density and related quantities:
\begin{equation}
e_{\delta,K}(u) := \delta^2 + F_K(u) + |\nabla u|^2\,,
\qquad 
|\nabla u|_{\delta,K} := \sqrt{e_{\delta,K}(u)} = (\delta^2 + F_K(u) + |\nabla u|^2)^{1/2}\,,
\end{equation}
which correspond to $e(u) := e_{0,0}(u) = |\nabla u|^2$ and $|\nabla u|_{0,0} = |\nabla u|$ in the case of the classical harmonic map flow. Similarly, one can consider $e_\delta(u) := e_{\delta,0}(u)$ and $|\nabla u|_\delta := |\nabla u|_{\delta,0}$. 
\end{notation}

With this notation, the energy $E_{\delta,K}$ has the form 
\begin{equation}
E_{\delta,K}(u) 
= \int (e_{K,\delta}(u))^{p/2} 
= \int |\nabla u|_{\delta,K}^p\,,
\end{equation}
and computing an appropriately scaled Euler-Lagrange equation for $E_{\delta,K}$ yields the evolutionary equation, which we may refer to as $(\delta,K)$-flow: 
\begin{equation}
\label{eq:flow-delta-K}
\partial_t u - |\nabla u|_{\delta,K}^{2-p} \dv \left( |\nabla u|_{\delta,K}^{p-2} \nabla u \right) 
= - \frac 12 (\nabla F_K)(u)\,.
\end{equation}
Similarly, let us also record the $\delta$-regularized $p$-harmonic map flow equation, or more succinctly $\delta$-flow: 
\begin{equation}
\label{eq:reg-flow}
\pl_t u - |\nabla u|_\delta^{2 - p}\dv(|\nabla u|_\delta^{p-2} \nabla u) = A_u(\nabla u, \nabla u)\,,
\end{equation}
introduced earlier as \eqref{eq:reg-p-HMF-intro}. 

\subsection{Basic Properties of the $(\delta,K)$-flow}

Let us begin with some terminology from the theory of elliptic systems (see \cite[Def.~3.36]{GiaMar12}):

\begin{definition}
A second order (linear) differential operator on $C^\infty(\Omega\subset\R^n,\R^d)$ with principal symbol $a_{\alpha,\beta}^{k,i}(x)\xi^\alpha\xi^\beta \eta_k \eta_i$ ($x\in\Omega$, $\xi \in \R^n$, $\eta \in \R^d$) is elliptic in the sense of Legendre at $x$ if
\begin{equation}
\lambda|h|^2\leq \sum a_{\alpha,\beta}^{k,i}(x)h_\alpha^kh_\beta^i \leq \Lambda |h|^2
\end{equation}
for some $0 < \lambda \leq \Lambda$ and all $d\times n$ matrices $h$, with $|h|$ denoting the Hilbert-Schmidt norm. The ellipticity is uniform if $\lambda,\Lambda$ can be chosen independently of $x\in\Omega$.
\end{definition}

We will now verify that the operator in \eqref{eq:flow-delta-K} is elliptic. To this end, let us write it in the form
\begin{equation}
\label{eq:coefficients-of-operator}
|\nabla u|_{\delta,K}^{2-p} \dv \left( |\nabla u|_{\delta,K}^{p-2} \nabla u^k \right) - \frac 12 (\nabla F_K^k)(u) 
= \sum_{\alpha,\beta,i} a_{\alpha,\beta}^{k,i}(u, \nabla u) \cdot \pl_{\alpha\beta} u^i + b^k(u,\nabla u)\,.
\end{equation}

\begin{proposition}
The operator in \eqref{eq:coefficients-of-operator} above is well-defined, with smooth $a_{\alpha,\beta}^{k,i}(\cdot,\cdot)$, $b^{k}(\cdot,\cdot)$. The matrix $a_{\alpha,\beta}^{k,i}(\cdot,\cdot)$ is elliptic in the sense of Legendre, uniformly in $\delta$ and $K$. Moreover, the lower order term $b^k(\cdot,\cdot)$ is bounded by a constant times $K^2$. 
\end{proposition}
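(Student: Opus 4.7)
The plan is to unfold the quasilinear operator by the chain and product rules, read off the coefficients $a_{\alpha,\beta}^{k,i}$ and $b^k$ explicitly, and then verify smoothness, ellipticity, and the size of $b^k$ directly from the formulas. The key observation driving everything is that although the factors $|\nabla u|_{\delta,K}^{\pm(p-2)}$ depend badly on $\delta$ and $K$ separately, they enter the operator only through the ratio $|\nabla u|^2 / e_{\delta,K}(u)$, which is bounded by $1$ regardless of how small $\delta$ is or how large $K$ is. This will yield uniform-in-$(\delta,K)$ ellipticity.

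First I would compute $\nabla e_{\delta,K}(u) = 2\sum_{i,\beta} \pl_\beta u^i\, \pl_\alpha\pl_\beta u^i + K^2 (\nabla F)(u)\cdot \nabla u$ and then expand
\begin{equation*}
|\nabla u|_{\delta,K}^{2-p}\dv(|\nabla u|_{\delta,K}^{p-2}\nabla u^k)
= \Delta u^k + \frac{p-2}{e_{\delta,K}(u)}\sum_{\alpha,\beta,i}\pl_\alpha u^k\,\pl_\beta u^i\,\pl_\alpha\pl_\beta u^i + \text{(first-order in $u$)},
\end{equation*}
where the first-order contribution and the $-\tfrac12 (\nabla F_K)(u)$ term together form $b^k$. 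Reading off the principal coefficient gives
\begin{equation*}
a_{\alpha,\beta}^{k,i}(u,\nabla u) = \delta^{ki}\delta_{\alpha\beta} + (p-2)\,\frac{\pl_\alpha u^k\,\pl_\beta u^i}{e_{\delta,K}(u)},
\end{equation*}
which is smooth in $(u,\nabla u)$ since $e_{\delta,K}(u) \geq \delta^2 > 0$ and $F_K$ is smooth; analogously $b^k$ is a smooth function of $(u,\nabla u)$.

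For Legendre ellipticity, the critical computation is to test against a $d\times n$ matrix $h$:
\begin{equation*}
\sum_{k,i,\alpha,\beta} a_{\alpha,\beta}^{k,i}\, h^k_\alpha h^i_\beta
= |h|^2 + \frac{p-2}{e_{\delta,K}(u)}\Bigl(\sum_{k,\alpha} \pl_\alpha u^k\, h^k_\alpha\Bigr)^{\!2}.
\end{equation*}
Since $p\geq 2$, the second term is nonnegative, giving the lower bound $\lambda = 1$. For the upper bound, apply Cauchy--Schwarz to the inner product to get $(\sum \pl_\alpha u^k h^k_\alpha)^2 \leq |\nabla u|^2\,|h|^2$, and then use the trivial but essential inequality $|\nabla u|^2 \leq e_{\delta,K}(u)$ built into the definition of $e_{\delta,K}$. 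This yields $\Lambda = p-1$, independent of both $\delta$ and $K$.

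Finally, the bound on $b^k$ follows by inspection: both summands in $b^k$ carry a factor $\nabla F_K(u) = K^2 (\nabla F)(u)$, and by the construction of $F$ as a smooth truncation of $\dist^2(\cdot,\cN)$, $\nabla F$ is globally bounded on $\R^d$ by a constant depending only on $\cN$. The ratio $|\nabla u|^2/e_{\delta,K}(u)$ that appears multiplying one of the summands is again $\leq 1$, so no $\delta$- or $K$-dependence sneaks in beyond the advertised $K^2$. The only part requiring any thought is making sure the ellipticity bound is genuinely uniform in $\delta$ and $K$; the rest is bookkeeping. The $\delta > 0$ is only used qualitatively, to ensure the denominator $e_{\delta,K}$ does not vanish and hence that the coefficients are smooth functions of their arguments.
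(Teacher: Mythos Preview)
Your proposal is correct and follows essentially the same approach as the paper: expand the divergence to read off the coefficients $a_{\alpha,\beta}^{k,i} = \delta_{\alpha\beta}\delta^{ki} + (p-2)\,\pl_\alpha u^k\,\pl_\beta u^i / e_{\delta,K}(u)$, verify Legendre ellipticity with constants $\lambda = 1$ and $\Lambda = p-1$ via the inequality $|\nabla u|^2 \le e_{\delta,K}(u)$, and bound $b^k$ using the global boundedness of $\nabla F$ together with that same ratio bound. The paper phrases the key inequality as ``the matrices $\pl_\alpha u^k / |\nabla u|_{\delta,K}$ have norm bounded by $1$,'' but this is exactly your observation $|\nabla u|^2/e_{\delta,K}(u) \le 1$.
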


\begin{proof}
Since $|\nabla u|_{\delta,K}$ is bounded below by $\delta$, one can simply evaluate the divergence in \eqref{eq:coefficients-of-operator} to see 
that we can choose the coefficients $a$ and $b$ as follows: 
\begin{align}
\label{eq:operator-coefficients}
a_{\alpha,\beta}^{k,i} 
& = \delta_{\alpha,\beta} \delta_{k,i} + (p-2) \frac{\pl_\alpha u^k}{|\nabla u|_{\delta,K}} \cdot \frac{\pl_\beta u^i}{|\nabla u|_{\delta,K}}\,, \\
b^k & = K^2 \cdot \left( - \frac 12 \pl_k F (u) + \frac{p-2}{2} \sum_{\alpha,i} \pl_i F(u) \, \frac{\pl_\alpha u^i}{|\nabla u|_{\delta,K}} \cdot \frac{\pl_\alpha u^k}{|\nabla u|_{\delta,K}} \right)\,.
\end{align}
As all the matrices $\pl_\alpha u^k / |\nabla u|_{\delta,K}$ have norm bounded by $1$, $b$ is indeed bounded by a constant (depending on $\cN$) times $K^2$. Now, take any matrix $h_\alpha^i$ and consider the sum 
\begin{equation}
\label{eq:Legendre-condition}
\sum_{\alpha,\beta,i,k} a_{\alpha,\beta}^{k,i} h_\alpha^k h_\beta^i 
= \sum_{\alpha,k} |h_\alpha^k|^2 + (p-2) \left| \sum_{\alpha,k} \frac{\pl_\alpha u^k}{|\nabla u|_{\delta,K}} \cdot h_\alpha^k \right|^2\,.
\end{equation}
This is bounded below by $|h|^2$ (using the Hilbert-Schmidt norm, as usual) and above by $(p-1) |h|^2$, thus implying uniform monotonicity. 
\end{proof}

Moreover, we have a parabolic scaling symmetry for solutions to \eqref{eq:flow-delta-K}.

\begin{proposition}[parabolic scaling]
\label{prop:para-scal}
Let $u$ be a solution of \eqref{eq:flow-delta-K} with parameters $\delta,K$. Then the rescaled flow $u^{(\lambda)}(t,x) := u(\lambda^2 t, \lambda x)$ is a solution of \eqref{eq:flow-delta-K} with parameters $\lambda \delta,\lambda K$. 
\end{proposition}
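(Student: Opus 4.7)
The plan is a direct computation verifying that both the left- and right-hand sides of \eqref{eq:flow-delta-K} pick up a common factor of $\lambda^2$ under the rescaling, so that the equation is preserved with the new parameters $\lambda\delta$, $\lambda K$. First I would record the elementary identities $\pl_t u^{(\lambda)}(t,x) = \lambda^2 (\pl_t u)(\lambda^2 t, \lambda x)$ and $\nabla u^{(\lambda)}(t,x) = \lambda (\nabla u)(\lambda^2 t, \lambda x)$. The critical observation is the joint scaling of the regularized gradient: since $F(u^{(\lambda)}(t,x)) = F(u)(\lambda^2 t, \lambda x)$ and $(\lambda K)^2 = \lambda^2 K^2$, $(\lambda\delta)^2 = \lambda^2 \delta^2$, one gets
\begin{equation}
|\nabla u^{(\lambda)}|_{\lambda\delta,\lambda K}^2(t,x) = \lambda^2 \delta^2 + \lambda^2 K^2 F(u) + \lambda^2 |\nabla u|^2 \big|_{(\lambda^2 t,\lambda x)} = \lambda^2 \, |\nabla u|_{\delta,K}^2(\lambda^2 t,\lambda x).
\end{equation}
This single identity is what makes the claim work; once it is in hand, everything else is bookkeeping.

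With this, the nondivergence term transforms cleanly: inside the divergence,
\begin{equation}
|\nabla u^{(\lambda)}|_{\lambda\delta,\lambda K}^{p-2}\, \nabla u^{(\lambda)}(t,x) = \lambda^{p-1}\, |\nabla u|_{\delta,K}^{p-2} \nabla u \big|_{(\lambda^2 t,\lambda x)},
\end{equation}
and applying $\dv_x$ pulls out another factor of $\lambda$, yielding $\lambda^p\, \dv(|\nabla u|_{\delta,K}^{p-2}\nabla u)$ at $(\lambda^2 t, \lambda x)$. Multiplying by the prefactor $|\nabla u^{(\lambda)}|_{\lambda\delta,\lambda K}^{2-p} = \lambda^{2-p} |\nabla u|_{\delta,K}^{2-p}$ gives a net $\lambda^2$ scaling that matches $\pl_t u^{(\lambda)}$. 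For the forcing term, $\nabla F_{\lambda K}(u^{(\lambda)}) = (\lambda K)^2 (\nabla F)(u) = \lambda^2\, \nabla F_K(u)$ at $(\lambda^2 t, \lambda x)$, again producing an overall factor of $\lambda^2$. Dividing through by $\lambda^2$ then shows that $u^{(\lambda)}$ satisfies \eqref{eq:flow-delta-K} with parameters $\lambda\delta$ and $\lambda K$. There is no serious obstacle here; the only point that deserves care is checking that $\delta^2$, $K^2 F$, and $|\nabla u|^2$ all rescale with the same power of $\lambda$, which is precisely why the regularization was chosen with $\delta$ and $K$ appearing quadratically.
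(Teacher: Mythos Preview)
Your argument is correct; the paper states this proposition without proof, treating the computation as routine, and your direct verification is exactly the intended check.
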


This is in contrast with the non-homogeneous $p$-flow $\pl_t u - \dv(|\nabla u|^{p-2} \nabla u) = |\nabla u|^{p-2} A_u(\nabla u, \nabla u)$, where the equation is invariant with respect to a different scaling: $u^{(\lambda)}(t,x) := u(\lambda^p t, \lambda x)$.

Finally, we record two energy inequalities which generalize estimates known in the classical case $p = 2$. We note however that in the case $p \neq 2$, the resulting integral is weighted by a nonzero power of $|\nabla u|_{\delta,K}$.

\begin{lemma}[Global energy inequality]
\label{lem:energy-inequality}
Let $\cM$ be a closed manifold and let $u \colon (-\eps,\eps) \times \cM \to \R^d$ be a smooth solution of \eqref{eq:flow-delta-K} with parameters $\delta,K$. Then the global energy $E_{\delta,K}(u_t) := \frac 1p \int_{\cM} |\nabla u|^p_{\delta,K}$ is a~non-increasing function of $t$, and more precisely, 
\begin{equation}
\pl_t \left( E_{\delta,K}(u_t) \right)
= - \int_{\cM} |\nabla u|^{p-2}_{\delta,K} |\pl_t u|^2\,.
\end{equation}
\end{lemma}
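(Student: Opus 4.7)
The plan is a direct variational computation: differentiate the energy under the integral, integrate by parts in the gradient term (no boundary terms since $\cM$ is closed), and then substitute the PDE to recognize the resulting integrand.

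First I would compute $\pl_t E_{\delta,K}(u_t)$ by differentiating inside the integral. Since
\begin{equation}
\pl_t \bigl( |\nabla u|^2 + F_K(u) + \delta^2 \bigr) = 2 \langle \nabla u, \nabla \pl_t u \rangle + (\nabla F_K)(u) \cdot \pl_t u,
\end{equation}
the chain rule applied to $\frac{1}{p}(e_{\delta,K}(u))^{p/2}$ gives
\begin{equation}
\pl_t E_{\delta,K}(u_t) = \int_\cM |\nabla u|_{\delta,K}^{p-2} \langle \nabla u, \nabla \pl_t u \rangle \dd x + \frac{1}{2}\int_\cM |\nabla u|_{\delta,K}^{p-2} (\nabla F_K)(u) \cdot \pl_t u \dd x.
\end{equation}

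Next I would integrate by parts in the first term. Since $\cM$ is closed there is no boundary contribution, and the factor $|\nabla u|_{\delta,K}^{p-2}$ is smooth by the regularization $\delta > 0$. This yields
\begin{equation}
\pl_t E_{\delta,K}(u_t) = -\int_\cM \left[ \dv\bigl( |\nabla u|_{\delta,K}^{p-2} \nabla u \bigr) - \tfrac{1}{2} |\nabla u|_{\delta,K}^{p-2} (\nabla F_K)(u) \right] \cdot \pl_t u \dd x.
\end{equation}

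Finally, I would substitute the $(\delta,K)$-flow equation \eqref{eq:flow-delta-K}, multiplied through by $|\nabla u|_{\delta,K}^{p-2}$, which gives exactly
\begin{equation}
|\nabla u|_{\delta,K}^{p-2} \pl_t u = \dv\bigl( |\nabla u|_{\delta,K}^{p-2} \nabla u \bigr) - \tfrac{1}{2} |\nabla u|_{\delta,K}^{p-2} (\nabla F_K)(u),
\end{equation}
so that the bracket in the previous display is precisely $|\nabla u|_{\delta,K}^{p-2} \pl_t u$. The identity in the lemma follows immediately, and monotonicity is then a consequence of $p \geq 2$ and $|\nabla u|_{\delta,K} \geq \delta > 0$.

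There is no real obstacle here: the computation works cleanly because the homogeneous formulation \eqref{eq:flow-delta-K} was \emph{designed} to be the gradient flow of $E_{\delta,K}$ with respect to the $|\nabla u|_{\delta,K}^{p-2}$-weighted $L^2$ inner product, and the factor $|\nabla u|_{\delta,K}^{2-p}$ on the elliptic operator in the equation cancels exactly with the power produced by differentiating the energy density. The only subtle point worth highlighting is the use of $\delta > 0$, which ensures smoothness of all integrands and legitimizes both the differentiation under the integral and the integration by parts; the nonhomogeneous flow \eqref{eq:nonhomogeneous-p-flow} would instead produce an unweighted $\int |\pl_t u|^2$, which is the essential difference underlying everything that follows.
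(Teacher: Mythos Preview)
Your proof is correct and follows essentially the same approach as the paper: differentiate the energy density, integrate by parts using that $\cM$ is closed, and substitute the flow equation \eqref{eq:flow-delta-K} multiplied by $|\nabla u|_{\delta,K}^{p-2}$. The only remark is that nonnegativity of the integrand $|\nabla u|_{\delta,K}^{p-2}|\pl_t u|^2$ follows simply from $|\nabla u|_{\delta,K} \ge \delta > 0$, so the condition $p \ge 2$ is not actually needed for the monotonicity statement.
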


\begin{proof}
Note that $\pl_t |\nabla u|^2_{\delta,K} = 2 \nabla u \cdot \nabla \pl_t u + \nabla F_K(u) \cdot \pl_t u$, and hence differentiating $\frac 1p |\nabla u|^p_{\delta,K}$ and then integrating in space gives: 
\begin{align}
\pl_t \left( \frac 1p |\nabla u|^p_{\delta,K} \right) 
& = |\nabla u|^{p-2}_{\delta,K} \nabla u \cdot \nabla \pl_t u + \frac 12 |\nabla u|^{p-2}_{\delta,K}  \nabla F_K(u) \cdot \pl_t u\,, \\
\pl_t E_{\delta,K}(u)
& = \pl_t \left( \frac 1p \int |\nabla u|^p_{\delta,K} \right) \notag\\
& = \int - \dv (|\nabla u|^{p-2}_{\delta,K} \nabla u) \cdot \pl_t u + \frac 12 |\nabla u|^{p-2}_{\delta,K} \nabla F_K(u) \cdot \pl_t u \notag\\
& = - \int |\nabla u|^{p-2}_{\delta,K} \left( |\nabla u|^{2-p}_{\delta,K} \dv (|\nabla u|^{p-2}_{\delta,K} \nabla u) - \frac 12 \nabla F_K(u) \right) \cdot \pl_t u\notag\\
& = - \int |\nabla u|^{p-2}_{\delta,K} |\pl_t u|^2\,. 
\end{align}
\end{proof}

\begin{remark}
Due to the positive term $\delta > 0$, the above Lemma \ref{lem:energy-inequality} does not hold if $\cM = \R^n$, simply because the energy $E_{\delta,K}$ is infinite. To resolve this issue, one can consider a modified version, e.g. with the integrand $|\nabla u|_{\delta,K}^p - \delta^p$. However, it is the local version below that we use in the sequel (except for Theorem~\ref{th:nonpositive-curvature-limit}), so we have restricted our attention to flows on a closed manifold to minimize unilluminating technical details. 
\end{remark}

We will make extensive use of a localized variant of Lemma \ref{lem:energy-inequality}, which we package as follows

\begin{lemma}[Local energy inequality]
\label{lem:loc-energy-ineq}
Suppose that $u$ is a smooth solution of the $(\delta,K)$-flow \eqref{eq:flow-delta-K} on $P_2$. Then
\begin{equation}
\int_{P_1} |\nabla u|^{p-2}_{\delta,K}|\partial_t u|^2 
\leq C(p,n) \int_{P_2} |\nabla u|^{p}_{\delta,K}\,.
\end{equation}
\end{lemma}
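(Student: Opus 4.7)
My plan is to mimic the proof of the global energy inequality (Lemma~\ref{lem:energy-inequality}), but with the test "function" $\partial_t u$ multiplied by $\eta^2 |\nabla u|^{p-2}_{\delta,K}$, where $\eta(t,x) = \chi(t)\phi(x)$ is a smooth space-time cutoff with $\eta \equiv 1$ on $P_1$, $\supp \eta \subset \ov{P_2}$, $\chi(-4) = 0$ and $\chi \equiv 1$ on a neighborhood of $t = 0$, and $\phi$ supported in $B_2$. The key observation mirroring the global case is that, if one writes
\begin{equation*}
\eta^2 |\nabla u|^{p-2}_{\delta,K} \, \nabla u \cdot \nabla \pl_t u
= \tfrac{1}{p} \eta^2 \pl_t \ton{|\nabla u|^p_{\delta,K}}
- \tfrac{1}{2} \eta^2 |\nabla u|^{p-2}_{\delta,K} (\nabla F_K)(u) \cdot \pl_t u,
\end{equation*}
then the $\nabla F_K$ contribution arising from the chain rule exactly cancels the forcing term $-\tfrac 12 (\nabla F_K)(u) \cdot \pl_t u$ coming from the right-hand side of \eqref{eq:flow-delta-K}.

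Concretely, first I would take the inner product of \eqref{eq:flow-delta-K} with $\eta^2 |\nabla u|^{p-2}_{\delta,K} \pl_t u$ and integrate over $P_2$. The divergence term, after integration by parts in space (no boundary contribution since $\phi$ is compactly supported), produces one good term $\eta^2 |\nabla u|^{p-2}_{\delta,K} \nabla u \cdot \nabla \pl_t u$, which is handled by the identity above, plus a cutoff error $2 \eta |\nabla u|^{p-2}_{\delta,K} (\pl_t u \otimes \nabla \phi) \colon \nabla u$. After the $\nabla F_K$ cancellation described above, what remains is the identity
\begin{equation*}
\int_{P_2} \eta^2 |\nabla u|^{p-2}_{\delta,K} |\pl_t u|^2
= -\tfrac{1}{p} \int_{P_2} \eta^2 \pl_t \ton{|\nabla u|^p_{\delta,K}}
- 2 \int_{P_2} \eta |\nabla u|^{p-2}_{\delta,K} (\pl_t u \otimes \nabla \phi) \colon \nabla u.
\end{equation*}

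Next, I would integrate by parts in time in the first term on the right. Because $\chi(-4) = 0$ while $\chi(0) = 1$, this produces the boundary contribution $-\tfrac 1 p \int_{\{t=0\}} \phi^2 |\nabla u|^p_{\delta,K}$, which has the favorable sign and can be discarded, plus the volume term $\tfrac 2p \int \eta \, \pl_t \eta \cdot |\nabla u|^p_{\delta,K}$. The gradient cutoff term is controlled by Young's inequality,
\begin{equation*}
2 \eta |\nabla u|^{p-2}_{\delta,K} |\pl_t u| \,|\nabla \phi| \,|\nabla u|
\leq \tfrac{1}{2} \eta^2 |\nabla u|^{p-2}_{\delta,K} |\pl_t u|^2 + 2 |\nabla \phi|^2 |\nabla u|^{p-2}_{\delta,K} |\nabla u|^2,
\end{equation*}
and the last factor is bounded by $|\nabla u|^p_{\delta,K}$ since $p \geq 2$ and $|\nabla u| \leq |\nabla u|_{\delta,K}$. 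Absorbing the $\tfrac 12$-term into the left and using $\abs{\nabla \eta}, \abs{\pl_t \eta} \leq C(n)$ yields the claim.

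The only step requiring care is the time integration by parts: one must choose the cutoff so that $\chi$ vanishes at the earlier time boundary $t = -4$ (to kill the unwanted boundary term) while still equalling $1$ at the later boundary $t = 0$ (so that the remaining boundary contribution has the right sign to be dropped). Every other step is dictated by the algebraic structure already exhibited in Lemma~\ref{lem:energy-inequality}; the only genuinely new feature for $p > 2$ is the weight $|\nabla u|^{p-2}_{\delta,K}$, which is harmless because it is monotone in $|\nabla u|$ and appears symmetrically on both sides of Young's inequality.
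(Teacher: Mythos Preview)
Your proposal is correct and follows essentially the same approach as the paper: localize the global energy identity of Lemma~\ref{lem:energy-inequality} with a spacetime cutoff, integrate by parts so that the $\nabla F_K$ contributions cancel, handle the cross term coming from $\nabla\eta$ via Young's (Peter--Paul) inequality, and discard the favorably signed boundary term at $t=0$. The only cosmetic difference is that the paper first writes the differential inequality for $\pl_t\bigl(\tfrac1p\int|\nabla u|^p_{\delta,K}\chi^2\bigr)$ and then integrates in time, whereas you pair the equation directly with $\eta^2|\nabla u|^{p-2}_{\delta,K}\pl_t u$; these are the same computation. One minor slip: your cutoff error term should carry $\nabla\eta=\chi\nabla\phi$ rather than $\nabla\phi$, but since $|\chi|\le 1$ this is harmless for the estimate.
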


\begin{proof}
Let $\chi \in C_c^\infty((-\infty,0] \times \R^n)$ be a smooth cutoff satisfying $\mathbbm{1}_{P_1} \le \chi \le \mathbbm{1}_{P_2}$ with derivatives $\nabla \chi$, $\pl_t \chi$ bounded by dimensional constants (notice that $\chi$ does not vanish at time $t=0$). Then we proceed with the previous energy inequality computation, but this time with error terms due to the cutoff: 
\begin{align}
\pl_t \left( \frac 1p \int_{\R^n} |\nabla u|^p_{\delta,K} \chi^2 \right)
& = - \int |\nabla u|^{p-2}_{\delta,K} |\pl_t u|^2 \chi^2 \notag\\
& \pheq + \frac 1p \int |\nabla u|^p_{\delta,K} \pl_t (\chi^2) - \int |\nabla u|^{p-2}_{\delta,K} \nabla u \, \pl_t u \, \nabla(\chi^2) \notag\\
& \le - \frac 12 \int |\nabla u|^{p-2}_{\delta,K} |\pl_t u|^2 \chi^2 
+ C(p,n) \int |\nabla u|^{p}_{\delta,K} \left( |\nabla \chi|^2 + |\pl_t \chi| \right)\,,
\end{align}
using Peter-Paul for the final inequality. We now integrate this inequality in time, keeping in mind that the integral over the time slice $t = -4$ vanishes: 
\begin{align}
\label{eq:local-energy-ineq-main-eq}
\frac 1p \int_{B_1} |\nabla u_0|^p_{\delta,K} + \frac 12 \int_{P_1} |\nabla u|^{p-2}_{\delta,K} |\pl_t u|^2 
& \le \frac 1p \int_{\R^n} |\nabla u_0|^p_{\delta,K} \chi_0^2 
+ \frac 12 \int_{(-4,0) \times \R^n} |\nabla u|^{p-2}_{\delta,K} |\pl_t u|^2 \chi^2 \notag\\
& \le C(p,n) \int_{(-4,0) \times \R^n} |\nabla u|^{p}_{\delta,K} \left( |\nabla \chi|^2 + |\pl_t \chi| \right) \notag\\
& \le C(p,n) \int_{P_2} |\nabla u|^{p}_{\delta,K}\,,
\end{align}
recalling the bounds on derivatives of $\chi$.
\end{proof}

We close this subsection with a remark about the difficulty of obtaining long-time solutions to the $(\delta,K)$-flow:

\begin{remark}\label{rmk:flow-existence}
The $(\delta,K)$-flow \eqref{eq:flow-delta-K} for fixed $\delta > 0$ and $K < \infty$ can be categorized as a quasilinear, uniformly parabolic, non-divergence form system, with principal symbol depending on the spatial derivative $\nabla u$. Using the Bochner formula in the upcoming Theorem \ref{th:Bochner}, along with the maximum principle, we may also assume $K$-dependent bounds on the gradient: 
\begin{equation}
\sup_{[0, T) \times \R^n}|\nabla u|_{\delta,K}
\ale_{\cN,K,p, T}\sup_{\R^n}|\nabla u_0|_{\delta,K} < \infty \,.
\end{equation}
These facts taken together, however, are not enough to conclude smoothness and long-time existence from the existing theory. Let us note that it would suffice to obtain $[\nabla u_t]_{C^{\alpha}(\R^n)} < C(T) < \infty$ for all $0 < t \leq T < \infty$, e.g. by \cite[Prop.~8.2, Pg.~389]{Tay11}. Let us also list a few ``near misses'' that do not quite apply in this context:
\begin{itemize}
\item Scalar quasilinear theory: The usual techniques used to prove existence for the $\delta$-regularized homogeneous $p$-flow in the scalar case, and quasilinear uniformly parabolic non-divergence form scalar equations in general as in \cite[Ch.~VI.1]{LadSolUra68}, do not carry over to systems in an obvious way.
\item Non-divergence systems with special form: For systems whose principal symbol is decoupled, one can usually apply scalar methods. For systems whose principal symbol depends on $u$ but not on $\nabla u$, one can apply e.g. \cite[Ch.~VII.5-7]{LadSolUra68}. Unfortunately, the $(\delta,K)$-flow has neither of these special forms.
\item Systems in divergence form: The non-homogeneous $p$-harmonic map flow into flat $\R^d$ enjoys higher regularity, see e.g. \cite{Wie86}. These methods do not apply in an obvious way to the non-divergence form $(\delta,K)$-flow.
\end{itemize}
We stress that the gap between boundedness and H\"older regularity for the gradient is not related to the geometric aspect of the $(\delta, K)$-flow, and indeed remains unresolved even for the $\delta$-flow \eqref{eq:reg-flow} into flat $\R^n$.
\end{remark}

\subsection{The monotonicity formula}

One of the main attributes of the $(\delta,K)$-flow that is not shared by its non-homogeneous counterpart is an analogue to Struwe's monotonicity formula for $p = 2$, see \cite[Prop.~3.3]{Str88}. 

\begin{definition}
Given parameters $\delta,K$, map $u \in L^\infty([0,T); W^{1,p}(\R^n,\R^d))$, point in spacetime $(t,x) \in [0,T)\times \R^n$, and $0 < s \le t$, we define the quantity 
\begin{equation}
\Phi^{\delta,K}_{(t,x)} (s; u) := s^{p/2} H_s |\nabla u_{t - s}|_{\delta,K}^p (x)\,.
\end{equation}
We will drop $\delta$, $K$, $u$ and write $\Phi_{(t,x)} (s)$, whenever it does not lead to confusion. 
\end{definition}

\begin{remark}
One can check that $\Phi$ enjoys the following invariance with respect to parabolic scaling: 
\begin{equation}
u^{(\lambda)}(t,x) = u(\lambda^2 t, \lambda x)
\quad \Longrightarrow \quad 
\Phi_{(0,0)}^{\delta,K}(\lambda^2 s; u) = \Phi_{(0,0)}^{\lambda \delta, \lambda K} \big( s; u^{(\lambda)} \big)\,.
\end{equation}
\end{remark}

\begin{remark}
The following monotonicity formula for $\Phi$ appears in the paper by Banerjee and Garofalo \cite{BanGar13} in the context of the scalar homogeneous $p$-flow. 
\end{remark}

\begin{theorem}[Monotonicity formula]
\label{th:monotonicity-formula}
If $u \in L^\infty([0,T); W^{1,p}(\R^n,\R^d))$ is a smooth solution of the $(\delta,K)$-flow \eqref{eq:flow-delta-K}, then $s \mapsto \Phi^{\delta,K}_{(t,x)}(s)$ is a non-decreasing function. More precisely, its derivative is exactly 
\begin{equation}
\pl_s \Phi_{(t,x)}^{\delta,K}(s) 
= \frac{p}{2} s^{\frac{p-4}{2}} H_s \left( |\nabla u_{t-s}|_{\delta, K}^{p-2} \left( \frac 12 |2 s \pl_t u_{t-s} - (y-x) \nabla u_{t-s}|^2 
+ s \delta^2 + s F_K(u_{t-s}) \right) \right) (x)\,.
\end{equation}
\end{theorem}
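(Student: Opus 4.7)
The plan is to translate so that $(t,x) = (0,0)$ (permissible by translation invariance of the equation and heat kernel) and compute the derivative of $\Phi(s) = s^{p/2} g(s)$ where $g(s) := \int \rho_s(-y) |\nabla u_{-s}(y)|^p_{\delta,K} \dd y$. Writing $E := |\nabla u|^p_{\delta,K}$, $h := |\nabla u|^{p-2}_{\delta,K}$, and $\tau$ for the time variable of the flow, differentiation under the integral gives
\begin{equation*}
\pl_s \Phi = s^{p/2}\left[\tfrac{p}{2s}\, g(s) + g'(s)\right], \qquad g'(s) = \int (\pl_s \rho_s)\, E \dd y - \int \rho_s\, (\pl_\tau E)|_{\tau=-s} \dd y.
\end{equation*}

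The key algebraic ingredient is a stress-energy-type identity derived from the flow itself. Multiplying \eqref{eq:flow-delta-K} by $h$ puts it in divergence form $\pl_\beta(h\, \pl_\beta u^k) = h\, \pl_\tau u^k + \tfrac{h}{2}\, \pl_k F_K(u)$; this is the step that exploits the homogeneous formulation, and is essentially why a Struwe-type monotone quantity is available. Combining it with the pointwise identity $h\, \pl_\beta u^k\, \pl_{\alpha\beta} u^k = \tfrac{1}{p}\pl_\alpha E - \tfrac{h}{2}\, \pl_k F_K(u)\, \pl_\alpha u^k$ (valid for any index $\alpha$, spatial or temporal) via the Leibniz rule yields, after a clean cancellation of the $F_K$-terms,
\begin{equation*}
\pl_\beta\bigl(h\, \pl_\beta u^k\, \pl_\alpha u^k\bigr) = h\, \pl_\tau u^k\, \pl_\alpha u^k + \tfrac{1}{p}\, \pl_\alpha E.
\end{equation*}

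I would apply this in two ways. Taking $\alpha = \tau$, integrating against $\rho_s$, and using $\nabla_y \rho_s = -\tfrac{y}{2s}\rho_s$ gives $\int \rho_s\, \pl_\tau E = -p\int \rho_s h |\pl_\tau u|^2 + \tfrac{p}{2s}\int \rho_s\, y^\alpha h\, \pl_\alpha u^k \pl_\tau u^k$. For the heat-kernel piece, use $\pl_s \rho_s = -\tfrac{1}{2s}\dv(y\rho_s)$ (direct from the Gaussian formula) to rewrite $\int(\pl_s \rho_s)\, E = \tfrac{1}{2s}\int \rho_s\, y^\alpha \pl_\alpha E$; then insert the stress-energy identity with spatial $\alpha$ and integrate by parts using $\pl_\beta(y^\alpha \rho_s) = \rho_s\, \delta_{\alpha\beta} - \tfrac{y^\alpha y^\beta}{2s}\rho_s$, producing terms in $h|\nabla u|^2$ and the angular quantity $h\, |y\cdot \nabla u|^2$.

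Assembling $\tfrac{p}{2s}g + g'$ and substituting $|\nabla u|^2_{\delta,K} = |\nabla u|^2 + F_K + \delta^2$, the $h|\nabla u|^2$ contributions cancel outright, while the $|\pl_\tau u|^2$, $y^\alpha \pl_\alpha u^k \pl_\tau u^k$, and $|y\cdot \nabla u|^2$ terms recombine into the perfect square $\tfrac{1}{4s^2}|2s\pl_\tau u - y\cdot \nabla u|^2$; the $F_K$ and $\delta^2$ pieces survive with the positive coefficient $\tfrac{1}{2s}$. Multiplying by $s^{p/2}$ produces the claimed identity, from which the monotonicity $\pl_s \Phi \geq 0$ is visible by inspection. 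The main obstacle I anticipate is not conceptual but organizational -- there are enough competing terms that sign errors and miscancellations are easy -- yet everything hinges on the single observation that weighting by $h$ converts the homogeneous $(\delta,K)$-flow into a divergence-form equation whose $F_K$ lower-order term cancels against the one produced by differentiating $|\nabla u|^2_{\delta,K}$, allowing Struwe's $p=2$ strategy to go through with the correct reweighting.
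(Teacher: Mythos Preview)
Your proposal is correct, and the stress--energy identity $\partial_\beta(h\,\partial_\beta u^k\,\partial_\alpha u^k) = h\,\partial_\tau u^k\,\partial_\alpha u^k + \tfrac{1}{p}\partial_\alpha E$ is the right engine: the $F_K$-cancellation you flag is genuine, and the assembly into the perfect square goes through exactly as you describe.

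However, your route is genuinely different from the paper's. The paper does not differentiate in $s$ directly. Instead it exploits the parabolic scaling invariance $\Phi_{(0,0)}^{\delta,K}(\lambda^2;u) = \Phi_{(0,0)}^{\lambda\delta,\lambda K}(1;u^{(\lambda)})$, reduces to computing $\partial_\lambda|_{\lambda=1}$ at the fixed scale $s=1$, and recovers general $s$ by rescaling at the end. The point of this is that the $\lambda$-derivative of the rescaled map $u^{(\lambda)}$ produces the vector $-2\partial_t u + x\nabla u$ \emph{in one stroke}, so after a single integration by parts the perfect square appears almost immediately; the $\delta^2$ and $F_K$ terms arise separately, from differentiating the parameters $\lambda\delta$, $\lambda K$. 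Your approach instead keeps $s$ arbitrary and uses the stress--energy identity twice (once with $\alpha=\tau$, once spatial) together with the heat-kernel identity $\partial_s\rho_s = -\tfrac{1}{2s}\dv(y\rho_s)$; this is more hands-on and requires tracking more terms, but it has the virtue of making the divergence-form structure (the feature that distinguishes the homogeneous flow from the nonhomogeneous one) completely explicit. Both arguments rest on the same underlying observation---that weighting by $h = |\nabla u|_{\delta,K}^{p-2}$ restores a divergence structure---but the paper packages it via scaling while you package it via a Pohozaev-type identity.
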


\begin{proof}
Without loss of generality, we can take $(t,x) = (0,0)$ (with $u$ defined for negative time). For simplicity, we will also consider the derivative of $\Phi_{(0,0)}(s)$ at $s = 1$; a simple scaling argument will then establish the general case. To simplify even more, we substitute $s = \lambda^2$ and consider $\Phi_{(0,0)}^{\delta,K}(\lambda^2; u) = \Phi_{(0,0)}^{\lambda \delta, \lambda K}(1; u^{(\lambda)})$, where $u^{(\lambda)}$ is the parabolic rescaling of $u$ as in Proposition \ref{prop:para-scal}. 

In result: 
\begin{equation}
\pl_\lambda \Phi_{(0,0)}^{\delta,K}(\lambda^2; u) 
= \pl_\lambda \Phi_{(0,0)}^{\lambda \delta, \lambda K}(1; u^{(\lambda)}) 
= \int_{\R^n} \pl_\lambda \left( |\nabla u^{(\lambda)}_{-1}|_{\lambda \delta, \lambda K}^p \right) \cdot \rho_1 \dd x\,. 
\end{equation}
Here the $\lambda$-derivative involves the parameters $\lambda \delta$, $\lambda K$ as well, so 
\begin{align}
\pl_\lambda \left( |\nabla u^{(\lambda)}_{-1}|_{\lambda \delta, \lambda K}^2 \right) 
& = \pl_\lambda \left( \big| \nabla u^{(\lambda)}_{-1} \big|^2 + \lambda^2 \delta^2 + \lambda^2 K^2 F \big( u^{(\lambda)}_{-1} \big) \right) \notag\\
& = 2 \nabla u^{(\lambda)}_{-1}\cdot \nabla \pl_\lambda u^{(\lambda)}_{-1} + \lambda^2 K^2 \nabla F(u^{(\lambda)}_{-1}) \cdot \pl_\lambda u^{(\lambda)}_{-1} + 2 \lambda \delta^2 + 2 \lambda K^2 F(u^{(\lambda)}_{-1})\,,
\end{align}
and the expression for the $p$-th power is similar. The last two terms are positive, so we can just record their contribution to the final result: 
\begin{equation}
\label{eq:additional-contribution}
\lambda \cdot p \int_{\R^n} |\nabla u^{(\lambda)}_{-1}|_{\lambda \delta, \lambda K}^{p-2} \left( \delta^2 + F_K(u^{(\lambda)}_{-1})\right) \cdot \rho_1 \dd x\,.
\end{equation}
For the first two terms, we integrate by parts to get 
\begin{gather}
p \int_{\R^n} |\nabla u^{(\lambda)}_{-1}|_{\lambda \delta, \lambda K}^{p-2} \left( \nabla u^{(\lambda)}_{-1}\cdot \nabla \pl_\lambda u^{(\lambda)}_{-1} + \frac{\lambda^2}{2} \nabla F_K(u^{(\lambda)}_{-1})\cdot \pl_\lambda u^{(\lambda)}_{-1} \right) \cdot \rho_1 \dd x \notag\\
= p \int_{\R^n} \left( -\dv \left( |\nabla u^{(\lambda)}_{-1}|_{\lambda \delta, \lambda K}^{p-2} \nabla u^{(\lambda)}_{-1} \rho_1 \right) +  \frac{\lambda^2}{2} |\nabla u^{(\lambda)}_{-1}|_{\lambda \delta, \lambda K}^{p-2} \nabla F_K(u^{(\lambda)}_{-1}) \rho_1 \right) \cdot \pl_\lambda u^{(\lambda)}_{-1} \dd x\,.
\end{gather}
Specializing to $\lambda = 1$, we have 
\begin{equation}
\pl_\lambda  \big|_{\lambda=1} u^{(\lambda)}_{-1}(x)
= \pl_\lambda \big|_{\lambda=1} u_{-\lambda^2}(\lambda x)
= -2 \pl_t u_{-1}(x) + x \nabla u_{-1}(x)\,.
\end{equation}
Dropping the subscript $-1$ to avoid clutter, the previous expression with $\lambda = 1$ can be rewritten as 
\begin{align}
p \int_{\R^n} & \left( -\dv \left( |\nabla u|_{\delta,K}^{p-2} \nabla u \rho_1 \right) + \frac 12 |\nabla u|_{\delta,K}^{p-2} \nabla F_K(u) \rho_1 \right) \cdot (-2 \pl_t u + x \nabla u) \dd x \notag\\
& = p \int_{\R^n} \left( -|\nabla u|_{\delta,K}^{p-2} \nabla u \cdot \nabla \rho_1 
- \dv \left( |\nabla u|_{\delta,K}^{p-2} \nabla u \right) \rho_1 + \frac 12 |\nabla u|_{\delta,K}^{p-2} \nabla F_K(u) \rho_1 \right) \cdot (-2 \pl_t u + x \nabla u) \dd x \notag\\
& = p \int_{\R^n} \left( -|\nabla u|_{\delta,K}^{p-2} \nabla u \cdot \left( -\frac x2 \right) \rho_1 - |\nabla u|_{\delta,K}^{p-2} \pl_t u \rho_1 \right) \cdot (-2 \pl_t u + x \nabla u) \dd x\,,
\end{align}
if we note that $\nabla \rho_1(x) = -\frac{x}{2} \rho_1(x)$, and exploit our equation \eqref{eq:flow-delta-K}. Taking $\rho_1$ and $|\nabla u|_{\delta,K}^{p-2}$ outside the brackets, we note a perfect square above. Together with the contribution from \eqref{eq:additional-contribution}, we have 
\begin{equation}
\pl_\lambda \big|_{\lambda=1} \Phi_{(0,0)}(\lambda^2; u)
= p \int_{\R^n} |\nabla u|_{\delta,K}^{p-2} \left( \frac 12 |2 \pl_t u - x \nabla u|^2 + \delta^2 + F_K(u) \right) \rho_1 \dd x\,.
\end{equation}
This is clearly nonnegative, which shows that $\Phi$ is non-decreasing. 

\medskip

In order to derive a similar formula for general $s$, we write $\mu := s^{1/2}$, take $v := u^{(\mu)}$, and then use the result above: 
\begin{align}
2s \pl_s \Phi_{(0,0)}^{\delta,K}(s; u) 
& = \pl_\lambda \big|_{\lambda=1} \Phi_{(0,0)}^{\delta,K}(\lambda^2 \mu^2; u) \notag\\
& = \pl_\lambda \big|_{\lambda=1} \Phi_{(0,0)}^{\mu \delta, \mu K}(\lambda^2; v) \notag\\
& = p \int_{\R^n} |\nabla v|_{\mu \delta, \mu K}^{p-2} \left( \frac 12 |2 \pl_t v - x \nabla v|^2 + \mu^2 \delta^2 + \mu^2 K^2 F(v) \right) \rho_1 \dd x\,.
\end{align}
To obtain the desired formula for $\pl_s \Phi_{(0,0)}^{\delta,K}(s; u)$, one just needs to express this quantity back in terms of~$u$.  
\end{proof}

\section{$\eps$-regularity for the Ginzburg-Landau approximation} \label{sec:Eps-Reg}

\subsection{The Bochner inequality}

For later applications, it will be important that the energy density $|\nabla u|_{\delta,K}^p$ is a subsolution of a parabolic equation. To obtain such a differential inequality, we require a suitable Bochner formula. 

However, unless $p = 2$, the system \eqref{eq:flow-delta-K} is coupled -- meaning that the coefficients in \eqref{eq:operator-coefficients} are not of the form $a^{k,i}_{\alpha,\beta} = a_{\alpha,\beta}\delta^{i,k}$ -- and this obscures which parabolic operator to apply to $|\nabla u|_{\delta,K}^p$. One can see different choices of operators in the literature, applied to different powers of $|\nabla u|_{\delta,K}$. Let us note for instance the similarity of our choice -- denoted $\cA$ in Theorem \ref{th:Bochner} below -- with $\text{L}_{\text{const.}}$ in \cite[p.~6]{WaLi16}, in the comparable setting of producing Bochner formulas for the weighted $p$-Laplacian on scalar-valued functions and corresponding non-homogeneous heat flow (see also \cite[l.~(2.6)]{WanYanChe13} for the full operator of which $\text{L}$ is the second order piece).

\begin{notation}\label{not:e-u}
To improve readability of the computations, we use the further notational shorthand $e_u := e_{\delta,K}(u)$. This will be used only in this subsection, where $\delta > 0$ and $K < \infty$ are fixed, so there can be no cause for confusion. 
\end{notation}

\begin{theorem}
\label{th:Bochner}
If $u$ is a smooth solution of the $(\delta,K)$-flow \eqref{eq:flow-delta-K}, and $\cA$ is the elliptic operator 
\begin{equation}
\cA \psi := \Delta \psi + (p - 2) |\nabla u|_{\delta,K}^{-2} 
\sum_{i} \nabla^2 \psi(\nabla u^i, \nabla u^i)\,,
\end{equation}
then the $p$-energy density $|\nabla u|_{\delta,K}^p$ satisfies the following Bochner-type formula: 
\begin{equation}
(\pl_t-\cA) |\nabla u|_{\delta,K}^p
\ale_\cN |\nabla u|_{\delta,K}^{p+2}\,.
\end{equation}
In the region where $u$ takes values in the $r_0$-neighborhood of $\cN$, there is an even stronger estimate: 
\begin{equation}
(\pl_t-\cA) |\nabla u|_{\delta,K}^p + c(p) |\nabla u|_{\delta,K}^{p-2} \left( |\nabla^2 u|^2 + K^4 F_u \right)
\ale_\cN |\nabla u|_{\delta,K}^{p+2} 
\end{equation}
with some small constant $c(p) > 0$.
\end{theorem}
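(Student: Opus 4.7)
The plan is to first derive an exact identity for $(\partial_t - \cA) e_u$ with $e_u := e_{\delta,K}(u) = |\nabla u|_{\delta,K}^2$ and then pass to $e_u^{p/2} = |\nabla u|_{\delta,K}^p$ via the chain rule. The key observation is that the convexity correction from the chain rule exactly cancels the problematic first-order terms that appear in the identity for $e_u$, leaving only contributions with manifestly favorable signs (or, outside the tube, contributions dominated by $e_u^2$).

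To produce the Bochner identity for $e_u$, I would first rewrite \eqref{eq:flow-delta-K} in non-divergence form as $\partial_t u^k = \Delta u^k + \tfrac{p-2}{2 e_u}\partial_\alpha u^k \partial_\alpha e_u - \tfrac 12(\partial_k F_K)(u)$. Differentiating in $\partial_\gamma$, contracting with $\partial_\gamma u^k$, and summing yields an identity for $\tfrac 12 \partial_t |\nabla u|^2$ in which the standard Bochner trick $\partial_\gamma u^k \Delta \partial_\gamma u^k = \tfrac 12 \Delta|\nabla u|^2 - |\nabla^2 u|^2$ produces the negative Hessian term, and the other second-order contributions organize as $\tfrac 12(\cA e_u - \Delta e_u)$. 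Separately, I compute $\partial_t F_K(u) = (\partial_k F_K)(u) \partial_t u^k$ by substituting the PDE; the penalty contribution yields the nonpositive quadratic $-\tfrac 12 |(\nabla F_K)(u)|^2$. Summing the two identities and using $\Delta|\nabla u|^2 + (\partial_k F_K)(u)\Delta u^k = \Delta e_u - \sum_\alpha \nabla^2 F_K(u)(\partial_\alpha u, \partial_\alpha u)$ together with the telescoping $\nabla e_u \cdot \nabla|\nabla u|^2 + \nabla(F_K\circ u)\cdot \nabla e_u = |\nabla e_u|^2$ (which follows from $\nabla e_u = \nabla|\nabla u|^2 + \nabla(F_K\circ u)$) gives
\begin{equation}\label{eq:proof-prop-Bochner-eu}
(\partial_t - \cA) e_u = -2|\nabla^2 u|^2 - \tfrac 12|(\nabla F_K)(u)|^2 - 2\sum_\alpha \nabla^2 F_K(u)(\partial_\alpha u, \partial_\alpha u) + \tfrac{p-2}{2e_u}|\nabla e_u|^2 - \tfrac{p-2}{e_u^2}\sum_k(\nabla u^k\cdot\nabla e_u)^2.
\end{equation}

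Now I apply the chain rule to $\psi = e_u^{p/2}$ under $\cA = a^{\alpha\beta}\partial_{\alpha\beta}$ with $a^{\alpha\beta} = \delta^{\alpha\beta} + (p-2) e_u^{-1}\partial_\alpha u^i \partial_\beta u^i$, obtaining the extra second-variation correction $-\tfrac{p(p-2)}{4} e_u^{(p-4)/2}\bigl[|\nabla e_u|^2 + (p-2) e_u^{-1}\sum_i(\nabla u^i\cdot\nabla e_u)^2\bigr]$. Matching against \eqref{eq:proof-prop-Bochner-eu} multiplied by $\tfrac p2 e_u^{(p-2)/2}$, the $|\nabla e_u|^2$ contributions cancel exactly, and the $\sum_k(\nabla u^k\cdot\nabla e_u)^2$ contributions combine into $-\tfrac{p^2(p-2)}{4} e_u^{(p-6)/2}\sum_k(\nabla u^k\cdot\nabla e_u)^2 \le 0$. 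What remains is
\begin{equation}
(\partial_t - \cA) e_u^{p/2} = -p e_u^{(p-2)/2}\bigl[|\nabla^2 u|^2 + \sum_\alpha \nabla^2 F_K(u)(\partial_\alpha u, \partial_\alpha u) + \tfrac 14|(\nabla F_K)(u)|^2\bigr] - \tfrac{p^2(p-2)}{4} e_u^{(p-6)/2}\sum_k(\nabla u^k\cdot\nabla e_u)^2.
\end{equation}

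Inside the tube $u \in B_{r_0}(\cN)$ we have $F_K = K^2\dist^2(\cdot, \cN)$, so $|(\nabla F_K)(u)|^2 = 4 K^4 F(u)$ and $\nabla^2 F_K(u)$ is positive semidefinite. All four terms on the right are then nonpositive, with the coefficient of $K^4 F(u)\cdot e_u^{(p-2)/2}$ equal to exactly $-p$; adding $c(p) e_u^{(p-2)/2}(|\nabla^2 u|^2 + K^4 F(u))$ with any $c(p) \in (0, p)$ still keeps the left-hand side $\le 0 \le C_\cN e_u^{(p+2)/2}$. Outside the tube, $F(u) \ge r_0^2$ forces $K^2 \le e_u/r_0^2$, and smooth bounds on the truncation $\chi$ give $|\nabla^2 F_K(u)(\partial_\alpha u, \partial_\alpha u)| \ale_\cN K^2|\nabla u|^2 \ale_\cN e_u^2$, so this contribution is bounded by $C_\cN e_u^{(p+2)/2}$ while the remaining (negative) terms can be dropped, yielding the global weak inequality. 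The main obstacle is the bookkeeping in deriving \eqref{eq:proof-prop-Bochner-eu}: the various first-order remainders from the differentiated PDE and from $\partial_t F_K(u)$ must assemble into exactly the combination $\tfrac{p-2}{2e_u}|\nabla e_u|^2 - \tfrac{p-2}{e_u^2}\sum_k(\nabla u^k\cdot\nabla e_u)^2$, which is precisely what the chain rule correction annihilates; this is what motivates choosing $\cA$ as a scalar operator rather than the full principal symbol $a^{k,i}_{\alpha\beta}\partial_{\alpha\beta}$ of the system.
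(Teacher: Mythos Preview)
Your derivation of the exact identity is correct and matches the paper's Lemma~\ref{lem:Bochner-explicit} term for term: the chain-rule cancellation you highlight is precisely the mechanism the paper exploits, and your handling of the intermediate/far region (where $F(u) \ge r_0^2$ forces $K^2 \le e_u/r_0^2$) agrees with the paper's case analysis.

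There is, however, a genuine gap in the tube case. You claim that inside $B_{r_0}(\cN)$, where $F_K = K^2\dist^2(\cdot,\cN)$, the Hessian $\nabla^2 F_K(u)$ is positive semidefinite. This is false for a general compact $\cN$: writing $F = |G|^2$ with $G(p) = \pi_\cN(p) - p$, one has $\nabla^2 F = 2\nabla G^T\nabla G + 2\sum_k G^k \nabla^2 G^k$, and only the first summand is nonnegative. Already for $\cN = S^1 \subset \R^2$ the tangential eigenvalue of $\nabla^2 \dist^2$ is negative at points strictly inside the disk. So the term $-p\,e_u^{(p-2)/2}\sum_\alpha \nabla^2 F_K(u)(\partial_\alpha u,\partial_\alpha u)$ need not have a sign, and you cannot conclude that the right-hand side is $\le 0$ in the tube.

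The paper repairs exactly this point with Lemma~\ref{lem:Hessian-of-F}: from the decomposition above one gets the one-sided bound $-\nabla^2 F \le C(\cN)\,F^{1/2} I$, hence
\[
-\sum_\alpha \nabla^2 F_K(u)(\partial_\alpha u,\partial_\alpha u) \;\le\; C(\cN)\,K^2 F(u)^{1/2}\,|\nabla u|^2
\;\le\; \tfrac12 K^4 F(u) + C'(\cN)\,|\nabla u|^4
\]
by Young's inequality. The first piece is absorbed by (half of) your good term $-\tfrac p4 e_u^{(p-2)/2}|(\nabla F_K)(u)|^2 = -p\,e_u^{(p-2)/2} K^4 F(u)$, and the second produces the $e_u^{(p+2)/2}$ error on the right. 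With this correction the stronger in-tube estimate follows, retaining a definite fraction of both $|\nabla^2 u|^2$ and $K^4 F(u)$ on the left, exactly as stated.
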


This will be an immediate corollary of the explicit formula in Lemma \ref{lem:Bochner-explicit}, which follows by direct computation. We present it in a way that emphasizes \emph{why} we choose to work with the operator $\cA$ applied to the quantity $|\nabla u|_{\delta,K}^{p}$. After this, we explain how one can bound the only non-negative term $(\nabla^2 F_K)_u(\nabla u,\nabla u)$ appearing on the right-hand side. 

\begin{lemma}
\label{lem:Bochner-explicit}
Suppose that $u$ is as in Theorem~\ref{th:Bochner}. Then the following identity holds:
\begin{align}
\label{eq:L-Bochner-formula}
(\partial_t - \cA) e_u^{p/2} 
& = - p e_u^\frac{p-2}{2} (\nabla^2 F_K)_u(\nabla u,\nabla u) - \frac{p}{4} e_u^\frac{p-2}{2} |(\nabla F_K)_u|^2 \notag\\
& \pheq - p e_u^\frac{p-2}{2} |\nabla^2 u|^2 - \frac{p^2(p-2)}{4} e_u^{\frac{p}{2}-3} |\nabla u \cdot \nabla e_u|^2\,.
\end{align}
\end{lemma}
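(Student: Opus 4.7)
The plan is a direct computation via the chain rule. Set $L := \partial_t - \cA$.

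First, I rewrite the flow equation in a form adapted to $\cA$. Expanding the divergence in \eqref{eq:flow-delta-K} and using $\partial_\alpha e_u = 2\,\nabla u^i \cdot \partial_\alpha(\nabla u^i) + \partial_i F_K(u)\,\partial_\alpha u^i$, one checks by direct computation that
\begin{equation*}
\partial_t u^k - \Delta u^k = \tfrac{p-2}{2\,e_u}\,\nabla u^k \cdot \nabla e_u - \tfrac{1}{2}\,\partial_k F_K(u)\,.
\end{equation*}
The second-order piece on the right is exactly $\cA u^k - \Delta u^k$, which both explains the choice of $\cA$ and will be substituted repeatedly to eliminate time derivatives.

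Second, I compute $Le_u$ by first handling $(\partial_t - \Delta)e_u$ and then adding the extra $\cA$-correction. Splitting $e_u = |\nabla u|^2 + F_K(u) + \delta^2$ and using the classical Bochner-type identities
\begin{align*}
(\partial_t - \Delta)\,|\nabla u|^2 &= 2\,\nabla u^i \cdot \nabla(\partial_t u^i - \Delta u^i) - 2\,|\nabla^2 u|^2\,,\\
(\partial_t - \Delta)\, F_K(u) &= \partial_i F_K(u)\,(\partial_t u^i - \Delta u^i) - (\nabla^2 F_K)_u(\nabla u,\nabla u)\,,
\end{align*}
together with the flow identity from the first step, two structural cancellations occur: (i) the two copies of $\tfrac{p-2}{2e_u}\,\partial_i F_K(u)\,(\nabla u^i \cdot \nabla e_u)$ arising from the $|\nabla u|^2$ and $F_K(u)$ pieces cancel; and (ii) expanding $\nabla u^i \cdot \nabla\!\bigl((\nabla u^i \cdot \nabla e_u)/e_u\bigr)$ produces a second-derivative contribution $\tfrac{p-2}{e_u}\,\sum_i\nabla^2 e_u(\nabla u^i,\nabla u^i)$, which is \emph{precisely} the correction needed to pass from $\partial_t - \Delta$ to $L$. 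The outcome is
\begin{equation*}
L e_u = -2\,|\nabla^2 u|^2 - 2\,(\nabla^2 F_K)_u(\nabla u,\nabla u) - \tfrac{1}{2}\,|(\nabla F_K)_u|^2 + \tfrac{p-2}{2\,e_u}\,|\nabla e_u|^2 - (p-2)\,e_u^{-2}\,|\nabla u \cdot \nabla e_u|^2\,.
\end{equation*}

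Finally, I apply the scalar chain rule
\begin{equation*}
L(g(e_u)) = g'(e_u)\,L e_u - g''(e_u)\,\bigl[\,|\nabla e_u|^2 + (p-2)\,e_u^{-1}\,|\nabla u \cdot \nabla e_u|^2\,\bigr]
\end{equation*}
with $g(e) = e^{p/2}$, so $g'(e) = \tfrac{p}{2}\,e^{p/2-1}$ and $g''(e) = \tfrac{p(p-2)}{4}\,e^{p/2-2}$. The two $|\nabla e_u|^2$ contributions cancel exactly, while the two $|\nabla u \cdot \nabla e_u|^2$ contributions combine with coefficient $-\tfrac{p(p-2)}{4}\,e_u^{p/2-3}\,[\,2 + (p-2)\,] = -\tfrac{p^2(p-2)}{4}\,e_u^{p/2-3}$, yielding the stated identity. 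The main obstacle is purely bookkeeping -- carefully tracking index contractions, the $F_K$ cross-terms, and the third-derivative cancellations inside $\nabla u^i \cdot \nabla \Delta u^i$ versus $\Delta e_u$ -- rather than any genuine analytic difficulty.
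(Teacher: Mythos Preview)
Your proof is correct and follows essentially the same route as the paper: compute $(\partial_t-\Delta)e_u$ using the rewritten flow equation, recognize the $\nabla^2 e_u(\nabla u,\nabla u)$ term as the $\cA$-correction, and then pass to the power $e_u^{p/2}$ (you via an explicit chain-rule formula, the paper by noting that this power is exactly what absorbs the $|\nabla e_u|^2$ term). One small narrative quibble: in your point (i) the cross-terms $\tfrac{p-2}{2e_u}\,\partial_i F_K(u)(\nabla u^i\cdot\nabla e_u)$ and $\tfrac{p-2}{2e_u}\,\nabla|\nabla u|^2\cdot\nabla e_u$ do not \emph{cancel} but rather \emph{combine} into the $\tfrac{p-2}{2e_u}|\nabla e_u|^2$ term appearing in your formula for $Le_u$; your stated outcome is nonetheless correct.
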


\begin{proof}
Let us start with computing $(\pl-\Delta) e_u$, where $e_u = \delta^2+|\nabla u|^2+F_K(u)$; or even more precisely, with applying the heat operator to terms $|\nabla u|^2$ and $F_K(u)$. In both cases, one proceeds by replacing the term $\Delta u$ with $e_u^{\frac{2-p}{2}} \dv (e_u^{\frac{p-2}{2}} \nabla u) - e_u^{\frac{2-p}{2}} \nabla e_u^{\frac{p-2}{2}} \nabla u$, and leaving the derivatives of $e_u$ unevaluated: 
\begin{align}
(\partial_t - \Delta)|\nabla u|^2 &= 2\ps{\nabla u}{\nabla(\partial_t - \Delta)u} - 2|\nabla^2 u|^2 \notag\\
& = 2\ps{\nabla u}{\nabla\left[(\partial_t - e_u^\frac{2 - p}{2}\dv (e_u^\frac{p - 2}{2}\nabla))u 
+ e_u^{\frac{2-p}{2}} \nabla e_u^{\frac{p-2}{2}} \nabla u \right]} - 2|\nabla^2 u|^2 \notag\\
& = 2\ps{\nabla u}{\nabla\left[-\frac{1}{2}(\nabla F_K)_u + \frac{p - 2}{2}e_u^{-1}\nabla u \cdot \nabla e_u\right]} - 2|\nabla^2 u|^2 \notag\\
& = -(\nabla^2 F_K)_u(\nabla u,\nabla u) -(p - 2)e_u^{-2}|\nabla u\cdot \nabla e_u|^2 \notag\\
& \pheq + (p - 2)e_u^{-1}(\nabla^2u)(\nabla u,\nabla e_u)+(p - 2)e_u^{-1}(\nabla^2 e_u)(\nabla u,\nabla u)- 2|\nabla^2 u|^2\,, \\
(\partial_t - \Delta)F_K(u) 
&= (\nabla F_K)_u \cdot (\partial_t - \Delta)u - (\nabla^2 F_K)_u(\nabla u,\nabla u) \notag\\
&= (\nabla F_K)_u \cdot \left[ (\partial_t - e_u^\frac{2 - p}{2} \dv (e_u^\frac{p - 2}{2} \nabla))u + 
e_u^{\frac{2-p}{2}} \nabla e_u^{\frac{p-2}{2}} \nabla u \right]
- (\nabla^2 F_K)_u(\nabla u,\nabla u) \notag\\
&= -\frac{1}{2}|(\nabla F_K)_u|^2 +\frac{p - 2}{2}e_u^{-1}(\nabla F_K)_u\cdot \nabla u\cdot\nabla e_u - (\nabla^2 F_K)_u(\nabla u,\nabla u)\,.
\end{align}
Now, we combine these two expressions. Recognizing that the terms $(\nabla F_K)_u \nabla u$ and $2 \nabla^2 u \nabla u$ add up to $\nabla e_u$, we have 
\begin{align}
(\pl_t-\Delta) e_u 
& = -\frac 12 |(\nabla F_K)_u|^2 - (p-2)e_u^{-2}|\nabla u \cdot \nabla e_u|^2 - 2|\nabla^2 u|^2 \notag\\
& \pheq - 2 (\nabla^2 F_K)_u(\nabla u,\nabla u) + \frac{p-2}{2} e_u^{-1} |\nabla e_u|^2 + (p-2) e_u^{-1} \nabla^2 e_u (\nabla u,\nabla u)\,.
\end{align}
The first line above collects all nonpositive terms, so now we deal with the remaining terms one by one. First, $(p-2) e_u^{-1} \nabla^2 e_u (\nabla u,\nabla u)$ is the only second-order term (with respect to $e_u$) and so we shift it to the left-hand side, thus introducing the operator $\cA \psi := \Delta \psi + (p - 2) e_u^{-1} \nabla^2 \psi(\nabla u, \nabla u)$ instead of $\Delta$. 
Next, the term $\frac{p-2}{2} e_u^{-1} |\nabla e_u|^2$ is absorbed if we apply the operator $\pl_t-\cA$ to the right power of $e_u$. It is easy to see that one should work with $e_u^{p/2}$, resulting in 
\begin{align}
(\pl_t-\cA) e^{p/2}
= \frac{p}{2} e^{\frac{p-2}{2}} \Bigl( 
& -\frac 12 |(\nabla F_K)_u|^2 - \frac{p(p-2)}{2} e_u^{-2}|\nabla u \cdot \nabla e_u|^2 \notag\\
& - 2|\nabla^2 u|^2 - 2 (\nabla^2 F_K)_u(\nabla u,\nabla u) \Bigr)\,, 
\end{align}
as required. 
\end{proof}

Aiming at the estimates from Theorem \ref{th:Bochner}, we need to investigate the ambiguously signed term $(\nabla^2 F_K)_u(\nabla u,\nabla u)$. This is done using the following lemma. Note that it only gives an upper bound on $-\nabla^2 F_K$ and not on $|\nabla^2 F_K|$, which in general can be much larger. 

\begin{lemma}
\label{lem:Hessian-of-F}
In a tubular neighborhood in which $F(p) = \dist^2(p,\cN)$ is smooth, we have
\begin{equation}
-\nabla^2 F \le C(\cN) \cdot F^{1/2} \cdot I
\end{equation}
in the sense of quadratic forms. In other words, $-\nabla^2 F(p)(v,v) \le C(\cN) \cdot F(p)^{1/2} \cdot |v|^2$ for any $v \in \R^d$. 
\end{lemma}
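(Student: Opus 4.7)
The plan is to express $\nabla^2 F$ explicitly in terms of the derivative of the nearest-point projection $\pi_\cN$, check positivity at points of $\cN$, and Taylor-expand to control the error off of $\cN$.

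First, I would differentiate $F(p) = |p - \pi_\cN(p)|^2$. The chain rule naively produces a term involving $D\pi_\cN(p)$, but because $p - \pi_\cN(p) \in N_{\pi_\cN(p)}\cN$ while the image of $D\pi_\cN(p)$ lies in $T_{\pi_\cN(p)}\cN$, that term vanishes by orthogonality, leaving $\nabla F(p) = 2(p - \pi_\cN(p))$. A second differentiation then yields the clean formula
\begin{equation}
\nabla^2 F(p) = 2(I - D\pi_\cN(p))\,.
\end{equation}

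Next, I would evaluate at a point $q \in \cN$. Since $\pi_\cN$ restricts to the identity on $\cN$, $D\pi_\cN(q)$ acts as the identity on $T_q\cN$; and since $\pi_\cN$ is constant along short segments of normal rays (the definition of tubular coordinates), $D\pi_\cN(q)$ annihilates $N_q\cN$. Hence $D\pi_\cN(q) = P_{T_q\cN}$ and
\begin{equation}
\nabla^2 F(q) = 2\,P_{N_q\cN} \ge 0\,,
\end{equation}
so the claimed inequality holds trivially at points of $\cN$.

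Finally, for a general $p$ in the tubular neighborhood, set $q := \pi_\cN(p)$ so that $|p - q| = F(p)^{1/2}$. Because $F$ is smooth on the tubular neighborhood with $C^3$ bounds controlled by the geometry of $\cN$ and by $r_0$, Taylor's theorem applied to $\nabla^2 F$ between $q$ and $p$ gives
\begin{equation}
\nabla^2 F(p) \ge \nabla^2 F(q) - C(\cN)\,|p - q| \cdot I \ge -C(\cN)\,F(p)^{1/2} \cdot I\,,
\end{equation}
which rearranges to the stated bound. I do not expect any genuine obstacle here; the only step requiring care is the cancellation producing $\nabla F(p) = 2(p - \pi_\cN(p))$, which is immediate from the definition of $\pi_\cN$.
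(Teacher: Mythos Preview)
Your argument is correct. The key computations --- $\nabla F(p) = 2(p - \pi_\cN(p))$ via the orthogonality of $p - \pi_\cN(p)$ and $\operatorname{im} D\pi_\cN(p)$, hence $\nabla^2 F(p) = 2(I - D\pi_\cN(p))$, and the identification $\nabla^2 F(q) = 2P_{N_q\cN}$ on $\cN$ --- are all sound, and the Taylor step is justified by compactness of $\cN$ (hence of the closed tubular neighborhood) giving uniform $C^3$ bounds on $F$.

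The paper's route is slightly different in packaging. It writes $F = |G|^2$ with $G(p) = \pi_\cN(p) - p$ and uses the product-rule decomposition $\nabla^2 F = 2\,\nabla G^T \nabla G + 2\,G \cdot \nabla^2 G$; the first term is a Gram matrix and hence nonnegative \emph{at every point}, while the second is immediately bounded by $2|G|\,|\nabla^2 G| \le C(\cN)\,F^{1/2}$. This avoids the Taylor expansion altogether and only needs a $C^2$ bound on $\pi_\cN$ rather than a $C^3$ bound on $F$ (although these are the same thing, since $\nabla^3 F = -2\nabla^2 G$). Your approach, by contrast, gives the extra geometric information that $\nabla^2 F$ restricted to $\cN$ is exactly twice the normal projection. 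The two decompositions coincide on $\cN$ (where $G = 0$ and $\nabla G = -P_N$), so the difference is really one of presentation.
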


\begin{proof}
One can rewrite $F$ as $F(p) = |G(p)|^2$, where $G(p) = \pi(p) - p$ is the vector from the point $p$ to its nearest point on $\cN$. A direct calculation shows that
\begin{equation}
\nabla^2 F = 2 G \, \nabla^2 G + \nabla G^T \cdot \nabla G\,.
\end{equation}
Now, $\nabla G^T \cdot \nabla G$ is a positive definite matrix, $|\nabla^2 G|$ is bounded by a constant and $|G| = F^{1/2}$, therefore we arrive at the desired conclusion: 
\begin{equation}
-\nabla^2 F \le 2 |G| \cdot |\nabla^2 G| \cdot I 
\ale_{\cN} F^{1/2} \cdot I\,.
\end{equation}
\end{proof}

\begin{proof}[Proof of Theorem \ref{th:Bochner}]
We begin with the identity from Lemma \ref{lem:Bochner-explicit}, written in terms of $|\nabla u|_{\delta,K}^2 = e_u$, and discard the extra negative terms with no $K^2$-dependence: 
\begin{equation}
\label{eq:reduced-Bochner}
(\partial_t - \cA) |\nabla u|_{\delta,K}^{p}
+ c(p) |\nabla u|_{\delta,K}^{p-2}|\nabla^2 u|^2 
\le p |\nabla u|_{\delta,K}^{p-2} \left(-(\nabla^2 F_K)_u(\nabla u,\nabla u)-\frac{1}{4}|(\nabla F_K)_u|^2\right)\,.
\end{equation}
Depending on how far $u(t,x)$ is from $\cN$, there are three possibilities, the last one being the most important: 
\begin{enumerate}

\item If $u$ does not lie in the $2r_0$-neighborhood of $\cN$, then $F_K$ and its derivatives vanish, and \eqref{eq:reduced-Bochner} reduces to $(\partial_t - \cA) |\nabla u|_{\delta,K}^{p} \le 0$. 

\item If $u$ lies in the intermediate region $r_0 < \dist(u,\cN) < 2r_0$, then we bound the derivatives of $\dist(\cdot,\cN)$ by constants. Since $F$ has some definite size (between $r_0^2$ and $(2r_0)^2$), one can apply a~naive bound 
\begin{equation}
- \nabla^2 F_K (\nabla u,\nabla u) - \frac{1}{4}|\nabla F_K|^2 
\ale_\cN K^2 |\nabla u|^2 
\ale_{r_0} (\delta^2 + K^2 F_u + |\nabla u|^2)^2 
= |\nabla u|_{\delta,K}^4 \,.
\end{equation}
Together with \eqref{eq:reduced-Bochner}, this leads to $(\pl_t - \cA) |\nabla u|_{\delta,K}^{p} \ale_{\cN} |\nabla u|_{\delta,K}^{p+2}$.

\item If $u$ lies in the $r_0$-neighborhood of $\cN$, then $F_K(u) = K^2 \dist^2(u,\cN)$ and we can apply Lemma \ref{lem:Hessian-of-F} together with $|\nabla F_K| = 2 K^2 F^{1/2}$: 
\begin{equation}
-2(\nabla^2 F_K)_u(\nabla u,\nabla u)-\frac{1}{2}|(\nabla F_K)_u|^2 
\le C_1 K^2 F_u^{1/2} |\nabla u|^2 - 2 K^4 F_u 
\le C_2 |\nabla u|^4 - K^2 (F_K)_u\,.
\end{equation}
Finally,  
\begin{equation}
(\pl_t-\cA) |\nabla u|_{\delta,K}^p + c(p) |\nabla u|_{\delta,K}^{p-2} \left( |\nabla^2 u|^2 + K^2 (F_K)_u \right)
\ale_\cN |\nabla u|_{\delta,K}^{p+2}\,.
\end{equation}

\end{enumerate}
\end{proof}

\subsection{Parabolic Harnack inequality}

In this subsection, we recall a version of parabolic Harnack inequality proved by Ferretti and Safonov \cite{FerSaf01}, that is suitable for non-divergence type operators. 

\begin{theorem}
\label{th:Parabolic-Harnack}
As in the Bochner formula (Theorem \ref{th:Bochner}), let $\cA$ be the operator 
\begin{equation}
\cA \psi := \Delta \psi + (p - 2) |\nabla u|_{\delta,K}^{-2} \nabla^2 \psi(\nabla u, \nabla u)\,,
\end{equation}
where $u$ is a $C^{2,1}$-regular map. If $v$ is a nonnegative $C^{2,1}$ function satisfying 
\begin{equation}
(\pl_t - \cA) v \le 0 
\end{equation}
in some parabolic cylinder $P_r(z)$, then 
\begin{equation}
v(z) \le \frac{C(n,p)}{|P_r(z)|} \int_{P_r(z)} v \dd x \dd t\,. 
\end{equation}
\end{theorem}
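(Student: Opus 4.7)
The plan is to show that the operator $\pl_t - \cA$ falls within the scope of the Ferretti-Safonov non-divergence parabolic theory, at which point the mean value inequality follows directly from their local maximum principle for subsolutions. All of the real content of the proof is in verifying the structural hypotheses, namely uniform ellipticity and boundedness of the coefficients.

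First I would rewrite $\cA$ explicitly in non-divergence form as
\begin{equation*}
\cA \psi = \sum_{\alpha,\beta} a^{\alpha\beta}(t,x) \, \partial_\alpha \partial_\beta \psi, \qquad a^{\alpha\beta}(t,x) := \delta^{\alpha\beta} + (p-2) \, |\nabla u|_{\delta,K}^{-2} \sum_i \partial_\alpha u^i \, \partial_\beta u^i,
\end{equation*}
and note there are no lower-order terms. Since $u$ is $C^{2,1}$ and $|\nabla u|_{\delta,K} \geq \delta > 0$ on the relevant cylinder, the coefficients $a^{\alpha\beta}$ are bounded and continuous. To verify uniform ellipticity, I would observe that the matrix $M^{\alpha\beta} := |\nabla u|_{\delta,K}^{-2} \sum_i \partial_\alpha u^i \, \partial_\beta u^i$ is symmetric and positive semidefinite, with trace equal to $|\nabla u|^2/|\nabla u|_{\delta,K}^2 \in [0,1]$. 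All its eigenvalues therefore lie in $[0,1]$, and for $p \geq 2$ this places the eigenvalues of $a = I + (p-2)M$ in the interval $[1,p-1]$. Hence $\cA$ is uniformly elliptic with constants depending only on $p$, and critically \emph{independent} of $\delta$, $K$, and the particular map $u$.

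With these structural facts in hand, the Krylov-Safonov type local maximum principle established by Ferretti and Safonov in \cite{FerSaf01} applies to any nonnegative $C^{2,1}$ subsolution $v$ of $(\pl_t - \cA) v \leq 0$ on a backward parabolic cylinder and yields exactly the desired estimate $v(z) \leq C(n,p) |P_r(z)|^{-1} \int_{P_r(z)} v$. I do not expect any genuine obstacle here; the only bookkeeping is to match cylinder orientations and the placement of the reference point $z$ at the top of $P_r(z)$ with the conventions of \cite{FerSaf01} (possibly passing to a slightly enlarged cylinder and using that $v$ remains a nonnegative subsolution there). The uniformity of the constant $C(n,p)$ in $\delta$ and $K$ is automatic once the ellipticity verification above is recorded, which is precisely the feature of the estimate that will matter for all subsequent $\eps$-regularity and limiting arguments.
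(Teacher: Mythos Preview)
Your proposal is correct and matches the paper's own treatment: the paper simply remarks that the theorem is a special case of \cite[Thm.~3.4]{FerSaf01}, obtained by specializing to $v \ge 0$, taking the $L^1$-norm, and verifying the ellipticity condition for $\cA$. Your explicit eigenvalue computation (showing the spectrum of $a$ lies in $[1,p-1]$) is exactly the ellipticity check the paper alludes to, and your emphasis on the $\delta,K$-independence of the constant is precisely the point.
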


Indeed, this is a special case of \cite[Thm.~3.4]{FerSaf01}, obtained by specializing to the case $v \ge 0$, choosing the $L^1$-norm for the final estimate and verifying that our operator $\cA$ satisfies the ellipticity condition. In later applications, the parabolic Harnack inequality will always be used together with the Bochner formula. Hence, the following combination of these two theorems will be useful: 

\begin{corollary}
\label{cor:Bochner-Harnack-combined}
If $u$ is a smooth solution of the $(\delta,K)$-flow \eqref{eq:flow-delta-K} and $|\nabla u|_{\delta,K} \le A$ in a~parabolic cylinder $P_r(z)$, then 
\begin{equation}
|\nabla u|_{\delta,K}^p(z) \le \frac{C e^{A^2 C r^2}}{|P_r(z)|} \int_{P_r(z)} |\nabla u|_{\delta,K}^p \dd x \dd t
\end{equation}
with $C(n,\cN,p) > 0$. 
\end{corollary}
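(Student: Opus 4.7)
My plan is to combine the Bochner-type inequality of Theorem~\ref{th:Bochner} with the non-divergence Harnack inequality of Theorem~\ref{th:Parabolic-Harnack}. The natural candidate $v := |\nabla u|_{\delta,K}^p$ is not quite a subsolution of $(\pl_s - \cA) v \le 0$, because Bochner produces a right-hand side of order $|\nabla u|_{\delta,K}^{p+2}$. Under the hypothesis $|\nabla u|_{\delta,K} \le A$ on $P_r(z)$, however, this nonlinear zero-order term is controlled by $C_1 A^2 v$ for some $C_1 = C_1(n,\cN,p)$ extracted from Theorem~\ref{th:Bochner}, so the obstruction is linear and can be absorbed by a standard exponential shift.

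Concretely, writing $z = (t,x)$ and $\beta := C_1 A^2$, I will set $w(s,y) := e^{\beta (t-s)} v(s,y)$ on $P_r(z)$. Since $\cA$ differentiates only in space one has $\cA w = e^{\beta(t-s)} \cA v$ and $\pl_s w = -\beta w + e^{\beta(t-s)} \pl_s v$, so
\begin{equation}
(\pl_s - \cA) w = -\beta w + e^{\beta(t-s)}(\pl_s - \cA) v \le -\beta w + e^{\beta(t-s)} \beta v = 0.
\end{equation}
Thus $w$ is a nonnegative $C^{2,1}$ subsolution of the linear operator $\pl_s - \cA$. The uniform ellipticity of $\cA$, independent of $\delta$ and $K$, was already checked for the analogous principal symbol in Section~\ref{sec:G-L} and is verified for $\cA$ itself in the statement of Theorem~\ref{th:Parabolic-Harnack}, so the Harnack inequality applies directly to $w$.

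Applying Theorem~\ref{th:Parabolic-Harnack} to $w$ on $P_r(z)$ then gives
\begin{equation}
w(z) \le \frac{C(n,p)}{|P_r(z)|} \int_{P_r(z)} w \dd y \dd s.
\end{equation}
I then unwind the substitution: on the one hand $w(z) = v(z)$, and on the other hand for $(s,y) \in P_r(z)$ the inequality $0 < t-s < r^2$ yields $w(s,y) \le e^{\beta r^2} v(s,y) = e^{C_1 A^2 r^2} v(s,y)$. Substituting these into the display produces the desired estimate after collecting constants into a single $C = C(n,\cN,p)$.

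I do not expect any genuine obstacle in this argument; the corollary is essentially a mechanical combination of the two preceding theorems. The only point that requires any care is the sign in the exponential shift, which must be chosen so that the $-\beta w$ coming from $\pl_s w$ precisely cancels the growth term produced by Bochner, rather than reinforcing it; this dictates the weight $e^{\beta(t-s)}$ (decaying in backwards time) and in turn the factor $e^{A^2 C r^2}$ appearing in the final estimate.
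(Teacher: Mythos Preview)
Your proof is correct and is essentially identical to the paper's own argument: both apply the Bochner inequality to bound $(\pl_t - \cA)|\nabla u|_{\delta,K}^p \le C A^2 |\nabla u|_{\delta,K}^p$, absorb the zero-order term by the exponential-in-time weight $e^{CA^2(t_0 - s)}$, and then invoke Theorem~\ref{th:Parabolic-Harnack} on the resulting subsolution. The only difference is notational (your $v$ and $w$ are the paper's $w$ and $v$, respectively).
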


For future use, we will always use a constant $A(n,\cN,p) > 0$ and $r = 1$, so the constant on the right hand side is again just $C(n,\cN,p) > 0$. 

\begin{proof}
If we denote $w := |\nabla u|_{\delta,K}^p$, the Bochner formula (Theorem \ref{th:Bochner}) gives us 
\begin{equation}
(\pl_t-\cA) w
\le C |\nabla u|_{\delta,K}^{p+2} \le A^2 C w 
\quad \text{in } P_r(z)\,,
\end{equation}
which almost meets the assumptions of Theorem \ref{th:Parabolic-Harnack}. Thus, consider the auxiliary function 
\begin{equation}
v(t,x) := e^{-A^2 C (t-t_0)} w(t,x)\,,
\end{equation}
where we used $t_0$ to denote the time coordinate of $z$. Now, $v$ is a nonnegative regular function satisfying $(\pl_t-\cA) v \le 0$, so by Theorem \ref{th:Parabolic-Harnack} we have 
\begin{equation}
v(z) 
\le \frac{C}{|P_r(z)|} \int_{P_r(z)} v \dd x \dd t\,.
\end{equation}
Taking into account $w(z) = v(z)$ and $v(t,x) \le e^{A^2 C r^2} w(t,x)$ in $P_r(z)$, we infer a similar inequality for $w$: 
\begin{equation}
w(z) \le \frac{C e^{A^2 C r^2}}{|P_r(z)|} \int_{P_r(z)} w \dd x \dd t\,.
\end{equation}
\end{proof}

\subsection{The $\eps$-regularity theorem and related results}.

We now turn to $\eps$-regularity for the $(\delta,K)$-flow \eqref{eq:flow-delta-K}. 
For nonlinear equations arising in geometry, it is common that solutions can develop singularities, and thus one is forced to look for conditional regularity results that assume the energy is $\eps$-small. In the context of the $p = 2$ harmonic map flow such an $\eps$-regularity theorem was proved by Struwe \cite[Th.~5.1]{Str88} and Theorem \ref{th:eps-regularity} below is its strict analogue.

\begin{theorem}[$\eps$-regularity]
\label{th:eps-regularity}
There is a constant $\eps_0(n,\cN,p) > 0$ such that if $u \colon [-1,0] \times \R^n \to \R^d$ is a~smooth solution of the $(\delta,K)$-flow \eqref{eq:flow-delta-K} and $\Phi^{\delta,K}_{(0,0)}(1) \le \eps_0$, then 
\begin{equation}
|\nabla u|_{\delta,K} \le C 
\quad \text{in } P_\gamma
\end{equation}
for some positive constants $C,\gamma$ depending only on $n$, $p$, $\cN$ and $E_0 := \Phi_{(1,0)}^{\delta,K}(2) \approx_p H_2 |\nabla u_{-1}|^p_{\delta,K}(0)$. 
\end{theorem}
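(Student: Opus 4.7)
My plan is to adapt Struwe's weighted-supremum $\eps$-regularity scheme (originally for $p = 2$), using the Bochner--Harnack inequality of Corollary~\ref{cor:Bochner-Harnack-combined} in place of linear parabolic regularity and invoking the new monotonicity formula (Theorem~\ref{th:monotonicity-formula}). The strategy is a contradiction argument: assume the gradient is too concentrated at an interior maximum, rescale it to unit size, derive a lower bound on $\int_B |\nabla u|^p_{\delta,K}$ over a single time slice via Bochner--Harnack, and contradict this with an upper bound coming from monotonicity and the $\eps_0$-smallness of $\Phi^{\delta,K}_{(0,0)}(1)$.

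Fix a radius $r < 1$ (to be chosen depending on $E_0$) and work on $\overline{P_r}$. Let $\sigma(z)$ denote the parabolic distance from $z$ to the past-and-lateral boundary of $P_r$, and consider the scale-invariant function $f(z) := \sigma(z)^p \, |\nabla u|_{\delta,K}^p(z)$, which vanishes on the parabolic boundary and attains its maximum at some $z_1 = (t_1, x_1)$ with $\sigma_1 := \sigma(z_1) > 0$ and $M := |\nabla u|_{\delta,K}(z_1)$. The theorem reduces to the scale-invariant bound $\sigma_1 M \le 2$, which yields $|\nabla u|_{\delta, K} \le 4/r$ on $P_{r/2}$ and gives the conclusion with $\gamma = r/2$ and $C = 4/r$. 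Assume for contradiction $\sigma_1 M > 2$ and set $\rho := 1/M \le \sigma_1/2$. Then $P_\rho(z_1) \subset P_r$ with $|\nabla u|_{\delta, K} \le 2M$ by the maximality of $f$. Parabolic rescaling (Proposition~\ref{prop:para-scal}) yields $\tilde u(t, x) := u(t_1 + \rho^2 t, x_1 + \rho x)$, a solution to the $(\rho\delta, \rho K)$-flow with $|\nabla \tilde u|_{\rho\delta, \rho K}(0) = 1$ and bounded by $2$ on $P_1$. Applying Corollary~\ref{cor:Bochner-Harnack-combined} and unscaling yields $\int_{P_{\rho/2}(z_1)} |\nabla u|_{\delta,K}^p \ge c_1 \rho^{n+2-p}$; a pigeonhole in the time slicing then produces some $s^* \in (t_1 - \rho^2/4, t_1)$ with
\[
\int_{B_{\rho/2}(x_1)} |\nabla u_{s^*}|_{\delta,K}^p \dd y \;\ge\; c_2 \, \rho^{n-p}\,.
\]

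For the matching upper bound, the pointwise heat-kernel inequality $\rho_{\rho^2}(y - x_1) \ge c \rho^{-n}$ on $B_\rho(x_1)$ gives
\[
\int_{B_{\rho/2}(x_1)} |\nabla u_{s^*}|_{\delta,K}^p \dd y \;\le\; C \rho^{n-p} \, \Phi^{\delta,K}_{(s^* + \rho^2,\, x_1)}(\rho^2)\,,
\]
and the monotonicity formula lifts the scale from $\rho^2$ up to the maximal admissible value $s^* + \rho^2 + 1$, bounding the right-hand side by $\Phi^{\delta,K}_{(s^* + \rho^2,\, x_1)}(s^* + \rho^2 + 1)$. The main obstacle is the base-point change: the ratio $\rho_{t}(x_1 - y)/\rho_1(y)$ between the kernels appearing in this expression and in $\Phi^{\delta,K}_{(0,0)}(1)$ is unbounded as $|y| \to \infty$, so the $\eps_0$-smallness at the origin does not transfer uniformly. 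I plan to handle it via a near/far split at radius $R$: for $|y| \le R$ one dominates the kernel by $e^{C R r} \rho_1(y)$ and invokes $\Phi^{\delta,K}_{(0,0)}(1) \le \eps_0$, while for $|y| > R$ one dominates it by $C e^{-c R^2} \rho_2(y)$ and invokes the global bound $E_0 = \Phi^{\delta,K}_{(1,0)}(2)$. Choosing $R \sim \sqrt{\log(E_0/\eps_0)}$ and $r \le 1/R$ (both depending only on $n, p, \cN, E_0$) makes each contribution bounded by a multiple of $\eps_0$, so the upper bound reduces to $C \eps_0 \rho^{n-p}$. Combining with the lower bound gives $c_2 \le C \eps_0$, a contradiction once $\eps_0$ is fixed sufficiently small depending only on $n, p, \cN$.
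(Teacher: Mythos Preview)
Your proof is correct and follows essentially the same point-picking/rescaling/Bochner--Harnack/monotonicity scheme as the paper. The only notable difference is in the base-point comparison: the paper bypasses your near/far split (and the auxiliary parameter $R$) via the single pointwise estimate $|\rho_{1+t_1}(\cdot - x_0) - \rho_1(\cdot)| \lesssim_n \gamma\,\rho_2(\cdot)$, valid because $|x_0| \le \gamma$ and $|t_1| \le \gamma^2$, which immediately gives $\Phi^{\delta,K}_{z_1}(1+t_1) \lesssim \eps_0 + \gamma E_0$; it also skips the time-slice pigeonhole by bounding $\int_{P_1}|\nabla v|^p_{\delta',K'}$ directly through $\int_{-1}^0 \Phi^{\delta',K'}_{(1,0)}(1-t)\,dt \le \Phi^{\delta',K'}_{(1,0)}(2)$.
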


\begin{proof}
Fix $\gamma > 0$ to be chosen later. The function 
\begin{equation}
[\gamma/2,\gamma] \ni \rho \longmapsto (\gamma-\rho) \sup_{P_\rho} |\nabla u|_{\delta,K}
\end{equation}
is continuous and hence achieves its maximum at some point $\rho \in [\gamma/2,\gamma)$ (since $\rho = \gamma$ would imply that $|\nabla u|_{\delta,K} \equiv 0$ in $P_\gamma$). Let us pick such $\rho$, as well as $z_0 = (t_0,x_0) \in \overline{P_\rho}$ for which this maximum is achieved: 
\begin{equation}
|\nabla u|_{\delta,K}(z_0) = \sup_{P_\rho} |\nabla u|_{\delta,K} =: c_0\,. 
\end{equation}
There are two possibilities: 

\textsc{Case 1.} If $c_0 \le \frac{2}{\gamma-\rho}$, then 
\begin{equation}
(\gamma/2) \sup_{P_{\gamma/2}} |\nabla u|_{\delta,K}
\le (\gamma-\rho) \sup_{P_\rho} |\nabla u|_{\delta,K}
\le 2\,,
\end{equation}
which leads to the desired bound on $P_{\gamma/2}$, if only $\gamma$ depends only on $n,\cN,E_0$ (which we will see later). 

\textsc{Case 2.} If $c_0 \ge \frac{2}{\gamma-\rho}$, we will establish a contradiction for small $\eps_0$ and $\gamma$. To this end, start by noting that by maximality of $\rho$, 
\begin{equation}
(\gamma-\rho) \sup_{P_\rho} |\nabla u|_{\delta,K}
\ge \left( \gamma-\frac{\gamma+\rho}{2} \right) \sup_{P_{\frac{\gamma+\rho}{2}}} |\nabla u|_{\delta,K} 
= \frac{\gamma-\rho}{2} \sup_{P_{\frac{\gamma+\rho}{2}}} |\nabla u|_{\delta,K}\,.
\end{equation}
By cancelling $\gamma-\rho$, it follows that $|\nabla u|_{\delta,K} \le 2c_0$ on $P_{\frac{\gamma+\rho}{2}}$. 

Consider now the rescaled map $v(t,x) := u(t_0+\frac{t}{c_0^2}, x_0+\frac{x}{c_0})$, which solves the $(\delta',K')$-approximating equation with $\delta'=\delta/c_0$, $K'=K/c_0$. By definition, we have $|\nabla v|_{\delta',K'}(0,0) = 1$. Moreover, 
\begin{equation}
\sup_{P_1} |\nabla v|_{\delta',K'}
= c_0^{-1} \sup_{P_{c_0^{-1}}(z_0)} |\nabla u|_{\delta,K}
\le c_0^{-1} \sup_{P_{\frac{\gamma+\rho}{2}}} |\nabla u|_{\delta,K}
\le 2\,.
\end{equation}
The inclusion $P_{c_0^{-1}}(z_0) \subseteq P_{\frac{\gamma+\rho}{2}}$ above is guaranteed by the assumption $c_0 \ge \frac{2}{\gamma-\rho}$. 

In contrast, the assumption $\Phi^{\delta,K}_{(0,0)}(1) \le \eps_0$ together with the monotonicity formula implies that $|\nabla v|_{\delta',K'}$ is $L^p$-small in $P_1$. 
To see this, start by comparing the $L^p$-norm of $|\nabla v_t|_{\delta',K'}^p$ with the monotone quantity $\Phi_{(1,0)}^{\delta',K'}(1-t;v)$. Since $-1 \le t \le 0$ in $P_1$, the weights $1$ and $(1-t)^{p/2} \rho_{1-t}$ are comparable and hence 
\begin{equation}
\int_{P_1} |\nabla v_t|_{\delta',K'}^p \dd t \dd x 
\ale_{n,p} \int_{P_1} |\nabla v_t|_{\delta',K'}^p (1-t)^{p/2} \rho_{1-t} \dd t \dd x 
\le \int_{-1}^0 \Phi_{(1,0)}^{\delta',K'}(1-t;v) \dd t\,.
\end{equation}
Exploiting monotonicity and rescaling back, we then have 
\begin{equation}
\int_{P_1} |\nabla v_t|_{\delta',K'}^p \dd t \dd x 
\le \Phi_{(1,0)}^{\delta',K'}(2; v) 
= \Phi_{(t_0+c_0^{-2},x_0)}^{\delta,K}(2c_0^{-2}; u) 
\le \Phi_{(t_0+c_0^{-2},x_0)}^{\delta,K}(1+t_0+c_0^{-2}; u) \,.
\end{equation}
Now denote the point $z_1 = (t_1,x_1) := (t_0+c_0^{-2},x_0)$ and notice that it is close to $(0,0)$: 
\begin{equation}
|x_1| = |x_0| \le \gamma\,, \qquad 
|t_1| \le \max(-t_0,c_0^{-2}) \le \gamma^2\,.
\end{equation}
We are left with comparing the two following quantities: 
\begin{align}
\Phi_{z_1}^{\delta,K}(1+t_1; u) 
& = (1+t_1)^{p/2} \int_{\R^n} |\nabla u_{-1}|_{\delta,K}^p(x) \rho_{1+t_1}(x-x_0) \dd x \\
\text{and} \qquad 
\Phi_{(0,0)}^{\delta,K}(1; u)
& = \int_{\R^n} |\nabla u_{-1}|_{\delta,K}^p(x) \rho_{1}(x) \dd x\,.
\end{align}
The integrands are $\gamma$-close at unit scale, but have incomparable tails, so to control their difference, we need to use another Gaussian function with a heavier tail: 
\begin{equation}
\Phi_{z_1}^{\delta,K}(1+t_1) 
\ale_p \Phi_{(0,0)}^{\delta,K}(1)
+ \int_{\R^n} |\nabla u_{-1}|_{\delta,K}^p(x) |\rho_{1+t_1}(x-x_0) - \rho_{1}(x)| \dd x 
\ale_n \eps_0 + \gamma \Phi_{(1,0)}^{\delta,K}(2)\,.
\end{equation}

\medskip

Let us summarize the properties of $v$: 
\begin{itemize}
\item $v$ is a smooth solution of the $(\delta',K')$-approximating equation \eqref{eq:flow-delta-K};
\item $\displaystyle \int_{P_1} |\nabla v_t|_{\delta',K'}^p \dd t \dd x \ale_{n,p} \eps_0 + \gamma \cdot E_0$ (which can be taken arbitrarily small);
\item $\displaystyle |\nabla v|_{\delta',K'} \le 2$ in $P_1$;
\item $\displaystyle |\nabla v|_{\delta',K'} = 1$ at the point $(0,0)$. 
\end{itemize}
The contradiction now follows from a combination of the Bochner formula and the parabolic Harnack inequality (Corollary \ref{cor:Bochner-Harnack-combined}). Since $|\nabla v|_{\delta',K'} \le 2$, it gives us 
\begin{equation}
|\nabla v|_{\delta',K'}^p(0,0) \ale_{n,\cN,p} \int_{P_1} |\nabla v|_{\delta',K'}^p \dd t \dd x 
\ale_{n,p} \eps_0 + \gamma \cdot E_0\,.
\end{equation}
Taking appropriate small $\eps_0(n,\cN,p) > 0$ and $\gamma(n, \cN, p, E_0) > 0$, we infer that $|\nabla v|_{\delta',K'}^p(0,0) < 1$, yielding a~contradiction with $|\nabla v|_{\delta',K'}(0,0) = 1$. 
\end{proof}

This result can be paired with the energy inequality to obtain $K$-independent $C^{1,1/2}$ estimates, which will later be necessary in obtaining solutions of the $\delta$-flow \eqref{eq:reg-flow} in the limit $K \to \infty$. In brief, space-wise Lipschitz bounds $|\nabla u| \le M$ imply spacetime Lipschitz bounds (with respect to the parabolic metric), as detailed in the following lemma. 

\begin{lemma}
\label{lem:grad-to-Lip}
If for $\gamma \in (0,1]$ and a solution $u: P_\gamma \to \R^d$ to the $(\delta,K)$-flow \eqref{eq:flow-delta-K} we assume that
\begin{equation}
\sup_{P_\gamma}|\nabla u|_{\delta,K} \leq M\,,
\end{equation}
then one has $[u]_{\Lip(P_{\gamma/4})} \leq C(n, M, p, \delta)$. More specifically, the parabolic Lipschitz seminorm is controlled by
\begin{equation}
|u(t_1,x_1) - u(t_2,x_2)| \ale_{n, p} M|x_1 - x_2| + (M + \delta^{\frac{2 - p}{2}}M^{p/2})|t_1 - t_2|^{1/2}\qquad \text{for }(t_1,x_1),(t_2,x_2) \in P_{\gamma/4}\,.
\end{equation}
\end{lemma}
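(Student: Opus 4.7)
The spatial part is free: since $|\nabla u|\le |\nabla u|_{\delta,K}\le M$ on $P_\gamma$ and segments in $B_{\gamma/4}$ stay in $B_{\gamma/4}\subset B_\gamma$, the fundamental theorem of calculus gives $|u(t,x_1)-u(t,x_2)|\le M|x_1-x_2|$. Applying this at $t=t_1$ together with the triangle inequality through the intermediate point $(t_1,x_2)\in P_{\gamma/4}$ reduces the task to the temporal estimate
\begin{equation*}
|u(t_1,x)-u(t_2,x)|\ale_{n,p} (M+\delta^{(2-p)/2}M^{p/2})|t_1-t_2|^{1/2}\qquad \text{for }(t_i,x)\in P_{\gamma/4}\,.
\end{equation*}
The plan is the standard mollification trick: compare $u(t_i,x)$ to spatial ball averages at a scale $r$ tuned to balance a spatial-Lipschitz error against an $L^2$-averaged time-derivative error extracted from Lemma~\ref{lem:loc-energy-ineq}.

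Assume WLOG $t_1>t_2$ and set $r:=|t_1-t_2|^{1/2}$, $z:=(t_1,x)$, so that $P_r(z)=(t_2,t_1)\times B_r(x)$. Since $(t_i,x)\in P_{\gamma/4}$ forces $r<\gamma/4$, a direct check yields $P_{2r}(z)\subseteq P_\gamma$. With the fixed $x$ suppressed, write $\ov{u}_r(t):=|B_r|^{-1}\int_{B_r(x)} u(t,y)\,dy$ and decompose
\begin{equation*}
u(t_1,x)-u(t_2,x)=\bigl[u(t_1,x)-\ov{u}_r(t_1)\bigr]+\bigl[\ov{u}_r(t_1)-\ov{u}_r(t_2)\bigr]+\bigl[\ov{u}_r(t_2)-u(t_2,x)\bigr].
\end{equation*}
The two outer brackets are each $\le Mr$ by the spatial bound on $B_r(x)$. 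For the middle bracket, I would differentiate under the integral and apply Cauchy--Schwarz successively in $L^2(B_r(x))$ and in $L^2_s$ to get
\begin{equation*}
|\ov{u}_r(t_1)-\ov{u}_r(t_2)|\le |t_1-t_2|^{1/2}\,|B_r|^{-1/2}\,\|\pl_s u\|_{L^2(P_r(z))}\,.
\end{equation*}

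What remains is a $K$-independent bound on the last norm, which is exactly what the rescaled version of Lemma~\ref{lem:loc-energy-ineq} supplies. Rerunning the cutoff argument there with $\chi$ satisfying $\mathbf{1}_{P_r(z)}\le \chi\le \mathbf{1}_{P_{2r}(z)}$ and $|\nabla\chi|^2+|\pl_t\chi|\ale r^{-2}$ yields
\begin{equation*}
\int_{P_r(z)} |\nabla u|_{\delta,K}^{p-2}|\pl_s u|^2 \ale_{n,p} r^{-2}\int_{P_{2r}(z)} |\nabla u|_{\delta,K}^p \ale_{n,p} M^p r^n\,,
\end{equation*}
using $|\nabla u|_{\delta,K}\le M$ and $|P_{2r}(z)|\ale r^{n+2}$. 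Since $p\ge 2$ and $|\nabla u|_{\delta,K}\ge \delta$, dividing by the lower bound on the weight gives $\|\pl_s u\|_{L^2(P_r(z))}\ale_{n,p} \delta^{(2-p)/2}M^{p/2}r^{n/2}$. Plugging this back, the powers of $r$ and $|B_r|$ cancel, and the middle bracket is $\ale_{n,p} \delta^{(2-p)/2}M^{p/2}|t_1-t_2|^{1/2}$; adding the outer-bracket contribution $2Mr=2M|t_1-t_2|^{1/2}$ gives the claim. I do not foresee a serious obstacle: the only real tuning is the choice $r=|t_1-t_2|^{1/2}$, and the $K$-independence of the constant is automatic, as the weighted local energy inequality only sees $K$ through the (assumed) boundedness of $|\nabla u|_{\delta,K}$.
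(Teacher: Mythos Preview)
Your proof is correct and follows essentially the same route as the paper's: compare to spatial ball averages at scale $r=|t_1-t_2|^{1/2}$, bound the outer terms by the spatial Lipschitz estimate, and control the averaged time-difference via the (rescaled) local energy inequality together with the crude weight bound $|\nabla u|_{\delta,K}\ge\delta$. The only cosmetic difference is that the paper centers the averaging ball at the midpoint $x_0=(x_1+x_2)/2$ and treats both coordinates at once, whereas you first strip off the spatial increment and reduce to the purely temporal case; this changes nothing of substance.
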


\begin{proof}
We shall compare $u(t_1,x_1)$, $u(t_2,x_2)$ by comparing the averages $(u_{t_1})_{B_r(x_0)}$, $(u_{t_2})_{B_r(x_0)}$, where we choose 
\begin{equation}
r := |t_1-t_2|^{1/2}\,, \qquad
x_0 := \frac{x_1+x_2}{2}\,, \qquad 
t_0 := \max(t_1,t_2)\,.
\end{equation}
By triangle inequality, 
\begin{equation}
|u(t_1,x_1) - u(t_2,x_2)| 
\le |u(t_1,x_1)-(u_{t_1})_{B_r(x_0)}| + |u(t_2,x_2)-(u_{t_2})_{B_r(x_0)}|
+ |(u_{t_1})_{B_r(x_0)}-(u_{t_2})_{B_r(x_0)}|\,.
\end{equation}
The first two terms are bounded directly using the assumption $|\nabla u| \le M$ (even if $x_1,x_2$ lie outside $B_r(x_0)$). The last term can be bounded by 
\begin{align}
|(u_{t_1})_{B_r(x_0)}-(u_{t_2})_{B_r(x_0)}| 
& \le \fint_{B_r(x_0)} |u_{t_1}-u_{t_2}| \dd x \notag\\
& \le \fint_{B_r(x_0)} \int_{t_0-r^2}^{t_0} |\pl_t u| \dd t \dd x \notag\\
& \ale r \left( r^{-n} \int_{P_r(t_0,x_0)} |\pl_t u|^2 \dd t \dd x \right)^{1/2}\,,
\end{align}
so that it is enough to estimate the last integral in terms of $M$. This is possible if we apply the naive bound $|\nabla u|_{\delta,K} \ge \delta$; a rescaled version of Lemma \ref{lem:loc-energy-ineq} then implies: 
\begin{equation}
r^{-n} \int_{P_r(t_0,x_0)} |\pl_t u|^2 
\le \delta^{2-p} r^{-n} \int_{P_r(t_0,x_0)} |\nabla u|^{p-2}_{\delta,K} |\pl_t u|^2 
\ale_{p,n} \delta^{2-p} r^{-n-2} \int_{P_{2r}(t_0,x_0)} |\nabla u|^{p}_{\delta,K}
\ale_n \delta^{2-p} M^p\,.
\end{equation}
Summing up the contributions of all terms, we arrive at the desired estimate.
\end{proof}

In the context of $\eps$-regularity regions, this implies that

\begin{corollary}
\label{cor:eps-reg-Lip}
Under the hypotheses of Theorem \ref{th:eps-regularity}, one has $[u]_{\Lip(P_{\gamma/4})} \leq C(n,\cN, E_0, \delta)$. More specifically, the parabolic Lipschitz seminorm is controlled by
\begin{equation}
|u(t,x) - u(s,y)| \ale_{n,\cN,E_0} |x - y| + \delta^{\frac{2 - p}{2}}|t - s|^{1/2}\qquad \text{for }(t,x),(s,y) \in P_{\gamma/4}\,.
\end{equation}
\end{corollary}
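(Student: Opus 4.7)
The proof is a direct combination of Theorem \ref{th:eps-regularity} with Lemma \ref{lem:grad-to-Lip}, so the plan has just two steps.

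First, I would apply Theorem \ref{th:eps-regularity} under the stated hypotheses to obtain a uniform bound $|\nabla u|_{\delta,K} \le M$ throughout $P_\gamma$, where both $M$ and $\gamma$ depend only on $n$, $\cN$, $p$, and $E_0$. Since $p \ge 2$ is fixed once and for all in this paper, its dependence can be absorbed into the general constants, giving $M = M(n, \cN, E_0)$ and $\gamma = \gamma(n, \cN, E_0)$.

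Second, I would apply Lemma \ref{lem:grad-to-Lip} with this value of $M$ to the restriction of $u$ to $P_\gamma$, producing the parabolic Lipschitz estimate on $P_{\gamma/4}$:
\begin{equation*}
|u(t,x) - u(s,y)| \ale_{n,p} M|x - y| + \left(M + \delta^{\frac{2-p}{2}} M^{p/2}\right) |t-s|^{1/2}.
\end{equation*}
The coefficient $M$ of the spatial term is already of the form $C(n,\cN,E_0)$, matching the claimed bound. For the coefficient of $|t-s|^{1/2}$, when $p = 2$ one has $\delta^{(2-p)/2} = 1$ and the estimate is uniform in $\delta$; when $p > 2$, in the regime $\delta \le 1$ of interest one has $\delta^{(2-p)/2} \ge 1$, so $M + \delta^{(2-p)/2} M^{p/2} \ale_{n,\cN,E_0} \delta^{(2-p)/2}$. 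This establishes the explicit displayed inequality, and the qualitative seminorm bound $[u]_{\Lip(P_{\gamma/4})} \le C(n,\cN,E_0,\delta)$ follows immediately.

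No substantive obstacle arises here: this corollary is essentially a packaging statement combining $\eps$-regularity with the general spatial-gradient-to-parabolic-Lipschitz conversion supplied by Lemma \ref{lem:grad-to-Lip}. The only mildly delicate point is the constant bookkeeping needed to absorb the additive $M$ into a multiple of $\delta^{(2-p)/2}$ in the time coefficient, which uses the standing assumption of small $\delta$.
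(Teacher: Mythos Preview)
Your proposal is correct and matches the paper's approach exactly: the corollary is stated immediately after Lemma~\ref{lem:grad-to-Lip} as a direct consequence of combining that lemma with the gradient bound from Theorem~\ref{th:eps-regularity}, with no separate proof given. Your observation about needing $\delta \le 1$ to absorb the additive $M$ into the $\delta^{(2-p)/2}$ coefficient is a valid piece of bookkeeping that the paper leaves implicit.
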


\begin{remark}
The $[\cdot]_\Lip$ seminorm estimate in the conclusion of Corollary \ref{cor:eps-reg-Lip} degenerates as $\delta \to 0$. A~natural improvement of this lemma would be to remove the $\delta$-dependence of the $[\cdot]_\Lip$ estimate. Let us note however that: 1) this is beyond the scope of this paper, where $\delta > 0$ is fixed throughout, and 2) methods involving the maximum principle, such as are used in \cite[Lem.~4.3]{JinSil17} to prove an analogous result in the scalar-valued ($\R = \cN = \R^d$) case, would require a new idea given that the $(\delta,K)$-flow is a system of $d$ equations coupled to highest order.
\end{remark}

The $\eps$-regularity theorem above shows that $u$ is regular on a parabolic cylinder centered at $(0,0)$, but it does not imply directly that it is regular on a neighborhood of $(0,0)$. The following lemma fills this gap by showing that a pointwise bound $|\nabla u|_{\delta,K} \le C$ is preserved for a small but uniformly positive time. 

\begin{lemma}
\label{lem:eps-reg-for-later-time}
There is a constant $\gamma(n,\cN,p) > 0$ such that if $u$ is a regular solution of the $(\delta,K)$-flow \eqref{eq:flow-delta-K} and $|\nabla u_0|_{\delta,K} \le \gamma$ on the ball $B_2$, then 
\begin{equation}
|\nabla u|_{\delta,K} \le 4 
\quad \text{on } [0,r_0] \times B_1
\end{equation}
for some $r_0(n,\cN,p,E_0) > 0$, where $E_0 := H_1 |\nabla u|^p_{\delta,K}(0)$. 
\end{lemma}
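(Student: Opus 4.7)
The plan is to apply Theorem \ref{th:eps-regularity} at parabolic scale $\rho = \sqrt{t_0}$ around each point $(t_0, x_0) \in (0, r_0] \times B_1$, and then to upgrade the crude $O(1/\sqrt{t_0})$ gradient bound it produces into an $O(\gamma)$ bound using the Bochner--Harnack inequality of Corollary \ref{cor:Bochner-Harnack-combined}. The boundary case $t_0 = 0$ is immediate from $|\nabla u_0|_{\delta,K} \le \gamma \le 4$.

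First I would control the monotone quantity $\Phi^{\delta,K}_{(t_0,x_0)}(t_0;u) = t_0^{p/2} H_{t_0}|\nabla u_0|^p_{\delta,K}(x_0)$. Splitting the defining heat-kernel integral over $B_1(x_0) \subset B_2$ (where $|\nabla u_0|_{\delta,K}\le \gamma$ by hypothesis) and its complement, and comparing $\rho_{t_0}(x_0-y)$ to $\rho_1(y)$ on the complement via an elementary Gaussian calculation (completing the square in $y$ for $t_0 \le 1/8$, $|x_0|\le 1$, $|x_0-y|\ge 1$), one obtains
\[
\Phi^{\delta,K}_{(t_0, x_0)}(t_0; u) \le \gamma^p\,t_0^{p/2} + C_n\, E_0 \cdot t_0^{(p-n)/2} e^{-c_n/t_0}\,,
\]
and identically for $\Phi^{\delta,K}_{(2t_0, x_0)}(2t_0; u)$. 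For $\gamma$ small (depending on $n,\cN,p$) and $r_0$ small (depending on $n,\cN,p,E_0$), both quantities lie below $\eps_0$ (the constant from Theorem \ref{th:eps-regularity}) and below $1$.

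Parabolically rescaling, $v(s,y) := u(t_0(1+s), x_0 + \sqrt{t_0}\,y)$ solves the $(\sqrt{t_0}\,\delta, \sqrt{t_0}\,K)$-flow on $[-1,\infty)\times\R^n$ by Proposition \ref{prop:para-scal}, and the scaling invariance of $\Phi$ transfers the estimates of Step 1 into $\Phi^{\sqrt{t_0}\delta,\sqrt{t_0}K}_{(0,0)}(1;v)\le \eps_0$ and $E_0^v := \Phi^{\sqrt{t_0}\delta,\sqrt{t_0}K}_{(1,0)}(2;v)\le 1$. Theorem \ref{th:eps-regularity} then gives $|\nabla v|_{\sqrt{t_0}\delta,\sqrt{t_0}K} \le C_1$ on $P_{\gamma_1}$ for constants $C_1 > 0$ and $\gamma_1 \in (0,1]$ depending only on $n,\cN,p$ (via the uniform bound $E_0^v\le 1$). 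Combining with Corollary \ref{cor:Bochner-Harnack-combined} on $P_{\gamma_1}$ and with the heat-kernel-weighted bound $\int_{P_1} |\nabla v|^p \ale_{n,p} \Phi^{\sqrt{t_0}\delta,\sqrt{t_0}K}_{(1,0)}(2;v) = E_0^v$ (the inequality used inside the proof of Theorem \ref{th:eps-regularity}), one obtains
\[
|\nabla v|^p_{\sqrt{t_0}\delta,\sqrt{t_0}K}(0,0) \le C(n,\cN,p)\cdot E_0^v\,.
\]

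Since $|\nabla v|_{\sqrt{t_0}\delta, \sqrt{t_0}K}(0,0) = \sqrt{t_0}\,|\nabla u|_{\delta,K}(t_0,x_0)$, inserting the bound on $E_0^v$ from Step 1 yields
\[
|\nabla u|^p_{\delta,K}(t_0,x_0) \le C(n,\cN,p)\bigl(\gamma^p + E_0 \cdot t_0^{-n/2} e^{-c_n/t_0}\bigr)\,.
\]
Choosing first $\gamma = \gamma(n,\cN,p)$ so that $C\gamma^p \le \tfrac12\cdot 4^p$, and then $r_0 = r_0(n,\cN,p,E_0)$ so that the second term is also below $\tfrac12\cdot 4^p$ for $t_0\le r_0$, completes the argument. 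The main obstacle is that Theorem \ref{th:eps-regularity} applied alone at scale $\sqrt{t_0}$ only produces $|\nabla u|_{\delta,K}\le C_1/\sqrt{t_0}$, which degenerates as $t_0 \to 0$; this is bypassed by observing that the \emph{integral} $\int_{P_1}|\nabla v|^p$ is in fact of size $E_0^v = O(t_0^{p/2})$, and feeding this into Bochner--Harnack recovers a pointwise bound uniform in $t_0$.
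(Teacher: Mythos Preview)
Your proof is correct and takes a genuinely different route from the paper's own argument. The paper re-runs the point-picking (concentric-cylinder) argument from the proof of Theorem~\ref{th:eps-regularity} from scratch, adapted to the forward-in-time region: one maximizes $(r-\rho)\sup_{P_\rho(q)}|\nabla u|_{\delta,K}$ over $\rho\in[r/2,r]$ for $q=(r^2,x)$, and in the bad case reaches a contradiction by rescaling at the point of concentration and applying Bochner--Harnack together with the monotonicity formula and the auxiliary lemma on $H_{r^2}|\nabla u_0|^p_{\delta,K}$. Your approach is more modular: you treat Theorem~\ref{th:eps-regularity} as a black box applied at scale $\sqrt{t_0}$, observe that the crude output $|\nabla u|_{\delta,K}\lesssim 1/\sqrt{t_0}$ is not good enough, and then feed the \emph{integral} estimate $\int_{P_1}|\nabla v|^p \lesssim E_0^v = O(t_0^{p/2})$ back into Corollary~\ref{cor:Bochner-Harnack-combined} on the $\eps$-regularity region $P_{\gamma_1}$ to recover a bound uniform in $t_0$. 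The advantage of your route is that it avoids reproving the point-picking mechanism and makes transparent \emph{why} the bound is $O(\gamma)$ rather than $O(1/\sqrt{t_0})$; the paper's route is slightly more self-contained and does not need to invoke $\eps$-regularity and Bochner--Harnack in two separate passes. Both rely on the same Gaussian tail estimate for $H_{t_0}|\nabla u_0|^p_{\delta,K}(x_0)$ (your Step~1, their final auxiliary lemma).
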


A common assumption in the applications is of the form $|\nabla u_0|_{\delta,K} \le B$ (where $B$ is not necessarily small). Rescaling (by a factor of $B/\gamma$), one can apply the lemma above and (after rescaling again) conclude that $|\nabla u|_{\delta,K} \le C(n,\cN) B$ on a small region. The size of that region depends on $B$, but this is not an issue in practice. 

\begin{proof}
We shall adapt the proof of Theorem \ref{th:eps-regularity}. Fix $r_0 > 0$ to be chosen later, depending on $n$, $\cN$ and $E_0$. Consider a point $q := (r^2,x)$ in the required region, with $0 < r \le r_0$ and $x \in B_1$. As before, we choose $\rho \in [r/2,r]$ maximizing the function 
\begin{equation}
[r/2,r] \ni \rho \longmapsto (r-\rho) \sup_{P_\rho(q)} |\nabla u|_{\delta,K}\,,
\end{equation}
and then a point $z_0 = (t_0,x_0) \in \ov{P_\rho(q)}$ for which this supremum is achieved: 
\begin{equation}
|\nabla u|_{\delta,K}(z_0) = \sup_{P_\rho(q)} |\nabla u|_{\delta,K} =: c_0 \cdot r\,. 
\end{equation}
The rest of the proof follows the reasoning from Theorem \ref{th:eps-regularity}, but we rescale by the factor of $c_0$ rather than $c_0 \cdot r$ (which is the bound on $|\nabla u|_{\delta,K}$ in $P_\rho(q)$). 

\medskip

If $c_0 \le \frac{2}{r-\rho}$, then 
\begin{equation}
(r/2) \sup_{P_{r/2}(q)} |\nabla u|_{\delta,K} 
\le (r-\rho) \sup_{P_\rho(q)} |\nabla u|_{\delta,K} 
\le 2r\,,
\end{equation}
and in particular $|\nabla u|_{\delta,K}(q) \le 4$, as required.

\medskip

The remaining case leads to a contradiction. As before, we consider the rescaled map $v(t,x) = u(t_0+\frac{t}{c_0^2}, x_0+\frac{x}{c_0})$ (which solves the $(\delta',K')$-approximating equation with $\delta' = \delta/c_0$, $K' = K/c_0$) and use Corollary \ref{cor:Bochner-Harnack-combined} to establish the chain of inequalities 
\begin{equation}
r^p = |\nabla v|_{\delta',K'}^p(0,0) 
\ale_{n,\cN,p} \int_{P_1} |\nabla v|_{\delta',K'}^p \dd t \dd x 
\ale_{n,p} \int_{-1}^0 \Phi_{(1,0)}^{\delta',K'}(1-t;v) \dd t\,. 
\end{equation}
Then, monotonicity formula implies that 
\begin{equation}
r^p 
\ale_{n,\cN,p} \Phi_{(1,0)}^{\delta',K'}(2; v) 
= \Phi_{z_1}^{\delta,K}(2c_0^{-2}; u) 
\le \Phi_{z_1}^{\delta,K}(t_1; u) 
= t_1^{p/2} \cdot H_{t_1} |\nabla u_0|_{\delta,K}^p(x_0)\,.
\end{equation}
where again $z_1 = (t_1,x_0) = z_0 + (c_0^{-2},0)$. Note that $t_1 \le 2r^2$, so the above can be summarized as 
\begin{equation}
r^p \le B(n,\cN,p) \cdot r^p H_{t_1} |\nabla u_0|_{\delta,K}^p(x_0)\,.
\end{equation}
To arrive at a contradiction, it is enough to take $\gamma^p := \frac{1}{4B}$ and choose $r_0 > 0$ small enough, so that our assumption $|\nabla u_0|_{\delta,K} \le \gamma$ implies $H_{t_1} |\nabla u_0|_{\delta,K}^p(x_0) \le 2 \gamma^p$. Therefore, the proof is completed by an application of the simple lemma below. 
\end{proof}

\begin{lemma}
Assume that the map $u_0$ satisfies $|\nabla u_0|_{\delta,K} \le \gamma$ in the ball $B_2$, and let $E_0 := H_1 |\nabla u_0|^p_{\delta,K}(0)$. Then there is a radius $r_0(n,E_0,\gamma,p) > 0$ for which 
\begin{equation}
H_{r^2} |\nabla u_0|_{\delta,K}^p(x) \le 2 \gamma^p
\quad \text{for } x \in B_1, \ 0 < r \le r_0\,.
\end{equation}
\end{lemma}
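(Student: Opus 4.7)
The plan is to split the heat integral $H_{r^2} f(x) = \int_{\R^n} \rho_{r^2}(x-y) f(y) \dd y$, with $f := |\nabla u_0|_{\delta,K}^p$, according to whether the integration variable $y$ lies in $B_2$ or outside. The ball $B_2$ is chosen to match the region on which the pointwise hypothesis $|\nabla u_0|_{\delta,K} \le \gamma$ is available; the far region will be handled by comparing the heat kernel at small scale $r$ to the heat kernel at the unit scale, in which we have the $L^1$ control provided by $E_0$.

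For the near part, we use the pointwise bound $f \le \gamma^p$ on $B_2$ together with $\int \rho_{r^2}(x-y) \dd y \le 1$ to obtain
\begin{equation}
\int_{B_2} \rho_{r^2}(x-y) f(y) \dd y \le \gamma^p\,.
\end{equation}
For the far part, since $x \in B_1$ and $|y| \ge 2$ we have $|x-y| \ge |y|/2$, so a straightforward comparison of Gaussian factors gives
\begin{equation}
\rho_{r^2}(x-y) \le r^{-n}\, e^{|y|^2/4 - |x-y|^2/(4r^2)} \rho_1(y) \le r^{-n}\, e^{1 - 1/(4r^2)} \rho_1(y)
\end{equation}
valid for $r \le 1/2$, where the last inequality uses that the exponent $|y|^2(1/4 - 1/(16r^2))$ is nonpositive and is hence maximized (over $|y|\ge 2$) at $|y|=2$. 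Integrating against $f$ and invoking $\int f \rho_1 \le E_0$ (since the heat kernel is evaluated at the origin, which is in $B_1$, one may need to similarly compare $\rho_1(y)$ to $\rho_1(-y) = \rho_1(y)$, which is trivial) yields
\begin{equation}
\int_{\R^n \setm B_2} \rho_{r^2}(x-y) f(y) \dd y \le r^{-n}\, e^{1 - 1/(4r^2)} \cdot E_0\,.
\end{equation}

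The main point is now that the factor $r^{-n} e^{-1/(4r^2)}$ vanishes as $r \to 0$, so by choosing $r_0 = r_0(n,E_0,\gamma,p) > 0$ small enough, this tail contribution is at most $\gamma^p$ whenever $0 < r \le r_0$. Summing the two pieces gives $H_{r^2} f(x) \le 2\gamma^p$, as required. There is no real obstacle here: the argument is a clean Gaussian tail estimate, and the key feature being exploited is that the heat kernel is negligible at scale $r \ll 1$ outside of a neighborhood of $x$, while inside that neighborhood the integrand is pointwise small by hypothesis.
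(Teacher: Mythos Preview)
Your proof is correct and follows essentially the same approach as the paper: both split the heat integral at $B_2$, use the pointwise bound $f \le \gamma^p$ on the near part, and on the far part compare $\rho_{r^2}(x-y)$ to $\rho_1(y)$ via the inequality $|x-y| \ge |y|/2$ (valid for $x \in B_1$, $|y| \ge 2$) to obtain the same tail factor $r^{-n} e^{1 - 1/(4r^2)} \cdot E_0$. The parenthetical remark about comparing $\rho_1(y)$ to $\rho_1(-y)$ is unnecessary, since $E_0 = \int_{\R^n} f(y)\rho_1(y)\,dy$ exactly by definition.
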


\begin{proof}
Intuitively, $H_{r^2} |\nabla u_0|_{\delta,K}^p(x)$ is comparable to the average $\fint_{B_r(x)} |\nabla u_0|_{\delta,K}^p$, which is at most $\gamma^p$. Thus, one only needs to handle the Gaussian tail: 
\begin{align}
H_{r^2} |\nabla u_0|_{\delta,K}^p(x)
& = \int_{B_2} |\nabla u(y)|_{\delta,K}^p \rho_{r^2}(x-y) \dd y 
+ \int_{B_2^c} |\nabla u(y)|_{\delta,K}^p \rho_{r^2}(x-y) \dd y \notag\\
& \le \gamma^p \int_{B_2} \rho_{r^2}(x-y) \dd y 
+ \sup_{x \in B_1, \, y \in B_2^c} \frac{\rho_{r^2}(x-y)}{\rho_1(y)} 
\cdot \int_{B_2^c} |\nabla u(y)|_{\delta,K}^p \rho_1(y) \dd y \notag\\
& \le \gamma^p +  r^{-n} e^{-\frac{1}{4r^2}+1} \cdot E_0\,.
\end{align}
The last inequality easily follows once we note that $|x-y| \ge \frac 12 |y|$. In result, the $r$-dependent term tends to zero as $r \to 0$, and one can assume $r \le r_0(n,\gamma,E_0,p)$ so that both terms together give at most $2\gamma^p$. 
\end{proof}

\subsection{Hessian bounds in $\eps$-regularity regions}

In this subsection, we obtain bounds on the signed second order terms appearing in the Bochner formula, assuming the hypotheses and conclusion of Theorem \ref{th:eps-regularity}. For $p = 2$, the divergence form of the second-order part of the operator makes this a simple matter of integration by parts, but that is not the case for $p > 2$. However, the following lemmas show that an integration by parts can be performed up to some error that is not necessarily of lower order, but that can be absorbed by other (signed) terms of the Bochner formula. The end result can be stated as follows, using the shorthand Notation \ref{not:e-u}, which we will prove after having built up the necessary tools via a sequence of lemmas:

\begin{theorem}
\label{th:eps-reg-W22}
Let $u$ be a smooth solution of \eqref{eq:flow-delta-K} in $P_2$, and assume that $e_u(x) \leq M^2$ in $P_2$. We then have access to weighted $W^{2,2}$ bounds
\begin{equation}
\label{eq:eps-reg-W22-eq}
\int_{P_{1/2}} (|(\nabla F_K)_u|^2 + |\nabla^2 u|^2)e_u^{\frac{p-2}{2}} \leq C(p,n)\left[\int_{P_2} e_u^{p/2} + \int_{P_2} e_u^\frac{p + 2}{2}\right] \leq C(p,n,M)\,.
\end{equation}
\end{theorem}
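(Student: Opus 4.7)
The natural starting point is the pointwise Bochner identity of Lemma~\ref{lem:Bochner-explicit}, rearranged so that the three ``positive'' quadratic quantities appear on the left:
\begin{align*}
p\,e_u^{(p-2)/2}|\nabla^2 u|^2 + \tfrac{p}{4}e_u^{(p-2)/2}|\nabla F_K|^2 &+ \tfrac{p^2(p-2)}{4}e_u^{(p-6)/2}|\nabla u \cdot \nabla e_u|^2 \\
&= -(\partial_t - \cA)e_u^{p/2} - p\,e_u^{(p-2)/2}(\nabla^2 F_K)_u(\nabla u, \nabla u).
\end{align*}
I would multiply by a parabolic cutoff $\eta^2$ with $\eta \equiv 1$ on $P_{1/2}$ and compactly supported spatially inside $P_2$, integrate over $P_2$, and then bound the right-hand side.

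The easy contributions are as follows. The ambiguously signed $(\nabla^2 F_K)$-term is controlled by the same case analysis on $\dist(u,\cN)$ used in the proof of Theorem~\ref{th:Bochner}: Lemma~\ref{lem:Hessian-of-F} together with $|\nabla F_K|^2 = 4K^2 F_K$ in the tubular neighborhood gives $-(\nabla^2 F_K)(\nabla u,\nabla u)\lesssim \epsilon |\nabla F_K|^2 + C e_u^2$, the first piece absorbed and the second producing an $\int e_u^{(p+2)/2}$ bound. The $\Delta$-part of $\int\eta^2\cA e_u^{p/2}$ is handled by two integrations by parts, giving $\int\Delta(\eta^2)\,e_u^{p/2}\lesssim \int_{P_2} e_u^{p/2}$. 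The time derivative integrates by parts in $t$ to $\int(\partial_t\eta^2)e_u^{p/2}$, plus a boundary term at $t=0$ that is bounded by the pointwise hypothesis $e_u\leq M^2$ (which is what forces the $C(p,n,M)$ version of the conclusion).

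The genuine difficulty, and the place where I expect to need new ideas, is the non-divergence piece $(p-2)\int\eta^2 e_u^{-1}\nabla^2 e_u^{p/2}(\nabla u,\nabla u)$. A single integration by parts produces four terms: a cutoff term involving $\nabla\eta$; a ``divergence-of-coefficient'' term equal to $+\tfrac{p(p-2)}{2}\int\eta^2 e_u^{(p-6)/2}|\nabla u\cdot\nabla e_u|^2$; and two Hessian-of-$u$ terms $(III),(IV)$. A naive Cauchy--Schwarz on $(III),(IV)$ would regenerate $\int\eta^2 e_u^{(p-2)/2}|\nabla^2 u|^2$ with a constant too large to absorb. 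The \emph{stationary-like trick} is to remove the second derivatives from $(III)$ and $(IV)$ using \eqref{eq:flow-delta-K}: in $(III)$ the factor $\sum_\alpha\partial_\alpha^2 u^k=\Delta u^k$ is replaced, via the equation rewritten as $\Delta u^k = \partial_t u^k + \tfrac{1}{2}(\partial_k F_K)(u) - \tfrac{p-2}{2}e_u^{-1}\nabla e_u\cdot\nabla u^k$, while in $(IV)$ one uses the purely algebraic identity $\sum_\alpha \partial_\alpha u^k\partial_\alpha\partial_\beta u^k = \tfrac{1}{2}(\partial_\beta e_u - \nabla F_K\cdot\partial_\beta u)$. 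Three cancellations then occur: the $\nabla F_K\cdot(\nabla u\cdot\nabla e_u)$ contributions from $(III)$ and $(IV)$ cancel exactly; a new $-\tfrac{p(p-2)}{4}\int\eta^2 e_u^{(p-4)/2}|\nabla e_u|^2$ appears with favorable sign and is simply discarded; and, crucially, the $|\nabla u\cdot\nabla e_u|^2$ contributions from $(III)$ together with the $+\tfrac{p(p-2)}{2}$ divergence-of-coefficient piece sum to exactly $\tfrac{p^2(p-2)}{4}\int\eta^2 e_u^{(p-6)/2}|\nabla u\cdot\nabla e_u|^2$, which \emph{precisely cancels} the identical term on the left coming from the Bochner identity. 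All that remains on the right are the $\nabla\eta$-cutoff piece and the $\partial_t u$ residue $-\tfrac{p(p-2)}{2}\int\eta^2 e_u^{(p-4)/2}\partial_t u\cdot(\nabla u\cdot\nabla e_u)$; by Cauchy--Schwarz each splits into a small multiple of the left-hand side (absorbed) plus a multiple of $\int\eta^2 e_u^{(p-2)/2}|\partial_t u|^2$, which by Lemma~\ref{lem:loc-energy-ineq} is $\lesssim \int_{P_2} e_u^{p/2}$. The hard part is really the coincidence of coefficients that forces the $|\nabla u\cdot\nabla e_u|^2$ cancellation: without it the non-divergence IBP simply does not close.
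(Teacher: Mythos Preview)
Your approach is correct and coincides with the paper's at the level of the actual computation: what you do by directly integrating the non-divergence term by parts and then substituting $\Delta u = \partial_t u + \tfrac{1}{2}\nabla F_K - \tfrac{p-2}{2}e_u^{-1}\nabla e_u\cdot\nabla u$ is exactly what the paper obtains by packaging the same manipulation as a ``stationary-type equation'' (Lemma~\ref{lem:stationary-eq}) applied with the vector field $X=\tfrac{p}{2}\chi^2 e_u^{-1}\nabla e_u$ (Lemma~\ref{lem:L-IBP}); the resulting terms and the exact cancellation of the $|\nabla u\cdot\nabla e_u|^2$ coefficients are identical.

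One small point to tighten: do not simply discard the favorable term $-\tfrac{p(p-2)}{4}\int\eta^2 e_u^{(p-4)/2}|\nabla e_u|^2$. After your cancellation the left-hand side retains only the $|\nabla^2 u|^2$ and $|\nabla F_K|^2$ integrals, so the Cauchy--Schwarz remainders from the $\nabla\eta$-cutoff piece and the $\partial_t u$ residue (each of the shape $\epsilon\int\eta^2 e_u^{(p-4)/2}|\nabla e_u|^2$) have nothing obvious to absorb them. The paper keeps this term (its $(V)$) precisely to perform that absorption. Alternatively you can close your version by invoking $|\nabla e_u|\le 2|\nabla u|\,|\nabla^2 u|+|\nabla F_K|\,|\nabla u|\le 2e_u^{1/2}(|\nabla^2 u|+|\nabla F_K|)$ and absorbing into the surviving left-hand terms. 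Also note that the $t=0$ boundary contribution from the time integration by parts is $-\int_{B_1}e_{u_0}^{p/2}\chi_0^2\le 0$ and may be dropped; the $M$-dependence in the final inequality comes only from bounding $\int_{P_2}e_u^{p/2}$ and $\int_{P_2}e_u^{(p+2)/2}$ pointwise.
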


\begin{remark}
In particular, the assumption implies that $K^2 F(u) \le M^2$. Thus for large $K$ it follows that $F(u) = \dist^2(u,\cN)$, and that $u$ takes values in a very small $(M/K)$-neighborhood of $\cN$. 
\end{remark}

\begin{remark}
A similar estimate is exploited by Chen and Struwe \cite{CheStr89} in the case $p = 2$. There, it is enough to test the Bochner formula 
\begin{equation}
(\pl_t-\Delta) |\nabla u|_{K}^2 + c \left( K^4 F(u) + |\nabla^2 u|^2 \right)
\ale |\nabla u|_{K}^{4}
\end{equation}
with a cut-off function $\chi^2$. Since $|\nabla u|_K$ is assumed to be bounded, integration by parts lets us put all other derivatives on $\chi^2$ and in result the integral of $K^4 F(u) + |\nabla^2 u|^2$ is bounded. In our case, there is an additional technical difficulty. Our Bochner formula involves a non-divergence form operator $\cA$, so integration by parts is not possible directly. However, we can transform the non-divergence part of $\cA$ into a lower order term. In the following, we do exactly that. 
\end{remark}

\begin{lemma}
\label{lem:stationary-eq}
Let $u$ be a smooth solution of \eqref{eq:flow-delta-K} in $P_1$, and $X \in C_c^{2,1}((-1,0]\times B_1^n, \R^n)$ a vector field. One then has the stationary-type equation
\begin{equation}
-\int (\partial_t u) \cdot du(X) e_u^\frac{p - 2}{2} 
= \int \ps{du^T \cdot du}{\nabla X}e_u^\frac{p - 2}{2} 
- \frac 12 \int e_u \cdot \dv \left( e_u^{\frac{p-2}{2}} X \right)\,.
\end{equation}
\end{lemma}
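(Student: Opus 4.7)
The plan is to test the $(\delta,K)$-flow equation \eqref{eq:flow-delta-K} against the vector field $du(X)=X^\alpha \partial_\alpha u\in \R^d$ after first multiplying through by the weight $e_u^{(p-2)/2}$, which converts the flow into the divergence-form identity
\begin{equation}
e_u^{\frac{p-2}{2}} \pl_t u \;-\; \dv\!\left(e_u^{\frac{p-2}{2}} \nabla u\right) \;=\; -\tfrac{1}{2}\, e_u^{\frac{p-2}{2}} (\nabla F_K)(u).
\end{equation}
Taking the dot product with $du(X)$ and integrating over $(-1,0]\times B_1^n$, compact support of $X$ kills all boundary contributions, so the only substantive step is to integrate by parts in space on the divergence term.

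Carrying out that integration by parts produces two pieces after the product rule: one in which the spatial derivative lands on $X^\alpha$, yielding $-\int e_u^{\frac{p-2}{2}} \ps{du^T\cdot du}{\nabla X}$ (the first term on the right-hand side of the claim), and a second in which it lands on $\pl_\alpha u^k$, giving
\begin{equation}
-\sum_{k,\alpha,\beta}\int e_u^{\frac{p-2}{2}} X^\alpha \pl_\beta u^k \pl_{\alpha\beta} u^k
\;=\; -\tfrac{1}{2}\int e_u^{\frac{p-2}{2}} X\cdot\nabla|\nabla u|^2.
\end{equation}
Here I would invoke the identity $|\nabla u|^2 = e_u - \delta^2 - F_K(u)$ together with the chain rule $X\cdot\nabla\bigl(F_K(u)\bigr) = (\nabla F_K)(u)\cdot du(X)$ to split this piece into an $e_u$-part and an $F_K$-part.

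The central observation is that the $F_K$-part produced in this way is exactly $+\tfrac{1}{2}\int e_u^{(p-2)/2}(\nabla F_K)(u)\cdot du(X)$, which cancels precisely the penalizing term coming from the right-hand side of the flow after testing against $du(X)$. This cancellation is what allows the final identity to be stated purely in terms of $e_u$ with no residual $F_K$ dependence, which is crucial since Theorem \ref{th:eps-reg-W22} needs to extract bounds on $(\nabla F_K)_u$ from the structural side rather than have it reappear explicitly.

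After this cancellation only $-\tfrac{1}{2}\int e_u^{\frac{p-2}{2}} X\cdot \nabla e_u$ remains, and one final spatial integration by parts (again no boundary term) converts it to $+\tfrac{1}{2}\int e_u \cdot \dv\bigl(e_u^{\frac{p-2}{2}} X\bigr)$. Rearranging signs yields exactly the claimed identity. The calculation is essentially bookkeeping once the multiplier $e_u^{(p-2)/2}$ is chosen; the only genuine subtlety is the cancellation between the gradient-squared rearrangement and the nonlinear forcing term of the flow, without which the identity would carry uncontrollable $F_K$ remainders.
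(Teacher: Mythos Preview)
Your proof is correct and follows essentially the same route as the paper's: multiply the flow by $e_u^{(p-2)/2}$, test against $du(X)$, integrate the divergence term by parts, and recognize that the Hessian piece $\nabla^2 u(\nabla u, X)$ is $\tfrac12 X\cdot\nabla(|\nabla u|^2+\delta^2)$. The only organizational difference is that the paper integrates by parts on the $F_K$ term directly (so $|\nabla u|^2+\delta^2$ and $F_K(u)$ each contribute a $\tfrac12\int(\cdot)\dv(e_u^{(p-2)/2}X)$ term that combine into $e_u$ at the end), whereas you instead write $|\nabla u|^2 = e_u - \delta^2 - F_K(u)$ and observe a cancellation of the $F_K$ piece against the forcing term---these are the same computation, just bookkept differently.
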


\begin{proof}
The idea, as in the elliptic setting, is to pair \eqref{eq:flow-delta-K} with $du(X) e_u^{\frac{p-2}{2}}$ and integrate by parts in space. Since $X$ is compactly supported, no boundary terms appear. Indeed, doing so gives
\begin{align}
\int (\partial_t u) \cdot du(X) e_u^\frac{p - 2}{2} &= \int (\dv(e_u^\frac{p - 2}{2} \nabla u)\cdot du(X) - \frac{1}{2}\int d(F_K(u))(X) e_u^\frac{p - 2}{2}\notag\\
&= -\int \ps{\nabla u}{\nabla(du(X))}e_u^\frac{p - 2}{2} + \frac{1}{2}\int F_K(u)\dv(e_u^\frac{p - 2}{2}X) \notag\\
&=-\int (\nabla^2 u)(du, X)e_u^\frac{p - 2}{2} -\int \ps{du^T \cdot du}{\nabla X}e_u^\frac{p - 2}{2} \notag\\
& \pheq + \frac{1}{2}\int F_K(u)\dv(e_u^\frac{p - 2}{2}X)\,.
\end{align}
Let us now focus on the first term on the r.h.s, which we will integrate by parts
\begin{equation}
-\int (\nabla^2 u)(du, X)e_u^\frac{p - 2}{2} = -\int X\cdot \nabla \left( \frac{1}{2}(|du|^2 + \delta^2) \right) e_u^\frac{p - 2}{2} 
= \frac{1}{2}\int (|du|^2 + \delta^2)\dv(e_u^\frac{p - 2}{2}X) \,.
\end{equation}
Now, replacing $-\int (\nabla^2 u)(du, X)e_u^\frac{p - 2}{2}$ in the previous r.h.s with this final expression, and grouping $F_K(u)$ and $|du|^2 + \delta^2$ terms into $e_u$, we obtain the desired conclusion.
\end{proof}

\begin{lemma}
\label{lem:L-IBP}
Let $u$ be a smooth solution of \eqref{eq:flow-delta-K} in $P_1$, and $\chi \in C_c^{2,1}((-1,0]\times B_1^n)$ a cutoff. One then has the identity up to lower order error
\begin{align}
-\frac{p}{2}\int e_u^{-1}(\partial_t u) \cdot du(\nabla e_u) e_u^\frac{p - 2}{2}\chi^2 &= \int e_u^{-1}\ps{du^T \cdot du}{\nabla^2 e_u^{p/2}}\chi^2 \notag\\
& \pheq -\frac{p^2}{4}\int e_u^{-1}|du(\nabla e_u)|^2e_u^\frac{p - 4}{2}\chi^2 \notag\\
& \pheq + \frac{p}{4}\int e_u^{-1}|\nabla e_u|^2e_u^\frac{p - 2}{2}\chi^2 + E\,,
\end{align}
where $|E| \leq p \int |\nabla e_u|e_u^{\frac{p - 2}{2}}|\nabla \chi|\chi$.
\end{lemma}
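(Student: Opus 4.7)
The plan is to apply Lemma \ref{lem:stationary-eq} with the specific test vector field
\begin{equation*}
X = \frac{p}{2}\, e_u^{-1}\, \chi^2\, \nabla e_u,
\end{equation*}
which is smooth and compactly supported in space (thanks to $\chi$; note $e_u \ge \delta^2 > 0$). With this choice, $-\int (\partial_t u)\cdot du(X)\, e_u^{(p-2)/2}$ is exactly the left-hand side of the lemma. It therefore remains to expand the two right-hand side pieces of Lemma \ref{lem:stationary-eq} and identify them with the three displayed terms plus the error.

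For the divergence term I would integrate by parts in space (no boundary contribution, since $\chi$ is compactly supported):
\begin{equation*}
-\frac{1}{2}\int e_u \cdot \dv\bigl(e_u^{(p-2)/2} X\bigr) = \frac{1}{2}\int \nabla e_u \cdot e_u^{(p-2)/2} X = \frac{p}{4}\int |\nabla e_u|^2\, e_u^{(p-4)/2}\, \chi^2,
\end{equation*}
which, after rewriting $e_u^{(p-4)/2} = e_u^{-1}\, e_u^{(p-2)/2}$, matches the third target term exactly. For the term $\int \ps{du^T\cdot du}{\nabla X}\, e_u^{(p-2)/2}$, I would expand $\nabla X$ by the product rule into three pieces: a $\nabla^2 e_u$ piece, a $-e_u^{-2}\nabla e_u \otimes \nabla e_u$ piece (from differentiating $e_u^{-1}$), and a $e_u^{-1}\nabla e_u \otimes \nabla \chi^2$ piece. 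The first two combine into the first two target terms via the chain-rule identity
\begin{equation*}
\nabla^2 e_u^{p/2} = \frac{p}{2}\, e_u^{(p-2)/2}\, \nabla^2 e_u + \frac{p(p-2)}{4}\, e_u^{(p-4)/2}\, \nabla e_u \otimes \nabla e_u,
\end{equation*}
where the arithmetic $\frac{p(p-2)}{4} - \frac{p^2}{4} = -\frac{p}{2}$ confirms the coefficient of the $|du(\nabla e_u)|^2$ term on the right-hand side.

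The third piece, containing $\nabla \chi^2 = 2\chi \nabla \chi$, forms the error $E$; the bound $|du^T \cdot du| \le |du|^2 \le e_u$ kills the factor of $e_u^{-1}$ and yields $|E|\le p\int |\nabla e_u|\, e_u^{(p-2)/2}\, |\nabla \chi|\, \chi$ as claimed. I do not anticipate any substantive obstacle: once $X$ is chosen, the proof is bookkeeping. The only genuine insight is recognizing that this particular $X$ produces $\nabla^2 e_u^{p/2}$ paired against $du^T\cdot du$, which is precisely the second-order piece of $\cA e_u^{p/2}$ one will want to test against in the Bochner formula downstream.
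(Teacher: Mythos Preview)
Your proposal is correct and follows essentially the same approach as the paper: both apply Lemma~\ref{lem:stationary-eq} with the vector field $X = \tfrac{p}{2}\chi^2 e_u^{-1}\nabla e_u$ (which the paper also rewrites as $\chi^2 e_u^{-p/2}\nabla e_u^{p/2}$), then expand $\nabla X$ by the product rule and bound the $\nabla\chi$ piece as the error. The only cosmetic differences are that the paper handles the divergence term $(II)$ by splitting off a total divergence rather than integrating by parts onto $e_u$, and obtains the $\nabla^2 e_u^{p/2}$ term directly from the alternate form of $X$ rather than reassembling via the chain rule as you do; both routes are equivalent.
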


\begin{proof}
Applying Lemma \ref{lem:stationary-eq} with $X := \frac{p}{2}\chi^2 e_u^{-1}\nabla e_u = \chi^2 e_u^{-p/2}\nabla e_u^{p/2}$ yields
\begin{align}
-\frac{p}{2}\int e_u^{-1}(\partial_t u) \cdot du(\nabla e_u) e_u^\frac{p - 2}{2}\chi^2 &= \int \ps{du^T \cdot du}{\nabla (\chi^2 e_u^{-p/2}\nabla e_u^{p/2})}e_u^\frac{p - 2}{2} \notag\\
& \pheq - \frac 12 \int e_u \cdot \dv \left( \chi^2 e_u^{-1} \nabla e^{p/2} \right) \notag\\
& =: (I) + (II)\,.
\end{align}
Now, examining $(I)$ gives
\begin{align}
(I) &=\int e_u^{-1}\ps{du^T \cdot du}{\nabla^2 e_u^{p/2}}\chi^2 -\frac{p}{2}\int e_u^{-2} \ps{du^T \cdot du}{\nabla e_u\otimes \nabla e_u^{p/2})} \chi^2 \notag\\
& \pheq +2\int e_u^{-1}\ps{du^T \cdot du}{\nabla\chi\otimes \nabla e_u^{p/2}}\chi \notag\\
&=\int e_u^{-1}\ps{du^T \cdot du}{\nabla^2 e_u^{p/2}}\chi^2 -\frac{p^2}{4}\int e_u^{-1}|du(\nabla e_u)|^2e_u^\frac{p - 4}{2}\chi^2 \notag\\
& \pheq +p\int e_u^{-1}\ps{du^T \cdot du}{\nabla\chi\otimes \nabla e_u}e_u^\frac{p - 2}{2}\chi\,.
\end{align}

Note that the third term on the r.h.s takes the form required of $E$. As for $(II)$, 
\begin{align}
(II) 
& = - \frac 12 \int \dv \left( \chi^2 \nabla e^{p/2} \right) 
- \frac 12 \int e_u \cdot \chi^2 \nabla e^{p/2} \nabla e_u^{-1} \notag\\
&= \frac{p}{4}\int e_u^{-1}|\nabla e_u|^2e_u^\frac{p - 2}{2}\chi^2 \,,
\end{align}
where we exploited the spatial compactness of $\supp\chi$ to write the integral of a divergence as $0$ above. 
\end{proof}

\begin{proof}[Proof of Theorem \ref{th:eps-reg-W22}]
Let $\chi \in C_c^\infty((-\infty,0] \times \R^n)$ be a smooth cutoff satisfying $\mathbbm{1}_{P_{1/2}} \le \chi \le \mathbbm{1}_{P_1}$ with derivatives $\nabla \chi$, $\pl_t \chi$ bounded by dimensional constants.

We multiply the Bochner formula \eqref{eq:L-Bochner-formula} by $\chi^2$ and integrate over $P_1$. Recalling the precise form of the operator $\cA$, this gives us
\begin{align}
&\int (\partial_t e_u^{p/2})\chi^2 - \int (\Delta e_u^{p/2})\chi^2 - (p - 2)\int e_u^{-1}\ps{du^T\cdot du}{\nabla^2 e_u^{p/2}}\chi^2 \notag\\
&= -p\int e_u^{\frac{p - 2}{2}}(\nabla^2 F_K)_u(\nabla u,\nabla u)\chi^2 - \frac{p}{4}\int e_u^{\frac{p - 2}{2}} |(\nabla F_K)_u|^2\chi^2 - p\int e_u^{\frac{p - 2}{2}}|\nabla^2 u|^2\chi^2 \notag\\
& \pheq - \frac{p^2(p - 2)}{4}\int e_u^{\frac{p}{2} - 3}|du(\nabla e_u)|^2\chi^2\,.
\end{align}
As already mentioned, there is one problematic term that does not appear in the case $p = 2$: the third term on the l.h.s. Together with the final term on the r.h.s. it can now be rewritten according to the above Lemma~\ref{lem:L-IBP}:
\begin{align}
& (p - 2)\int e_u^{-1}\ps{du^T\cdot du}{\nabla^2 e_u^{p/2}}\chi^2 - \frac{p^2(p - 2)}{4}\int e_u^{\frac{p}{2} - 3}|du(\nabla e_u)|^2\chi^2 \notag\\
&= -(p - 2)\left(\frac{p}{2}\int e_u^{-1}(\partial_t u) \cdot du(\nabla e_u) e_u^\frac{p - 2}{2}\chi^2 + \frac{p}{4}\int e_u^{-1}|\nabla e_u|^2e_u^\frac{p - 2}{2}\chi^2 + E\right)\,.
\end{align}
In summary, we were able to rewrite the terms of interest as
\begin{align}
\label{eq:Hess-eq}
\int e_u^{\frac{p - 2}{2}} \left( \frac p4 |(\nabla F_K)_u|^2 + p |\nabla^2 u|^2 \right) \chi^2 
&= -p\int e_u^{\frac{p - 2}{2}}(\nabla^2 F_K)_u(\nabla u,\nabla u)\chi^2 - \int(\partial_t e_u^{p/2})\chi^2 \notag\\
& \pheq + \int (\Delta e_u^{p/2})\chi^2 -\frac{p(p - 2)}{2}\int e_u^{-1}(\partial_t u) \cdot du(\nabla e_u) e_u^\frac{p - 2}{2}\chi^2 \notag\\
& \pheq - \frac{p(p - 2)}{4}\int e_u^{-1}|\nabla e_u|^2e_u^\frac{p - 2}{2}\chi^2 -(p - 2)E \notag\\
&=: (I) + (II) \notag\\
& \pheq + (III) + (IV) \notag\\
& \pheq + (V) + (VI)\,.
\end{align}
$(I)$ gets handled similarly to the treatment of the original Bochner formula. By applying Lemma \ref{lem:Hessian-of-F}, we have 
\begin{align}
(I) & = -p\int e_u^{\frac{p - 2}{2}}(\nabla^2 F_K)_u(\nabla u,\nabla u)\chi^2 \notag\\
& \le C \int e_u^{\frac{p - 2}{2}} |(\nabla F_K)_u| |\nabla u|^2 \chi^2 \notag\\
& \le \frac{p}{8} \int e_u^{\frac{p - 2}{2}} |(\nabla F_K)_u|^2 \chi^2 
+ C(p) \int_{\supp \chi} e_u^{\frac{p + 2}{2}}\,,
\end{align}
where the first term is absorbable, and the second appears on the r.h.s of \eqref{eq:eps-reg-W22-eq}. 

$(II)$ and $(III)$ are handled using integration by parts in time and space, respectively:
\begin{align}
(II) & = -\int (\partial_t e_u^{p/2})\chi^2 = \int e_u^{p/2}\partial_t\chi^2 - \int_{B_1} e_{u_0}^{p/2}\chi_0^2 \leq C(\|\partial_t \chi^2\|_\infty)\int_{\supp \chi}e_u^{p/2}, \notag\\
(III) & = \int (\Delta e_u^{p/2})\chi^2 = \int e_u^{p/2} \Delta \chi^2 \le C(\|\Delta \chi^2\|_\infty) \int_{\supp \chi}e_u^{p/2}\,.
\end{align}
The error term $(VI)$ (i.e., $E$) can be absorbed by $(V)$:
\begin{align}
\label{eq:(V)+(VI)}
(V) + (VI) &\leq - \frac{p(p - 2)}{4}\int e_u^{-1}|\nabla e_u|^2e_u^\frac{p - 2}{2}\chi^2 + p \int e^{\frac{p - 2}{2}}|\nabla e_u||\nabla \chi|\chi \notag\\
&\leq - \frac{p(p - 2)}{4}\int e_u^{-1}|\nabla e_u|^2e_u^\frac{p - 2}{2}\chi^2 + \frac{p(p - 2)}{8}\int e_u^{-1}|\nabla e_u|^2e_u^\frac{p - 2}{2}\chi^2 \notag\\
& \pheq + C(p)\int_{\supp \chi} e_u^{p/2}|\nabla\chi|^2 \notag\\
&\leq - \frac{p(p - 2)}{8}\int e_u^{-1}|\nabla e_u|^2e_u^\frac{p - 2}{2}\chi^2 + C(p, \|\nabla \chi\|_\infty)\int_{\supp \chi} e_u^{p/2}\,,
\end{align}
using Peter-Paul for the final inequality.

Finally, $(IV)$ is handled analogously:
\begin{align}
(IV) &= -\frac{p(p - 2)}{2}\int e_u^{-1}(\partial_t u) \cdot du(\nabla e_u)e_u^\frac{p - 2}{2}\chi^2 \notag\\
&\leq \frac{p(p - 2)}{16}\int e_u^{-2}|du(\nabla e_u)|^2e_u^\frac{p - 2}{2}\chi^2 + C(p)\int_{\supp \chi} e_u^\frac{p - 2}{2}|\partial_t u|^2 \notag\\
&\leq \frac{p(p - 2)}{16}\int e_u^{-1}|\nabla e_u|^2e_u^\frac{p - 2}{2}\chi^2 + C(p, n)\int_{P_2} e_u^{p/2}\,,
\end{align} 
where the first term on the r.h.s can be absorbed into the negative term of the $(V) + (VI)$ upper bound, and the second term on the r.h.s was bounded using Lemma \ref{lem:loc-energy-ineq}.
\end{proof}

\section{Weak solutions of the $\delta$-regularized flow}
\label{sec:K-Limit}

\subsection{Convergence of the $(\delta,K)$-flow in $\eps$-regularity regions}

Given a collection $\{u_K\}_{K > 0}$ of solutions of the $(\delta,K)$-flow, we will now verify that -- in the $\eps$-smallness regime -- we can extract a subsequence that converges in an appropriate sense to a solution $u$ of the $(p,\delta)$-flow \eqref{eq:reg-flow}. 

\begin{lemma}
\label{lem:eps-reg-HMF-distr}
For $\delta > 0$ fixed, let $u_K$ be smooth solutions to \eqref{eq:flow-delta-K}${}_K$ (respectively) in $P_1$, and assume that $|\nabla u_K|_{\delta,K} \leq M$ in $P_1$ with a uniform constant $M$. Then, up to taking a subsequence, $u_K \to u$ in $C^0(P_{1/2})$ and $\nabla u_K \to \nabla u$ in $L^q(P_{1/2})$ for all $q < \infty$, where $u$ solves the $\delta$-flow \eqref{eq:reg-flow} weakly:
\begin{equation}
|\nabla u|_\delta^{p - 2}\pl_t u -  \dv(|\nabla u|_\delta^{p-2} \nabla u) = |\nabla u|_\delta^{p - 2}A_u(\nabla u, \nabla u)\,.
\end{equation}
\end{lemma}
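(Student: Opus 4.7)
The plan is to combine the uniform bounds coming from the hypothesis $|\nabla u_K|_{\delta,K} \le M$ with the weighted $W^{2,2}$ estimate of Theorem \ref{th:eps-reg-W22} to extract a convergent subsequence, and then pass to the limit in the weak form of the equation. Throughout, constants may depend on the fixed $\delta > 0$.

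First I would collect uniform estimates on $u_K$. Since $e_{u_K}^{(p-2)/2} \ge \delta^{p-2}$ and $e_{u_K} \le M^2$, Theorem \ref{th:eps-reg-W22} (applied on a slightly larger cylinder) yields $\nabla^2 u_K$ and $\nabla F_K(u_K)$ uniformly bounded in $L^2(P_{3/4})$. The local energy inequality (Lemma \ref{lem:loc-energy-ineq}) gives $\pl_t u_K$ bounded in $L^2(P_{3/4})$, and Lemma \ref{lem:grad-to-Lip} gives a parabolic Lipschitz bound on $u_K$. By Arzel\`a--Ascoli together with an Aubin--Lions compactness argument, some subsequence satisfies $u_K \to u$ in $C^0(P_{1/2})$, $\nabla u_K \to \nabla u$ strongly in $L^2(P_{1/2})$, and $\pl_t u_K \rightharpoonup \pl_t u$ weakly in $L^2(P_{1/2})$. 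Interpolating with the uniform $L^\infty$ bound on $\nabla u_K$ promotes the strong gradient convergence to $L^q(P_{1/2})$ for every $q < \infty$.

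Next I would identify the limit and the limiting coefficient. The estimate $K^2 F(u_K) \le M^2$ forces $u(t,x) \in \cN$ a.e. Since $|\nabla F(q)|^2 \approx F(q)$ near $\cN$, the $L^2$ bound on $\nabla F_K(u_K)$ reads $\|K^4 F(u_K)\|_{L^1} \le C$; writing $K^2 F(u_K) = K^{-2} \cdot K^4 F(u_K)$ and interpolating with the $L^\infty$ bound shows $K^2 F(u_K) \to 0$ in every $L^q$. Consequently $|\nabla u_K|_{\delta,K}^{p-2} \to |\nabla u|_\delta^{p-2}$ strongly in $L^q$. Testing \eqref{eq:flow-delta-K}, after multiplication by $|\nabla u_K|_{\delta,K}^{p-2}$ and integration by parts in space, against $\varphi \in C_c^\infty(P_{1/2}, \R^d)$, the left-hand side passes to the limit by combining strong convergence of the coefficient and of $\nabla u_K$ with weak convergence of $\pl_t u_K$; it yields the distributional pairing $\langle |\nabla u|_\delta^{p-2} \pl_t u - \dv(|\nabla u|_\delta^{p-2} \nabla u), \varphi \rangle$. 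It remains to identify the weak $L^2$ limit $\Psi$ of $\Psi_K := -\tfrac 12 |\nabla u_K|_{\delta,K}^{p-2} \nabla F_K(u_K)$ as $|\nabla u|_\delta^{p-2} A_u(\nabla u, \nabla u)$.

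For any smooth tangent vector field $V$ on $\cN$ extended smoothly to a neighborhood, $\nabla F(q) = 2(q - \pi_{\cN}(q))$ is normal to $T_{\pi_{\cN}(q)}\cN$ while $V(\pi_{\cN}(q))$ is tangent, so $\Psi_K \cdot V(\pi_{\cN}(u_K)) \equiv 0$ in the region where $u_K$ lies near $\cN$ (which covers $P_{1/2}$ for $K$ large). Passing to the limit via $\pi_{\cN}(u_K) \to u$ gives $\Psi \cdot V(u) = 0$ a.e., hence $\Psi \perp T_u \cN$. Since $\pl_t u$ and each $\pl_\alpha u$ are tangent to $\cN$ (as $u$ maps into $\cN$), the only normal contribution to the already-identified distribution $|\nabla u|_\delta^{p-2} \pl_t u - \dv(|\nabla u|_\delta^{p-2} \nabla u)$ is the normal projection of $-|\nabla u|_\delta^{p-2} \Delta u$, which by the classical second-fundamental-form identity equals $|\nabla u|_\delta^{p-2} A_u(\nabla u, \nabla u)$ (matching the sign convention of \eqref{eq:classical-HMF}). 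The main technical hurdle, absent in the $p=2$ case, is that the non-divergence form of the principal operator couples tangential and normal information through the weight $|\nabla u|_{\delta,K}^{p-2}$, so strong convergence of $\nabla u_K$ is indispensable; this in turn rests on the weighted Hessian bound of Theorem \ref{th:eps-reg-W22}, itself obtained through the stationary-type trick of Lemma \ref{lem:stationary-eq} rather than naive integration by parts.
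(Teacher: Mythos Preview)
Your proof is correct and follows essentially the same approach as the paper: collect the uniform $\Lip$, $L^2(\partial_t u)$, $L^2(\nabla^2 u)$, and $L^2(\nabla F_K(u_K))$ bounds from Corollary~\ref{cor:eps-reg-Lip}, Lemma~\ref{lem:loc-energy-ineq}, and Theorem~\ref{th:eps-reg-W22}; extract a subsequence via Arzel\`a--Ascoli and Aubin--Lions; upgrade the gradient convergence by interpolation with the $L^\infty$ bound; show $K^2 F(u_K) \to 0$ in $L^q$; and identify the weak limit of the penalty term as normal by testing against tangent vectors, then read off the second-fundamental-form expression from the normal projection of $-\dv(|\nabla u|_\delta^{p-2}\nabla u)$. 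Your identification of the normal limit is in fact slightly cleaner than the paper's, since you exploit the exact orthogonality $\nabla F(q) \perp T_{\pi_{\cN}(q)}\cN$ and pass to the limit via $V(\pi_{\cN}(u_K)) \to V(u)$ in $C^0$, whereas the paper estimates the near-orthogonality $|\psi \cdot (\nabla \dist_{\cN})_{u_K}| \ale |u_K - u|$ for $\psi \in u^{-1}(T\cN)$; the content is the same.
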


\begin{proof}
Let's begin by listing the necessary estimates that we get from our previous work: by Corollary \ref{cor:eps-reg-Lip}, Lemma \ref{lem:loc-energy-ineq}, and Theorem \ref{th:eps-reg-W22} respectively, we have the ($\delta$-dependent) bounds
\begin{equation}
[u_K]_{\Lip(P_{1/2})}\,, \
\| \pl_t u_K \|_{L^2(P_{1/2})}\,, \
\|(\nabla F_K)_{u_K}\|_{L^2(P_{1/2})} \,, \
\|\nabla^2u_K\|_{L^2(P_{1/2})} \
\leq C(n,M,\delta)\,.
\end{equation}
In fact, by the assumed upper bound on the energy density, for $K \geq r_0^{-1}M$ we have $\dist^2(u_K, \cN) \leq K^{-2}M^2$, so that by the compactness of $\cN$ we get a uniform bound $\| u_K \|_{C^0} \le C$, and we can replace the bound on $\|(\nabla F_K)_{u_K}\|_{L^2(P_{1/2})}$ with 
\begin{equation}
\|K^2\dist(u_K,\cN)\|_{L^2(P_{1/2})} \le C(n,M,\delta)\,.
\end{equation}

After passing to a subsequence, we can assume by the above estimates that $\partial_t u_K \wto \partial_t u$ and $\nabla^2 u_K \wto \nabla^2 u$ weakly in $L^2$, $\nabla u_K \stackrel{*}{\wto} \nabla u$ in $L^\infty$ (with $\|\nabla u\|_\infty \leq M$), and $-\frac{1}{2}(\nabla F_K)_{u_K}\wto \eta$ weakly in $L^2$ for some $\eta \in L^2(P_{1/2},\R^d)$, all on $P_{1/2}$. Moreover, since $u \in L^2((-1/4,0), W^{2,2}(B_{1/2}))$ and $\pl_t u \in L^2((-1/4,0), L^2(B_{1/2}))$, an application of the Aubin--Lions lemma leads to strong $L^2$ convergence $\nabla u_K \to \nabla u$ \cite[Lem.~7.7]{Rou13}. By the Arzela-Ascoli theorem, $u_K \to u$ in $C^0$. Finally, due to the estimate $\dist^2(u_K, \cN) \leq K^{-2}M^2$ and pointwise convergence, the limiting map $u$ takes values in $\cN$. 

\medskip

Note that by interpolation, strong $L^2$ convergence $\nabla u_K \to \nabla u$ together with weak-* convergence in $L^\infty$ implies strong convergence in any $L^q$ with $1 \le q < \infty$. Similarly, since 
$K^2 F(u_K) \le M$ pointwise and $\|K^4 F(u_K)\|_{L^1(P_1)} \le C$ (the latter follows from the $L^2$ bound on $\nabla F_K$), then 
\begin{equation}
\int_{P_1} |K^2 F(u_K)|^q 
= K^{-2} \int_{P_1} K^4 F(u_K) \cdot |K^2 F(u_K)|^{q-1} 
\le K^{-2} \cdot C \cdot M^{q-1} \to 0\,,
\end{equation}
which means that $K^2 F(u_K) \to 0$ in any $L^q$. The conclusion of these two observations is that 
\begin{equation}
|\nabla u_K|_{\delta, K} \to |\nabla u|_{\delta} 
\quad \text{in } L^q, \quad 1 \le q < \infty\,,
\end{equation}
simply because $|\nabla v|_{\delta, K}^2 = \delta^2 + |\nabla v|^2 + K^2 F(v)$ and $|\nabla v|_{\delta}^2 = \delta^2 + |\nabla v|^2$ by definition. 

\medskip

Let's now verify that $u$ solves \eqref{eq:reg-flow} weakly by showing that the convergence of the $u_K$ is enough to pass to the limit in \eqref{eq:reg-flow}. Take a test function $\phi \in C_c^{\infty}(P_1)$, and let us examine the distributional convergence of each term in turn:

\smallskip

\textsc{Term 1:} 
$|\nabla u_K|^{p - 2}_{\delta, K}\partial_t u_K\to |\nabla u|^{p - 2}_{\delta}\partial_t u$ 

By previous remarks, $|\nabla u_K|^{p - 2}_{\delta, K} \to |\nabla u|^{p - 2}_{\delta}$ strongly in any $L^q$, in particular in $L^2$. Since additionally $\partial_t u_K \wto \partial_t u$ weakly in $L^2$, the product converges weakly in $L^1$ to the product of the limits. In consequence, 
\begin{equation}
\int \phi |\nabla u_K|^{p - 2}_{\delta, K} \partial_t u_K 
\to \int \phi |\nabla u|^{p - 2}_{\delta} \partial_t u\,.
\end{equation}

\smallskip

\textsc{Term 2:} 
$\dv(|\nabla u_K|_{\delta,K}^{p - 2} \nabla u_K)\to \dv(|\nabla u|^{p - 2}_{\delta}\nabla u)$ 

With this term, we can integrate by parts:
\begin{equation}
\int \phi \dv(|\nabla u_K|_{\delta,K}^{p - 2} \nabla u_K) = \int |\nabla u_K|_{\delta,K}^{p - 2}\nabla \phi \cdot \nabla u_K \,.
\end{equation}
Now we are able apply a similar reasoning: the convergence of $|\nabla u_K|_{\delta,K}$ and $\nabla u_K$ to their respective counterparts is strong in any $L^q$, so the same is true for the product $|\nabla u_K|_{\delta,K}^{p - 2} \nabla u_K$, and we infer that 
\begin{align}
&\int \phi \dv(|\nabla u_K|_{\delta,K}^{p - 2} \nabla u_K) = \int |\nabla u_K|_{\delta,K}^{p - 2}\nabla \phi \cdot \nabla u_K \notag\\
&\xrightarrow{K \to \infty} \int |\nabla u|_{\delta}^{p - 2}\nabla \phi \cdot \nabla u = \int \phi\dv(|\nabla u|_{\delta}^{p - 2} \nabla u)\,,
\end{align}
meaning that $\dv(|\nabla u_K|_{\delta,K}^{p - 2} \nabla u_K)\to \dv(|\nabla u|^{p - 2}_{\delta}\nabla u)$ in the weak sense. 

\smallskip

\textsc{Term 3:} 
$-\frac{1}{2}|\nabla u_K|^{p - 2}_{\delta,K}(\nabla F_K)_{u_K} \to \tilde{\eta} \in u^{-1}(T^\perp\cN)$ 

Recall the $L^2$-weak convergence asserted at the beginning of the proof: $-\frac{1}{2}(\nabla F_K)_{u_K} \wto \eta$. As in the case of the first term, we thus have $-\frac{1}{2}|\nabla u|_{\delta,K}^{p - 2}(\nabla F_K)_{u_K} \wto \tilde{\eta} := |\nabla u|^{p - 2}_{\delta} \eta \in L^2$, weakly in $L^1$. 
It remains to show that $\eta$ takes values in the pullback normal bundle $u^{-1}(T^\perp \cN)$. To this end, take any $\psi\in C^0(P_{1/2}, u^{-1}(T\cN))$. Assuming again that $K \geq r_0^{-1}M$ is large enough, we then have 
\begin{align}
\left|\int \psi\cdot \eta \right| 
&= \lim_{K\to\infty}\frac{1}{2}\left|\int \psi\cdot(\nabla F_K)_{u_K} \right| \notag\\
& \leq \limsup_{K \to \infty} \int K^2\dist(u_K,\cN)|\psi\cdot (\nabla \dist_{\cN})_{u_K}| \notag\\
& \leq \limsup_{K \to \infty}\left(\int K^4\dist^2(u_K,\cN)\right)^{1/2}\left(\int|\psi\cdot (\nabla \dist_{\cN})_{u_K}|^2\right)^{1/2} \notag\\
& \ale_{n,\delta,M} \limsup_{K \to \infty} \| \psi \cdot (\nabla \dist_{\cN})_{u_K} \|_{L^\infty(P_{1/2})}\,. 
\end{align}
Now, since $\psi(z)$ lies in $T_{u(z)} \cN$ and $(\nabla \dist_{\cN})_{u_K(z)}$ lies in $T^\perp_{u_K(z)} \cN$, their inner product is bounded by 
\begin{align}
|\psi(z) \cdot (\nabla \dist_{\cN})_{u_K(z)}|&= |\psi(z) \cdot (P_{T_{u(z)}\cN} - P_{T_{u_K(z)}\cN} + P_{T_{u_K(z)}\cN})(\nabla \dist_{\cN})_{u_K(z)}| \notag\\
&= |\psi(z) \cdot (P_{T_{u(z)}\cN} - P_{T_{u_K(z)}\cN})(\nabla \dist_{\cN})_{u_K(z)}| \notag\\
&\ale_{\cN} |u_K(z)-u(z)| \cdot |\psi(z)|\,.
\end{align}
Thanks to the uniform convergence $u_K \to u$ in $C^0(P_{1/2})$, we conclude that the limit above is $0$.

Having handled each term, we can proceed to the distributional equation for $u$. In particular:
\begin{align}
0 &= \int \phi(|\nabla u_K|^{p - 2}_{\delta,K}\partial_t u_K - \dv(|\nabla u_K|^{p - 2}_{\delta,K}\nabla u_K) + \frac{1}{2}|\nabla u_K|^{p - 2}_{\delta,K}(\nabla F_K)_{u_K}) \notag\\
&\to \int\phi(|\nabla u|^{p - 2}_{\delta}\partial_t u - \dv(|\nabla u|^{p - 2}_{\delta}\nabla u) - \tilde{\eta})\,,
\end{align}
so that $|\nabla u|^{p - 2}_{\delta} \pl_t u - \dv(|\nabla u|^{p - 2}_{\delta}\nabla u) = \tilde{\eta}$ distributionally. Finally, since $u$ maps into $\cN$, we necessarily have that $|\nabla u|^{p - 2}_{\delta}\partial_t u$ lies in the tangent space $T_u\cN$, while 
\begin{equation}
P_{T^\perp_u\cN} \left[ - \dv(|\nabla u|^{p - 2}_{\delta} \nabla u) \right] 
= |\nabla u|^{p - 2}_{\delta} P_{T^\perp_u\cN} \left[ - \Delta u \right]
= |\nabla u|_\delta^{p - 2}A_u(\nabla u, \nabla u)\,.
\end{equation}
Since $\tilde{\eta} \in L^2(u^{-1}(T^\perp \cN))$, we conclude that it has to coincide with the above, and thus $u$ solves $\delta$-flow distributionally.
\end{proof}

\subsection{Minkowski content estimates on the concentration set}

According to the $\eps$-regularity theorem and the previous subsection, the sequence $u_K$ converges to a solution whenever the scale-invariant energy is $\eps$-small. This only leaves out points $z$ such that, whatever $s > 0$ we choose, the scale-invariant energy $\Phi^{\delta,K}_z(s; u_{K})$ remains large. We shall now verify that this remaining set -- the concentration set -- is small. 

As we shall see, this analysis does not depend on the PDE satisfied by $u_K$. Hence, in the following we do not assume that $u_K$ solves a PDE. Instead, we only assume that $u_K$ is a sequence of functions for which the quantity $\Phi^{\delta,K}_z(s; u_{K})$ from Lemma \ref{th:monotonicity-formula} is non-decreasing in $s > 0$ and that the corresponding initial maps $u_{K,0}$ satisfy a uniform bound $\int_{\R^n} (|\nabla u_{K,0}|^2+F_K(u_{K,0}))^{p/2} \le V$. Note that this is an $L^p$ bound on $|\nabla u_{K,0}|_{0,K}$, which we consider here because the $L^p$ norm of $|\nabla u_{K,0}|_{\delta,K}$ is always infinite for $\delta > 0$. 

If the maps $u_{K,0}$ take values in $\cN$, as it happens in our intended application, this energy assumption reduces to a~uniform bound on $\| \nabla u_{K,0} \|_{L^p(\R^n)}$. Moreover, the proof will only make use of energy bounds with Gaussian weights: 
\begin{equation}
s^{p/2} H_s |\nabla u_{K,0}|^p_{\delta,K} (x) \le V(s,x) 
\qquad \text{for all } s > 0, \ x \in \R^n\,,
\end{equation}
in which the constant may depend on everything except $K$. Such bounds are easily seen to follow from $\int_{\R^n} |\nabla u_{K,0}|^p_{0,K} \le V$. On the other hand, they imply that $\Phi^{\delta,K}_{(t,x)}(s; u_{K}) \le V(t,x)$ by monotonicity. 

\begin{definition}
For such a sequence -- satisfying monotonicity and with uniformly bounded energy -- let us define the concentration set $\cS$, which plays the role of the singular set: 
\begin{equation}
\cS := \left\{ z \in (0,\infty) \times \R^n \mid \forall_{s > 0} \, \liminf_{K \to \infty} \Phi^{\delta,K}_z(s; u_{K}) \ge \eps_0 \right\}\,.
\end{equation}
where $\eps_0 = \eps_0(n,\cN,p) > 0$ is the same as in Theorem \ref{th:eps-regularity}.
\end{definition}

\begin{lemma}
The concentration set $\cS$ is closed.
\end{lemma}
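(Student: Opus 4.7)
I will show the complement $\cS^c$ is open, which is equivalent to $\cS$ being closed. Fix $z = (t,x) \in \cS^c$; unwinding the definition, there exist $s > 0$, $\eta > 0$, and a subsequence $K_j \to \infty$ with
\[
\Phi^{\delta, K_j}_z(s; u_{K_j}) \le \eps_0 - \eta \quad \text{for every } j\,.
\]
The plan is to produce a small neighborhood $U$ of $z$ on which the corresponding inequality (with $s$ replaced by a point-dependent scale $\tilde s$) persists along the same subsequence.

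For $z' = (t', x')$ close to $z$, set $\tilde s := s + (t'-t)$, so that $t' - \tilde s = t - s$. The benefit of this shift is that the underlying time slice $y \mapsto |\nabla u_{K, t-s}|_{\delta,K}^p(y)$ then appears identically in both $\Phi^{\delta,K}_{z'}(\tilde s; u_K)$ and $\Phi^{\delta,K}_z(s; u_K)$, and the difference reduces to a pure Gaussian-weight comparison:
\[
\Phi^{\delta,K}_{z'}(\tilde s; u_K) - \Phi^{\delta,K}_z(s; u_K) = \int_{\R^n} \bigl[ \tilde s^{p/2} \rho_{\tilde s}(x'-y) - s^{p/2}\rho_s(x-y) \bigr]|\nabla u_{K, t-s}|_{\delta,K}^p(y)\, dy\,.
\]
The main technical step is to show that the right-hand side is bounded by $C(n, p, s, V) \cdot |z' - z|$, \emph{uniformly in $K$}. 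One applies the mean value theorem to $\Psi(a, b, y) := a^{p/2}\rho_a(b-y)$, whose derivatives are pointwise dominated by $C(n,p)\,a^{p/2-1}\rho_a(b-y)(1 + |y-b|^2/a)$; combined with the standard Gaussian moment bound $|y-b|^2\rho_a(b-y) \le C(n)\,a\,\rho_{2a}(b-y)$, the difference is controlled by a sum of terms of the form $a^{-1}\,\Phi^{\delta,K}_{(t-s+a,\, b)}(a; u_K)$ and $(2a)^{-1}\,\Phi^{\delta,K}_{(t-s+2a,\, b)}(2a; u_K)$. By the monotonicity formula (Theorem \ref{th:monotonicity-formula}), each such quantity is bounded above by $\Phi^{\delta,K}$ at the maximal scale, which is in turn bounded by the assumed Gaussian-weighted initial energy $V$ uniformly in $K$.

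Given the uniform Lipschitz estimate, the conclusion is immediate: choose $|z'-z|$ small enough that $C(n,p,s,V)\cdot|z'-z| \le \eta/2$. Then $\Phi^{\delta,K_j}_{z'}(\tilde s; u_{K_j}) \le \eps_0 - \eta/2$ for every $j$, and hence $\liminf_K \Phi^{\delta,K}_{z'}(\tilde s; u_K) < \eps_0$, placing $z'$ in $\cS^c$. The principal obstacle is the uniform-in-$K$ nature of this continuity: pointwise continuity of $\Phi^{\delta,K}_\cdot(s; u_K)$ for each fixed $K$ is trivial from smoothness, but it does not survive the $\liminf_K$ operation without a quantitative rate. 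The device of shifting $s \mapsto \tilde s$ converts the problem into a heat-kernel comparison whose constant depends on $u_K$ only through the $K$-uniform Gaussian envelope $V$, which is exactly what we have available.
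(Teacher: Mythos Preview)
Your plan is essentially the paper's proof: show the complement is open, shift the scale so that $t'-\tilde s = t-s$ to freeze the time slice, reduce to a heat-kernel comparison, and absorb the resulting error into a $\Phi$ at a larger scale, which is bounded by $V$ via monotonicity uniformly in $K$. The only cosmetic difference is that the paper bounds the kernel difference directly by $\omega(z')\rho_{2s}(x-y)$ rather than via the mean value theorem; note that in your MVT step the intermediate point $(a^*,b^*)$ depends on $y$, so the resulting integral is not literally $\Phi^{\delta,K}_{(t-s+a^*,b^*)}(a^*)$---you need one more line dominating $\rho_{a^*}(b^*-y)$ by $C(n)\rho_{Cs}(x-y)$ for $(a^*,b^*)$ in a fixed compact range, after which the argument closes exactly as you indicate.
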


\begin{proof}
We will show that the complement of $\cS$ is open. To this end, fix a point $z = (t,x) \notin \cS$ and choose some small $s > 0$ for which 
\begin{equation}
\gamma := \liminf_{K \to \infty} \Phi^{\delta,K}_{z}(s; u_K) < \eps_0\,.
\end{equation}
Then fix a subsequence (still indexed by $K$ for simplicity) which realizes the limit inferior above. For $z' = (t',x')$ close to $z$ let us choose the unique $s' > 0$ satisfying $t'-s' = t-s$. Denoting the heat kernel by $\rho$, note that 
\begin{equation}
|\rho_{s'}(x',y) - \rho_s(x,y)| \le \omega(z') \rho_{2s}(x,y)
\quad \text{uniformly in } y\,,
\end{equation}
as in the proof of Theorem \ref{th:eps-regularity}, where $\omega(z') > 0$ is a quantity that tends to zero as $z' \to z$. Comparing the monotone quantity $\Phi^{\delta,K}$ at the two points, we have 
\begin{align}
\left| (s/s')^{p/2} \Phi^{\delta,K}_{z'}(s'; u_K) - \Phi^{\delta,K}_{z}(s; u_K) \right| 
& = s^{p/2} \left| H_{s'} |\nabla u_{t'-s'}|^p_{\delta,K}(x') - H_{s} |\nabla u_{t-s}|^p_{\delta,K}(x) \right| \notag\\
& \le s^{p/2} \int |\nabla u_{t-s}(y)|_{\delta,K}^p |\rho_{s'}(x',y) - \rho_s(x,y)| \dd y \notag\\
& \le \omega(z') \cdot s^{p/2} \int |\nabla u_{t-s}(y)|_{\delta,K}^p \rho_{2s}(x,y) \dd y \notag\\
& = 2^{-p/2} \omega(z') \cdot \Phi^{\delta,K}_{(t+s,x)}(2s; u_{K})
\ale V(t+s,x) \omega(z')\,.
\end{align}
The calculation above used the equality $t'-s' = t-s$ and exploited monotonicity of $\Phi^{\delta,K}$ together with Gaussian bounds on $u_{K,0}$. The resulting upper bound may depend on $z$, but this is not relevant here. Recalling that $s' \to s$ as $z' \to z$, we can choose a neighborhood $U$ of $z$ (independently of $K$) so that $z' \in U$ implies $\Phi^{\delta,K}_{z'}(s'; u_K) \le \frac{\gamma+\eps_0}{2}$, and in particular $z' \notin \cS$. 

This shows that the complement of $\cS$ is open, which means that $\cS$ is closed. 
\end{proof}

As a step towards $(n+2-p)$-dimensional Minkowski content estimates, it will be useful to compare the local scale-invariant energy $r^{p-n} \int_{B_r} |\nabla u|_{\delta,K}^p$ and the monotone scale-invariant energy $\Phi^{\delta,K}(s)$. A one-sided estimate follows simply by restricting the integral defining $\Phi^{\delta,K}(s)$ to a ball. In the opposite direction, one has to deal with the Gaussian tail, which results in a small contribution at a larger scale. 

\begin{lemma}
\label{lem:local-global-energy-comparison}
Choose a point $z_0 = (t_0,x_0)$ satisfying $0 < T_0 \le t_0 \le T_1$. Let $0 < r < 1$ and $0 < \eta < 1$ be such that $s := (\eta r)^2$ is smaller than $t_0$. Then 
\begin{equation}
\label{eq:local-global-energy-comparison}
\Phi^{\delta,K}_{(t_0,x_0)}(s) \le (\eta r)^{p-n} \int_{B_r(x_0)} |\nabla u_{t_0-s}|_{\delta,K}^p \dd y 
+ C(n) \eta^{-n} e^{-\frac{1}{8\eta^2}} \Phi_{(t_0+r^2,x_0)}(s+r^2)\,,
\end{equation}
and in particular, one can choose $\eta(n,p,\eps_0,T_0,T_1,V) > 0$ so that 
\begin{equation}
\Phi^{\delta,K}_{z_0}(s) \le C(n,p,\eps_0,T_0,T_1,V) \cdot r^{p-n} \int_{B_r(x_0)} |\nabla u_{t_0-s}|_{\delta,K}^p \dd y 
+ \frac{\eps_0}{4}\,.
\end{equation}
\end{lemma}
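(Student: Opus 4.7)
The plan is to split the Gaussian integral defining $\Phi^{\delta,K}_{(t_0,x_0)}(s)$ into near and far contributions relative to $B_r(x_0)$. Write
\begin{equation*}
\Phi^{\delta,K}_{z_0}(s) = s^{p/2} \int_{B_r(x_0)} \rho_s(x_0-y) |\nabla u_{t_0-s}(y)|_{\delta,K}^p \dd y + s^{p/2} \int_{\R^n \setminus B_r(x_0)} \rho_s(x_0-y) |\nabla u_{t_0-s}(y)|_{\delta,K}^p \dd y.
\end{equation*}
The near contribution is handled by the pointwise bound $\rho_s \le (4\pi s)^{-n/2}$, giving $s^{p/2} \rho_s \le (4\pi)^{-n/2} s^{(p-n)/2} = (4\pi)^{-n/2}(\eta r)^{p-n}$, which yields the first term on the right-hand side of \eqref{eq:local-global-energy-comparison} (the benign constant $(4\pi)^{-n/2} < 1$ can be absorbed).

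For the far contribution, the key observation is a pointwise ratio estimate between $\rho_s$ and $\rho_{s+r^2}$ on $\cur{|x_0 - y| \ge r}$. A direct calculation gives
\begin{equation*}
\frac{\rho_s(z)}{\rho_{s+r^2}(z)} = \ton{\frac{s+r^2}{s}}^{n/2} \exp\ton{-\frac{|z|^2 r^2}{4s(s+r^2)}}.
\end{equation*}
Substituting $s = (\eta r)^2$ and $|z| \ge r$, the exponent is bounded above by $-1/(4\eta^2(1+\eta^2))$, which for $\eta < 1$ is at most $-1/(8\eta^2)$, while the polynomial prefactor is at most $2^{n/2} \eta^{-n}$. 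Combining with the trivial bound $s^{p/2} \le (s+r^2)^{p/2}$, the far contribution is controlled by $C(n) \eta^{-n} e^{-1/(8\eta^2)} \Phi_{(t_0+r^2,x_0)}(s+r^2)$, completing \eqref{eq:local-global-energy-comparison}.

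For the second, refined inequality, the plan is to invoke monotonicity of $s \mapsto \Phi^{\delta,K}_{(t_0+r^2,x_0)}(s)$ to dominate $\Phi_{(t_0+r^2,x_0)}(s+r^2)$ by $\Phi_{(t_0+r^2,x_0)}(t_0+r^2)$. The latter quantity is then bounded uniformly in $K$ and in $x_0$ by a constant depending only on $n,p,T_0,T_1,V$ (and the fixed $\delta$), using the assumed Gaussian-weighted energy bound together with the crude comparison $|\nabla u|_{\delta,K}^p \ale_p |\nabla u|_{0,K}^p + \delta^p$ and the fact that $t_0 + r^2 \in [T_0, T_1 + 1]$. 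Since $\eta^{-n} e^{-1/(8\eta^2)} \to 0$ as $\eta \to 0^+$, choosing $\eta$ sufficiently small -- depending on the indicated parameters -- forces the far contribution to be at most $\eps_0 / 4$. No essential obstacle is anticipated; the argument reduces to a short Gaussian computation coupled with the monotonicity formula already established in Theorem \ref{th:monotonicity-formula}.
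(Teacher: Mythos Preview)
Your argument is correct and matches the paper's proof essentially step for step: the same near/far split of the Gaussian integral, the same ratio computation $\rho_s/\rho_{s+r^2}$ with the bound $2^{n/2}\eta^{-n}e^{-1/(8\eta^2)}$ outside $B_r$, and the same appeal to monotonicity plus the Gaussian-weighted initial-energy bound $V$ to absorb the tail term. The only cosmetic difference is that the paper packages the energy bound as $\Phi_{(t_0+r^2,x_0)}(s+r^2) \le V(t_0+r^2,x_0)$ directly rather than unpacking $|\nabla u|_{\delta,K}^p \ale_p |\nabla u|_{0,K}^p + \delta^p$.
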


\begin{proof}
First note that the heat kernel $\rho$ satisfies 
\begin{equation}
\rho_s(y) \le (\eta r)^{-n} \text{ in } B_r\,, \qquad
\rho_s(y) \ale_n \eta^{-n} e^{-\frac{1}{8\eta^2}} \rho_{s+r^2}(y) \text{ outside } B_r.
\end{equation}
The first of these inequalities is a direct consequence of the formula $\rho_s(y) = (4 \pi s)^{-n/2} \exp(-|y|^2/4s)$, and the second relies on the following comparison: 
\begin{align}
\frac{\rho_s(y)}{\rho_{s+r^2}(y)} 
& = \left( \frac{s}{s+r^2} \right)^{-n/2} \exp \left( -\frac{|y|^2}{4s} + \frac{|y|^2}{4(s+r^2)} \right) \notag\\
& = \left( 1+\eta^{-2} \right)^{n/2} \exp \left( -\frac{|y|^2}{4r^2} \cdot \frac{1}{\eta^2(1+\eta^2)} \right) \notag\\
& \le 2^{n/2} \eta^{-n} e^{-\frac{1}{8\eta^2}} 
& \text{for } |y| \ge r, \ \eta \le 1\,.
\end{align}
As a consequence of these two inequalities, we have 
\begin{equation}
H_s |\nabla u_{t_0-s}|_{\delta,K}^p (x_0)
\le (\eta r)^{-n} \int_{B_r(x_0)} |\nabla u_{t_0-s}|_{\delta,K}^p \dd y 
+ C(n) \eta^{-n} e^{-\frac{1}{8\eta^2}} H_{s+r^2} |\nabla u_{t_0-s}|_{\delta,K}^p (x_0)\,.
\end{equation}
Multiplying both sides by $s^{p/2} = (\eta r)^p$, we clearly obtain \eqref{eq:local-global-energy-comparison} (one needs to bound $s^{p/2}$ by $(s+r^2)^{p/2}$ in the last term).

For the second statement, note that $\Phi_{(t_0+r^2,x_0)}(s+r^2)$ is bounded by $V(t_0+r^2,x_0)$ due to monotonicity and Gaussian bounds on $u_{K,0}$. If $t_0 \in [T_0,T_1]$, one can express this bound in terms of $V$ and $T_0,T_1$ only. Now, since $\eta^{-n} e^{-\frac{1}{8\eta^2}} \to 0$ as $\eta \to 0$, it is possible to choose $\eta > 0$ small enough so that the last term does not exceed $\eps_0/4$. Of course, the contribution of $\eta^{p-n}$ in the first term of the r.h.s then becomes a large constant.
\end{proof}

We will say that the set $E \subset \R^{n + 1}$ has (parabolic) $k$-Minkowski content bounded by $C$ (or simply that $\Min_k(E) \le M$), if for small $r > 0$, any disjoint collection of $N$ (parabolic) balls $Q_r(z_i)$ centered at $z_i \in E$ has $N \cdot r^k \le M$, where $M$ does not depend on $r$. Note that this is essentially stronger than a Hausdorff measure bound $\cH^{k}(E) \le M$. 

It is easy to see that this condition implies (at least for compact $E$) a bound on the parabolic $r$-neighborhood of $E$: $|Q_r(E)| \ale_n M r^{n + 2 - k}$. Let us briefly recall that 
\begin{equation}
Q_r(E) := \bigcup_{z \in E} Q_r(z) = \left\{ z \mid d_P(z,E) < r \right\}\,.
\end{equation}
To see why the bound holds, one covers $Q_r(E)$ by balls $Q_r(x)$ (with $x \in E$), then chooses a Vitali subcovering: $Q_r(E) \subseteq \bigcup Q_{5r}(x_i)$ with $Q_r(x_i)$ disjoint. By the previous condition, the number $N$ of balls in this subcovering has to satisfy $N \cdot r^k \le M$, and in consequence 
\begin{equation}
|Q_r(E)| 
\le \left| \bigcup Q_{5r}(x_i) \right|
\le \sum_i |Q_{5r}(x_i)|
\ale_n N \cdot r^{n + 2} 
\le M r^{n + 2 - k}\,.
\end{equation}
In fact, these two definitions are equivalent (up to a constant) -- the opposite inequality follows from the fact that any disjoint collection of balls $Q_r(x_i)$ centered at $x_i \in E$ lies inside $Q_r(E)$, and so $2 \omega_n N r^{n + 2} \le |Q_r(E)|$. 

\begin{lemma}
\label{lem:n+2-p-content}
The concentration set $\cS$ has locally bounded $(n + 2 - p)$-Minkowski content.
\end{lemma}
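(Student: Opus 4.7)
The plan is to use the local--global energy comparison of Lemma~\ref{lem:local-global-energy-comparison} together with the monotonicity formula to convert the defining lower bound on $\Phi^{\delta,K}_{z_i}$ at each $z_i \in \cS$ into a spacetime energy lower bound on a small parabolic region around $z_i$. Summing these estimates over a disjoint collection and invoking bounded overlap, together with a uniform global energy bound inherited from the Gaussian hypothesis on the initial data, will yield the Minkowski content estimate.

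More concretely, I would fix a compact region $\Omega = [T_0,T_1] \times B_R$ and let $\eta = \eta(n,p,\eps_0,T_0,T_1,V) \in (0,1)$ be the constant supplied by Lemma~\ref{lem:local-global-energy-comparison}. Take any disjoint family $\{Q_r(z_i)\}_{i=1}^N$ with $z_i = (t_i,x_i) \in \cS \cap \Omega$ and $r$ small enough that $(\eta r)^2 < T_0/2$. Since the family is finite and $\liminf_K \Phi^{\delta,K}_{z_i}(s) \ge \eps_0$ for each fixed $s > 0$, monotonicity of $s \mapsto \Phi^{\delta,K}_{z_i}(s)$ allows one to choose a single large index $K$ for which $\Phi^{\delta,K}_{z_i}((\eta r')^2) \ge \eps_0/2$ holds simultaneously for every $i$ and every $r' \in [r,2r]$. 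Applying Lemma~\ref{lem:local-global-energy-comparison} at each scale $r'$ and using $(r')^{n-p} \approx r^{n-p}$ and $B_{r'} \subset B_{2r}$ then gives
\begin{equation*}
r^{n-p} \ale \int_{B_{2r}(x_i)} |\nabla u_K|^p_{\delta,K}\bigl(t_i - (\eta r')^2, y\bigr) \dd y \qquad \text{for every } r' \in [r,2r].
\end{equation*}

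To promote this spatial estimate to a spacetime one, I would integrate the inequality above in $r' \in [r,2r]$ and change variables $\tau = t_i - (\eta r')^2$. Since $r' \ge r$ in this range, the Jacobian factor is bounded above by $(2\eta^2 r)^{-1}$, and the outcome is $r^{n+2-p} \ale_\eta \int_{R_i} |\nabla u_K|^p_{\delta,K}$, where $R_i := [t_i - 4\eta^2 r^2, t_i - \eta^2 r^2] \times B_{2r}(x_i) \subset Q_{2r}(z_i)$. Disjointness of $\{Q_r(z_i)\}$ implies, via the standard Vitali volume comparison, that the multiplicity $\sum_i \ind_{Q_{2r}(z_i)}$ is bounded by a dimensional constant. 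Summing the previous display over $i$ therefore yields
\begin{equation*}
N r^{n+2-p} \ale \int_{\Omega'} |\nabla u_K|^p_{\delta,K},
\end{equation*}
where $\Omega'$ is a fixed enlargement of $\Omega$ containing all the $Q_{2r}(z_i)$. The right-hand side is controlled uniformly in $K$ by the Gaussian hypothesis together with monotonicity: the bound $\Phi^{\delta,K}_{(t,x)}(s) \le V(t,x)$ together with $H_s|f|(x) \gtrsim s^{-n/2} \int_{B_{\sqrt s}(x)} |f|$ yields a local $L^p$-bound $\int_{B_1(x)} |\nabla u_K|^p_{\delta,K}(\tau,\cdot) \le C$ uniform in $K$, and covering $\Omega'$ by unit parabolic cylinders and integrating in time then produces $\int_{\Omega'} |\nabla u_K|^p_{\delta,K} \le C(\Omega,V,\delta)$. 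Combining the two displays gives $N \le C r^{-(n+2-p)}$, which is the required bound on the local $(n+2-p)$-Minkowski content.

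The main technical subtlety I anticipate is the initial uniformization step: the definition of $\cS$ provides only a pointwise $\liminf$ lower bound, and the argument requires $\Phi^{\delta,K}_{z_i}((\eta r')^2) \ge \eps_0/2$ to hold simultaneously across the finite collection $\{z_i\}$ and across the scale range $r' \in [r,2r]$. This is handled by first observing that monotonicity of $s \mapsto \Phi^{\delta,K}_{z_i}(s)$ at fixed $K$ upgrades a lower bound at the smallest scale $(\eta r)^2$ to all larger scales, and then using the finiteness of the collection to pick one $K$ sufficient for all $z_i$ at that smallest scale. Once this uniformization is in place, the remaining manipulations (integration in $r'$, Vitali bounded-overlap, and the $K$-uniform global energy bound) are essentially routine.
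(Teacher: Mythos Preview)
Your proposal is correct and follows essentially the same strategy as the paper: both convert the definitional lower bound on $\Phi$ at points of $\cS$ into a local spacetime energy lower bound via Lemma~\ref{lem:local-global-energy-comparison} and a time-averaging step, then sum and invoke the $K$-uniform global energy bound. The only notable difference is that the paper averages over $s'\in[s/2,s]$ at \emph{fixed} spatial radius $r$, so that the resulting slabs $[t_i-s,\,t_i-s/2]\times B_r(x_i)$ sit inside the disjoint balls $Q_r(z_i)$ and no overlap argument is needed; your choice to vary $r'\in[r,2r]$ forces the use of $B_{2r}(x_i)$ and hence the Vitali bounded-multiplicity step, which is correct but slightly less economical.
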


\begin{remark}
\label{rmk:Misawa-rmk}
This estimate can be viewed as an improvement of the Hausdorff measure estimates of \cite[Thm.~5]{Mis19} (albeit for a different flow). This last paper considers the $\cH^{m}$ measure with respect to the metric $d_{\gamma_0}((t,x),(s,y)) := |t - s|^{1/\gamma_0} + |x - y|$ for any $2 < \gamma_0 < p$, while we consider the $\cH^{m + 2 - p}$ measure w.r.t the standard parabolic metric $d_P((t,x),(s,y)) = \max(|t - s|^{1/2}, |x - y|) \approx d_2((t,x),(s,y))$. Let us note that by a covering argument -- using that $Q_r^{d_P}$ can be covered by $\approx r^{2 - \gamma_0}$ translated copies of $Q_r^{d_{\gamma_0}}$ -- the latter measure controls the former, even for $\gamma_0 = 2,p$. Thus, Lemma \ref{lem:n+2-p-content} achieves the optimal local finiteness of $\cH^m$-measure w.r.t the metric $d_p$ for the concentration set as discussed in \cite[Rmk. Pg.~6]{Mis19}, sharpening the estimate of \cite[Thm.~5]{Mis19}.
\end{remark}

\begin{proof}
We will show that for any $0 < T_0 < T_1$ and any compact $L$, the part of the singular set 
\begin{equation}
\cS' := \cS \cap \left( [T_0,T_1] \times L \right)
\end{equation}
has finite $(n + 2 - p)$-Minkowski content. To this end, consider a disjoint collection of $N$ parabolic balls $Q_r(z_i)$ centered at $z_i = (t_i,x_i) \in \cS'$. With $\eps_0 > 0$ as in $\eps$-regularity (Theorem \ref{th:eps-regularity}) we choose small $\eta > 0$ as in Lemma \ref{lem:local-global-energy-comparison} and fix $s := (\eta r)^2$. For each $i$, by definition we have $\liminf_{K \to \infty} \Phi^{\delta,K}_{z_i}(s; u_K) \ge \eps_0$. Since there are only finitely many $i$'s, we may choose $K$ large enough so that 
\begin{equation}
\Phi^{\delta,K}_{z_i}(s; u_K) \ge \frac{\eps_0}{2} 
\quad \text{for } i=1,2,\ldots,N.
\end{equation}
From now on, we will fix such $K$ and denote $u_K$ by $u$ to avoid clutter. Applying Lemma \ref{lem:local-global-energy-comparison}, we can get rid of the Gaussian tail and obtain a similar lower bound for the local scale-invariant energy: 
\begin{align}
\frac{\eps_0}{2}
& \le \Phi^{\delta,K}_{z_i}(s; u_K) \notag\\
& \le C(n,p,\cN,T_0,T_1,V) \cdot r^{p-n} \int_{B_r(x_i)} |\nabla u_{t_0-s}|_{\delta,K}^p \dd y 
+ \frac{\eps_0}{4}\,, \\
\frac{\eps_0}{4}
& \ale_{n,p,\cN,T_0,T_1,V} r^{p-n} \int_{B_r(x_i)} |\nabla u_{t_0-s}|_{\delta,K}^p \dd y\,. 
\end{align}
The implicit constant in the final inequality depends on Gaussian bounds on $u_{K,0}$; since our attention is restricted to $\cS'$, the constant is uniform for all considered points. 

It is desirable to obtain such a bound also for an integral over spacetime. To this end, we repeat the above reasoning for each $s' \in [s/2,s]$ and the same value of $r$ (which means that $\eta$ has to vary slightly), obtaining a lower bound for $|\nabla u_{t_0-s'}|_{\delta,K}^p$. Averaging all these inequalities, we have 
\begin{equation}
\eps_0 
\ale_{n,p,\cN,T_0,V} r^{p-n} \fint_{s/2}^{s} \int_{B_r(x_i)} |\nabla u_{t_0-s'}|_{\delta,K}^p \dd y \dd s'
\ale_\eta r^{p-n-2} \int_{[t_0-s,t_0-s/2] \times B_r(x_i)} |\nabla u|_{\delta,K}^p  \dd y \dd t\,.
\end{equation}
The parabolic ball $Q_r(z_i)$ contains $[t_0-s,t_0-s/2] \times B_r(x_i)$, so the integral over the ball is even larger. Finally, we can sum up the contribution of all balls and obtain 
\begin{equation}
N \eps_0 \cdot r^{n+2-p} \ale_{n,p,\cN,T_0,V} \int_{\bigcup Q_r(z_i)} |\nabla u|_{\delta,K}^p  \dd y \dd t 
\le \int_{[T_0,T_1]} \int_{L} |\nabla u|_{\delta,K}^p  \dd y \dd t\,. 
\end{equation}
For each $t \in [T_0,T_1]$ the inner integral can be bounded by the scale-invariant energy and thus also by the initial energy. In result, the integral is bounded uniformly in $K$. This finishes the proof of the local Minkowski content bound $\Min_{n+2-p}(\cS') \le C(n,p,\cN,V,T_0,T_1,L)$. 
\end{proof}

\subsection{Convergence of the $(\delta,K)$-flow over the whole domain}

We have already showed that the sequence of approximate solutions $u_K$ converges (up to a subsequence) to a solution of the $\delta$-flow \eqref{eq:reg-flow} in $\eps$-regularity regions. It is not hard to establish some convergence of weak type on the whole domain, but due to non-linearity of the problem, it is not clear whether the limit satisfies the equation, even in a weak sense. A~natural approach is to test \eqref{eq:reg-flow} with an additional cut-off function which vanishes near $\cS$. Using content estimates for (a~compact portion of) $\cS$, we can indeed show that cutting off is not too expensive. In result, we obtain a weak formulation of \eqref{eq:reg-flow} on the whole domain. 

\begin{theorem}
\label{th:HMF-distr}
Let $u_K$ be solutions to $\eqref{eq:flow-delta-K}_K$ for $\delta > 0$ fixed, and assume that their initial maps $u_{K,0}$ satisfy a~uniform bound $\int_{\R^n} (|\nabla u_{K,0}|^2 + F_K(u_{K,0}))^{p/2} \le V$.
Then up to taking a subsequence, $u_K \wto u$ weakly in $W_\Loc^{1,2}$ (strongly in $L^2_\Loc$) and $\nabla u_K \wto \nabla u$ weakly in $L^p_\Loc$, where $u$ solves the $\delta$-flow \ref{eq:reg-flow} distributionally.
\end{theorem}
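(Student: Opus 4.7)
My plan proceeds in four steps.

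\textbf{Step 1 (Weak compactness).} From the uniform initial bound $\int (|\nabla u_{K,0}|^2 + F_K(u_{K,0}))^{p/2} \le V$ and the monotonicity formula (Theorem~\ref{th:monotonicity-formula}), I first derive uniform local bounds on $\Phi^{\delta,K}_z(s; u_K)$ over compact subsets of spacetime, as already used in Section~\ref{sec:K-Limit}. Combined with the local energy inequality (Lemma~\ref{lem:loc-energy-ineq}), this gives uniform $L^p_\Loc$ bounds on $|\nabla u_K|_{\delta,K}$ and uniform $L^2_\Loc$ bounds on $|\nabla u_K|_{\delta,K}^{(p-2)/2} \pl_t u_K$; since $|\nabla u_K|_{\delta,K} \ge \delta$, also $\pl_t u_K$ is bounded in $L^2_\Loc$. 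Passing to a subsequence yields $\nabla u_K \wto \nabla u$ in $L^p_\Loc$, $\pl_t u_K \wto \pl_t u$ in $L^2_\Loc$, and by Aubin--Lions, $u_K \to u$ strongly in $L^2_\Loc$ (and pointwise a.e.).

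\textbf{Step 2 (Strong convergence off $\cS$).} Let $\cS$ denote the concentration set of the previous subsection, which by Lemma~\ref{lem:n+2-p-content} is closed with locally bounded $(n+2-p)$-Minkowski content. For any $z_0 \notin \cS$, the definition (together with the closedness argument) yields a scale $s>0$ such that $\Phi^{\delta,K}_{z'}(s; u_K) < \eps_0$ for all $z'$ in a parabolic neighborhood of $z_0$ and $K$ large; Theorem~\ref{th:eps-regularity} and parabolic rescaling then bound $|\nabla u_K|_{\delta,K}$ uniformly on a smaller neighborhood. Lemma~\ref{lem:eps-reg-HMF-distr}, applied via a countable cover and diagonal extraction, delivers strong $L^q_\Loc$ convergence $\nabla u_K \to \nabla u$ for every $q<\infty$ on $((0,\infty)\times\R^n)\setminus \cS$, and identifies $u$ there as a distributional solution of \eqref{eq:reg-flow}.

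\textbf{Step 3 (Cutoff and limit).} Given a test function $\phi \in C_c^\infty((0,\infty)\times\R^n;\R^d)$, for each $r>0$ I choose a cutoff $\chi_r$ with $\chi_r \equiv 0$ on $Q_{r/2}(\cS)$, $\chi_r \equiv 1$ off $Q_r(\cS)$, $|\nabla \chi_r| \ale 1/r$, and $|\pl_t \chi_r| \ale 1/r^2$. Multiplying \eqref{eq:flow-delta-K} by $|\nabla u_K|^{p-2}_{\delta,K}$ to enter divergence form (as in Lemma~\ref{lem:eps-reg-HMF-distr}) and pairing against $\phi \chi_r$ produces a weak identity whose integrands are supported away from $\cS$; by Step~2 I can pass $K\to\infty$ for fixed $r$. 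The set $\cS$ is Lebesgue-null, so $\chi_r \to 1$ a.e.\ as $r\to 0$; the integrability $|\nabla u|^{p-2}_\delta \nabla u \in L^{p/(p-1)}_\Loc$ together with the bound on $|\nabla u|^{p-2}_\delta \pl_t u$ allows the $\chi_r$-weighted terms to pass by dominated convergence, yielding the desired weak formulation of \eqref{eq:reg-flow}.

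\textbf{The main obstacle.} The delicate terms are those involving $\nabla \chi_r$, with integrand of order $|\nabla u|^{p-1}/r$ supported on $Q_r(\cS) \cap \supp(\phi)$---a set of measure $\ale r^p$ by the Minkowski estimate. By H\"older,
\begin{equation*}
\int |\nabla u|^{p-1} |\nabla \chi_r| |\phi|
\ale \frac{1}{r} \cdot r \cdot \|\nabla u\|^{p-1}_{L^p(Q_r(\cS) \cap \supp \phi)}
\to 0 \quad \text{as } r\to 0,
\end{equation*}
by absolute continuity of integration; a similar calculation handles the $\pl_t \chi_r$-term. The precise matching between the $1/r$ blow-up of $|\nabla \chi_r|$ and the $r^p$ decay of the measure of its support---possible precisely because $\cS$ has parabolic codimension at least $p$---is the crux of the argument, and any weaker codimension bound would fail to close.
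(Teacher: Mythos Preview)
Your proposal is correct and follows essentially the same route as the paper: weak compactness via monotonicity and the local energy inequality, strong convergence off the concentration set $\cS$ via $\eps$-regularity and Lemma~\ref{lem:eps-reg-HMF-distr}, and then a cutoff argument exploiting the $(n+2-p)$-Minkowski content bound on $\cS$ to show that the cross term $\int \phi\,|\nabla u|_\delta^{p-2}\nabla u\cdot\nabla\chi_r\to 0$ via H\"older and absolute continuity. Two small remarks: the $\pl_t\chi_r$ term you mention never actually arises, since the weak formulation retains $|\nabla u|_\delta^{p-2}\pl_t u\in L^2_\Loc$ and there is no integration by parts in time; and where you invoke the closedness argument to propagate $\eps$-smallness of $\Phi$ to nearby points, the paper instead applies $\eps$-regularity only at $z_0$ and then uses Lemma~\ref{lem:eps-reg-for-later-time} to push the gradient bound forward in time---both routes are valid.
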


\begin{remark}
The main application of this Lemma is when the $u_K$ have the same initial data $u_0$ taking values in $\cN$. In this case, the energy assumptions simplify to $\nabla u_0 \in L^p(\R^n)$. Moreover, it is natural to ask whether the limiting map $u$ preserves the initial data $u_0$ -- we address this question in Lemma \ref{lem:preserving-initial-data}. 
\end{remark}

\begin{proof}
Since the conclusions of the lemma are local, we may as well work on a parabolic cylinder $P_R(z_0) \subset (0,\infty) \times \R^{n}$, where $R > 0$ and $z_0 = (t_0,x_0) \in [4R^2, \infty) \times \R^{n}$. In this region, we have by the (rescaled) local energy inequality (Lemma \ref{lem:loc-energy-ineq})
\begin{align}
\int_{P_R(z_0)}|\partial_t u_K|^2 
&\leq \delta^{2 - p}\int_{P_R(z_0)}|\nabla u_K|_{\delta,K}^{p - 2}|\partial_t u_K|^2 \notag\\
&\ale_{\delta, n,p} R^{- 2} \int_{P_{2R}(z_0)} |\nabla u_K|^p_{\delta,K}\,,
\end{align}
and by the monotonicity formula (Theorem \ref{th:monotonicity-formula})
\begin{align}
\left(\int_{P_{2R}(z_0)}|\nabla u_K|^2 \right)^{p/2} 
&\ale_{R,p,n} \int_{P_{2R}(z_0)} |\nabla u_K|_\delta^p \notag\\
&= \int_{t_0-4R^2}^{t_0} \int_{B_{2R}(x_0)} |\nabla u_K|^p_\delta\,dyds \notag\\
&\ale_n \int_{t_0-4R^2}^{t_0}R^{n - p}\Phi^{\delta,K}_{(t_0 + R^2, x)}(u_K, t_0 + R^2- s) ds \notag\\
&\leq R^{n - p}\Phi^{\delta,K}_{(t_0 + R^2, x)}(u_K,t_0 + R^2)\int_{t_0-4R^2}^{t_0} ds \notag\\
&\ale_{R,n,p} V(t_0 + R^2, x)\,.
\end{align}
By Rellich-Kondrachov compactness, we can pick a subsequence of the $K \in (0,1)$ (not relabeled) so that $(\partial_t u_K,\nabla u_K) \wto (\partial u,\nabla u)$ weakly in $L^2(P_R(z_0))$, $u_K \to u$ strongly in $L^2(P_R(z_0))$. Moreover, since $\|\nabla u_K\|_{L^p(\Omega_R)} \ale_{p,n,R,t_0} V$ by the previous inequalities, we can assume that $\nabla u_K \wto \nabla u$ weakly in $L^p(P_R(z_0))$.

If $z = (t,x)$ is not in the energy concentration set $\cS$, there is a scale $s > 0$ and a subsequence $K_i \to \infty$ such that $\Phi_{(t,x)}^{\delta,K_i}(s_z,u_{K_i}) < \eps_0$. In particular, by the $\eps$-regularity theorem (Theorem \ref{th:eps-regularity}) we have a uniform bound $|\nabla u_{K_i}|_{\delta,K_i} \le M$ on some small parabolic cylinder $P_{\gamma s}(z)$ ($M,\gamma$ may depend on $z$ but not on $i$). By Lemma \ref{lem:eps-reg-for-later-time}, a possibly weaker uniform bound holds on some neighborhood of $z$. Hence, Lemma \ref{lem:eps-reg-HMF-distr} implies that $u$ solves the $\delta$-flow \eqref{eq:reg-flow} weakly on this neighborhood (note that the limit obtained in Lemma \ref{lem:eps-reg-HMF-distr} coincides with $u$). Since $z \notin \cS$ was arbitrary, we conclude that $u$ solves the $\delta$-flow \eqref{eq:reg-flow} weakly on $P_R(z_0) \setminus \cS$. 

The last step is to make sure that the equation is satisfied on the whole domain, not only away from $\cS$. To this end, fix a test function $\phi \in C^\infty_c((0,\infty)\times \R^n, \R^d)$, let $L$ be its compact support and let $\cS' := \cS \cap L$. For a given $r > 0$, we will construct a cutoff function $\chi_r \in C^\infty_c(P_R(z_0))$ that satisfies $0 \leq \chi_r \leq 1$, $\chi_r \equiv 0$ on $Q_r(\cS')$, $\chi_r \equiv 1$ outside $Q_{2r}(\cS')$ and $|\nabla \chi_r| \ale r^{-1}$. 

Such a cutoff function can be constructed as follows. We first let $\eta_1(x) := \dist(x,(Q_{2r}(\cS'))^c)$, which is the parabolic distance to the complement of $Q_{2r}(\cS')$. This satisfies $0 \leq \eta_1 \leq 2r$, $\eta_1 \equiv 0$ outside $Q_{2r}(\cS')$ and $|\nabla \eta_1| \le 1$. Taking $\eta_2 := \max(\eta_1,r)$, we ensure that also $\eta_2 \equiv r$ in $Q_r(\cS')$. It remains to rescale and switch the values $0$ and $1$, i.e. consider $\eta_3 := 1-\eta_2/r$, and take a mollification of $\eta_3$ to obtain the desired~$\chi_r$. 

Now, since the \eqref{eq:reg-flow} is satisfied away from $\cS$, $\phi \chi_r$ can be used as a test function, leading to 
\begin{align}
0 &= \int \phi\chi_r(|\nabla u|_\delta^{p - 2}\partial_t u - \dv(|\nabla u|_\delta^{p - 2} \nabla u) - |\nabla u|_\delta^{p - 2}A_u(\nabla u,\nabla u)) \notag\\
&= \int \chi_r(\phi|\nabla u|_\delta^{p - 2}\partial_t u + |\nabla u|_\delta^{p - 2} \nabla u \cdot \nabla \phi - \phi|\nabla u|_\delta^{p - 2}A_u(\nabla u,\nabla u)) \notag\\
& \pheq + \int \phi|\nabla u|_\delta^{p - 2}\nabla u\cdot \nabla \chi_r\,.
\end{align}
We now consider the limit $r \to 0$. By dominated convergence, the first line of the r.h.s tends to 
\eqref{eq:reg-flow} paired with the test function $\phi$, so it remains to show that the second line tends to $0$. This is where the content estimate shines:
\begin{align}
\left|\int \phi|\nabla u|_\delta^{p - 2}\nabla u\cdot \nabla \chi_r\right| 
&\ale_{\phi} \int |\nabla u|_\delta^{p - 2}|\nabla u||\nabla \chi_r| \notag\\
&\ale r^{-1}\int_{\supp\nabla \chi_r} |\nabla u|_\delta^{p - 1} \notag\\
&\leq r^{-1}|Q_{2r}(\cS')|^\frac{1}{p}\left(\int_{Q_{2r}(\cS')}|\nabla u|_\delta^p\right)^\frac{p - 1}{p} \notag\\
&\ale_{n,p,\cN, V,t_0,R} \left(\int_{Q_{2r}(\cS')}|\nabla u|_\delta^p\right)^\frac{p - 1}{p}\,,
\end{align}
where the last step employs the estimate $|Q_{2r}(\cS')| \ale_{n,p,\cN, V,t_0,R} r^p$ from Lemma \ref{lem:n+2-p-content}. Using this estimate again, we infer that $|Q_{2r}(\cS')| \to 0$ as $r \to 0$, and the r.h.s. tends to zero by absolute continuity of the Lebesgue integral. This finishes the proof that $u$ solves the $\delta$-flow \eqref{eq:reg-flow} weakly on $P_R(z_0)$. 
\end{proof}

The main interest in Theorem \ref{th:HMF-distr} is that it provides a construction for a solution of \eqref{eq:reg-flow} with given smooth initial data $u_0 \colon \R^n \to \cN$ (assuming that the approximating equations can be solved, see Remark \ref{rmk:flow-existence}). To this end, one needs to check that the initial data is preserved in the limiting process: 

\begin{lemma}
\label{lem:preserving-initial-data}
Fix $\delta > 0$ and a smooth initial map $u_0 \colon \R^n \to \cN$ with finite energy $\int_{\R^n} |\nabla u_0|^p < \infty$. Assume that $u_K \colon [0,\infty) \times \R^n \to \R^d$ are smooth solutions of $\eqref{eq:flow-delta-K}_K$. Given a convergent subsequence $u_k \to u$ as in Theorem \ref{th:HMF-distr}, the limiting map $u$ has the same initial data $u_0$ in the sense that $u_t \xrightarrow{t \to 0} u_0$ in $L^2_\Loc$. 
\end{lemma}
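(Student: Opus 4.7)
The plan is to prove a uniform (in $K$) Hölder-$1/2$ continuity in time for $u_K$ with values in $L^2(B_R)$, and then pass this estimate to the limit. The key ingredient is a localized energy inequality adapted to cylinders starting at the initial time $t=0$, which exploits the fact that the data $u_0$ lies in $\cN$ (so that $F_K(u_0) \equiv 0$ independently of $K$).

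Fix $T > 0$ and a ball $B_R \subset \R^n$, and let $\chi \in C^\infty_c(B_{2R})$ be a spatial cutoff with $\chi \equiv 1$ on $B_R$. Repeating the computation of Lemma \ref{lem:loc-energy-ineq} with this purely spatial $\chi^2$ and integrating in time from $0$ to $T$ (rather than over a fully interior cylinder), I would obtain
\begin{equation}
\tfrac{1}{p}\int |\nabla u_K(T)|^p_{\delta,K}\chi^2
+ \tfrac{1}{2}\int_0^T\!\!\int \chi^2 |\nabla u_K|^{p-2}_{\delta,K}|\pl_t u_K|^2
\le \tfrac{1}{p}\int |\nabla u_0|^p_{\delta,K}\chi^2
+ C\int_0^T\!\!\int |\nabla u_K|^p_{\delta,K}|\nabla \chi|^2\,.
\end{equation}
Because $u_0$ maps into $\cN$, the initial-data term reduces to $\int_{B_{2R}}(|\nabla u_0|^2+\delta^2)^{p/2}$, which is finite and independent of $K$. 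The last term is controlled uniformly in $K$ by the monotonicity formula together with the assumed bound $\int(|\nabla u_{K,0}|^2 + F_K(u_{K,0}))^{p/2} \le V$, exactly as in the proof of Theorem \ref{th:HMF-distr}. Using $|\nabla u_K|^{p-2}_{\delta,K} \ge \delta^{p-2}$ (valid for $p \ge 2$), this yields
\begin{equation}
\|\pl_t u_K\|_{L^2((0,T)\times B_R)} \le C(n,p,\cN,\delta,R,V,T)\,.
\end{equation}

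A direct Cauchy--Schwarz argument in time then gives, for $0 \le s \le t \le T$,
\begin{equation}
\|u_K(t,\cdot) - u_K(s,\cdot)\|_{L^2(B_R)}^2
\le (t-s)\int_s^t\!\!\int_{B_R}|\pl_\tau u_K|^2
\le C(t-s)\,,
\end{equation}
and in particular $\|u_K(t,\cdot) - u_0\|_{L^2(B_R)} \le Ct^{1/2}$ uniformly in $K$. By Theorem \ref{th:HMF-distr} combined with Fubini, I may extract a further subsequence along which $u_K(t,\cdot) \to u(t,\cdot)$ in $L^2(B_R)$ for almost every $t \in (0,T)$; passing to the limit in both estimates yields $\|u(t,\cdot) - u_0\|_{L^2(B_R)} \le Ct^{1/2}$ together with a Hölder-$1/2$ bound for $u$ on a dense set of times. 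The latter extends by density to a representative of $u$ lying in $C^{1/2}([0,T];L^2(B_R))$, which therefore satisfies $u_t \to u_0$ in $L^2(B_R)$ as $t \to 0^+$; taking $R$ arbitrary gives the claimed $L^2_\Loc$ convergence. The only mildly delicate point is that the pointwise-in-time strong convergence of $u_K$ holds only almost everywhere, but the uniform Hölder-$1/2$ estimate in time is precisely what secures the limit $u_t \to u_0$ along \emph{every} sequence $t \to 0^+$.
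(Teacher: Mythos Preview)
Your proposal is correct and follows essentially the same approach as the paper: a time-independent cutoff in the local energy inequality yields a $K$-uniform $L^2$ bound on $\pl_t u_K$ near $t=0$ (using that $F_K(u_0)=0$ and monotonicity), which gives uniform $C^{1/2}$-in-time continuity into $L^2_\Loc$ that survives the $K\to\infty$ limit along almost every time slice. The handling of the null set of times via the uniform H\"older estimate is also the same as in the paper.
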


\begin{proof}
Since the conclusion is local, let us fix a point $x_0 \in \R^n$. We shall use the local energy inequality on the cylinder $(0,1) \times B_1(x_0)$, but in a slightly modified form. We repeat the argument from Lemma \ref{lem:loc-energy-ineq} with a time-independent cut-off $\mathbbm{1}_{B_1(x_0)} \le \chi \le \mathbbm{1}_{B_2(x_0)}$. In such case, the counterpart of equation \eqref{eq:local-energy-ineq-main-eq} involves an additional integral over the initial time slice $t = 0$: 
\begin{gather}
\frac 1p \int |\nabla u^K_1|^p_{\delta,K} \chi
- \frac 1p \int |\nabla u^K_0|^p_{\delta,K} \chi
+ \frac 12 \int_{(0,1) \times \R^n} |\nabla u^K|^{p-2}_{\delta,K} |\pl_t u^K|^2 \chi
\le C(p,n) \int_{(0,1) \times B_2(x_0)} |\nabla u^K|^{p}_{\delta,K}\,,
\end{gather}
which leads to 
\begin{equation}
\int_{(0,1) \times B_1(x_0)} |\partial_t u^K|^2 
\ale_{n,p,\delta} 
\int_{B_2(x_0)} |\nabla u_0|^p_{\delta,K}
+ \int_{(0,1) \times B_2(x_0)} |\nabla u^K|^{p}_{\delta,K}\,. 
\end{equation}
The first term on the right does not depend on $K$. Using the monotonicity formula as in the previous lemma, the second term can be also bounded in terms of $u_0$. In consequence, $\pl_t u^K$ is uniformly bounded in $L^2(P_1(z_0))$, where $z_0 = (1,x_0)$. It follows from H\"older's inequality that 
\begin{equation}
\int_{B_1(x_0)} |u^K_t-u^K_s|^2 \dd x \le |t-s| \int_{[t,s] \times B_1(x_0)} |\pl_t u^K|^2 \dd t \dd x\,, 
\end{equation}
which means that $u^K$ is uniformly $\frac 12$-H\"older continuous as a map $t \mapsto u^K_t$ from $(0,1)$ to $L^2(B_1(x_0))$. In particular, $\| u^K_t - u_0 \|_{L^2} \ale_{n,p,\delta,u_0} t^{1/2}$ with a uniform constant. 

As in the previous lemma, an application of Rellich--Kondrachov leads to a subsequence $u^K$ convergent strongly in $L^2(P_1(z_0))$. If we choose a further subsequence, we have $u^K_t \to u_t$ in $L^2(B_1(x_0))$ for all $t$ in a set $E \subseteq (0,1)$ of full measure. It now follows from previous observations and the triangle inequality that $\| u_t - u_0 \|_{L^2} \ale_{n,p,\delta,u_0} t^{1/2}$ for $t \in E$. This completes the proof, as one can redefine $u$ on the remaining null measure set to obtain the desired representative. 
\end{proof}

\section{Applications and improvements} \label{sec:Apps}

\subsection{Target manifold with nonpositive sectional curvature}

Since Eells and Sampson's seminal work \cite[p.~147]{EelSam64}, it is well understood that the classical harmonic map flow is better behaved if the target manifold is non-positively curved. More precisely, in that case Bochner's formula implies that the energy density $|\nabla u|^2$ is a subsolution of the heat equation, and in consequence it stays bounded. Since a possible blow-up of the energy density is the only obstacle to extending the solution in time, one can conclude that global-in-time smooth solutions exist. 

In this section we show that the $\delta$-regularized $p$-harmonic map flow \eqref{eq:reg-flow} inherits these good geometric properties of its classical counterpart, at least partially. In our case, Bochner's formula (Lemma \ref{lem:delta-Bochner-explicit}) still ensures that the energy density $|\nabla u|_\delta^p$ is a subsolution of a parabolic equation, and hence remains bounded. We record this observation as a corollary: 

\begin{corollary}
\label{cor:neg-curv-target-reg}
Let $\cM$, $\cN$ be two closed Riemannian manifolds, and assume that $\cN$ has nonpositive sectional curvature. If $u \colon [0,T) \times \cM \to \cN$ is a smooth solution of the $\delta$-flow \eqref{eq:reg-flow} with an initial map satisfying $|\nabla u_0|_\delta \le M$ everywhere on $\cM$, then 
\begin{equation}
|\nabla u(t,x)|_\delta \le e^{C(\cM,p) \cdot t} \cdot M
\qquad \text{for } x \in \cM, \, 0 < t < T\,,
\end{equation}
in other words, the energy density remains bounded. 
\end{corollary}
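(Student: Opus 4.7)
The plan is to invoke the Bochner-type formula for the $\delta$-flow referenced as Lemma~\ref{lem:delta-Bochner-explicit}. Since $u$ is a genuine $\cN$-valued map (as opposed to an ambient-space approximation, as in the Ginzburg-Landau setup), the second fundamental form term appearing on the right-hand side of \eqref{eq:reg-flow} can be traded for intrinsic geometric quantities via the standard commutation of covariant derivatives, producing curvature contributions from both $\cM$ (via its Ricci tensor) and $\cN$ (via its full Riemann tensor). This mirrors how the classical Eells-Sampson identity for $p=2$ decomposes its right-hand side. The key structural point is that, just as for $p=2$, the $\cN$-curvature contribution enters the differential inequality for $|\nabla u|_\delta^p$ through terms of the shape $R^\cN(\nabla u, \nabla u, \nabla u, \nabla u)$, whose sign is controlled by the sectional curvatures of~$\cN$.

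Under the assumption that $\cN$ has nonpositive sectional curvature, these $\cN$-terms have the favorable sign and may be discarded. What remains on the right-hand side is the $\cM$-Ricci contribution, which since $\cM$ is closed is pointwise bounded by a multiple of $|\nabla u|_\delta^p$, together with manifestly nonpositive terms such as a Hessian piece $-c(p)|\nabla u|_\delta^{p-2}|\nabla^2 u|^2$ that can also simply be dropped. The upshot is a differential inequality of the form
\[
(\pl_t - \cA_\delta)\, |\nabla u|_\delta^p \;\leq\; C(\cM, p)\, |\nabla u|_\delta^p,
\]
where $\cA_\delta$ is the $\delta$-flow analog of the uniformly Legendre-elliptic operator $\cA$ from Theorem~\ref{th:Bochner}.

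To conclude, I would apply the parabolic maximum principle on the closed manifold $\cM$, which is available because $\cA_\delta$ is uniformly elliptic in the sense of Legendre (with ellipticity constants independent of $\delta$). Concretely, setting $v(t,x) := e^{-C t}\,|\nabla u|_\delta^p(t,x)$ gives $(\pl_t - \cA_\delta)v \le 0$, so $\sup_\cM v(t, \cdot) \le \sup_\cM v(0, \cdot)$; rearranging and taking the $p$-th root (adjusting $C$ by a factor of $1/p$) yields the asserted bound $|\nabla u|_\delta \le e^{C(\cM,p)t}\, M$.

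The main obstacle is the first step: writing down and verifying the correct intrinsic form of Lemma~\ref{lem:delta-Bochner-explicit} for $\cN$-valued maps and isolating the sectional-curvature term of $\cN$ with a definite sign. Because the operator $\cA_\delta$ is not merely the Laplacian but carries an anisotropic correction proportional to $(p-2)|\nabla u|_\delta^{-2}\nabla^2 \cdot (\nabla u, \nabla u)$, one has to be careful that commuting derivatives with this correction does not spoil the sign of the curvature term — but since the correction is built out of $\nabla u$ itself, the $\cN$-term that results is still a multiple of $R^\cN(\nabla u, \nabla u, \nabla u, \nabla u)$ with no sign ambiguity. Once that is in hand, the rest of the argument is a textbook maximum principle application.
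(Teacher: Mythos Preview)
Your proposal is correct and follows essentially the same approach as the paper: invoke the Bochner formula of Lemma~\ref{lem:delta-Bochner-explicit}, use nonpositive sectional curvature of $\cN$ to drop the $\Rm^\cN$ term, absorb the $\cM$-Ricci contribution into a linear bound $(\pl_t-\cA)|\nabla u|_\delta^p \le C(\cM,p)|\nabla u|_\delta^p$, and apply the maximum principle to $e^{-Ct}|\nabla u|_\delta^p$. Your caution about the anisotropic correction in $\cA$ is unnecessary, since Lemma~\ref{lem:delta-Bochner-explicit} already packages the full curvature term as $\sum_{\alpha,\beta}\Rm(\pl_\alpha u,\pl_\beta u,\pl_\beta u,\pl_\alpha u)$ with the correct sign.
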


Note that this theorem \emph{does not} guarantee the solution can be extended to a larger time interval. This is because the corresponding regularity statement is unknown for general parabolic systems, even in the flat case: an $\R^d$-valued solution of $\pl_t u - |\nabla u|_\delta^{2-p} \dv(|\nabla u|_\delta^{p-2} \nabla u) = 0$ could possibly have a uniform bound on $|\nabla u|$ but not on $|\nabla^2 u|$, thus failing to be extendable, see Remark \ref{rmk:flow-existence}. 

\begin{proof}
For simplicity, let us first consider the case of flat $\cM$. Bochner's formula (Lemma \ref{lem:delta-Bochner-explicit}) then implies 
\begin{equation}
(\pl_t-\cA) |\nabla u|_{\delta}^p
\le p |\nabla u|_{\delta}^{p-2} \sum_{\alpha,\beta} \Rm(\pl_\alpha u, \pl_\beta u, \pl_\beta u, \pl_\alpha u)\,,
\end{equation}
for a certain elliptic operator $\cA$ (which depends on $u$). Since $\cN$ has nonpositive sectional curvature, each term on the right-hand side is nonpositive, which means that $|\nabla u|_{\delta}^p$ is a subsolution of a parabolic equation. It follows from the classical maximum principle that $|\nabla u(t,x)|_\delta \le M$ for $x \in \cM$ and $0 < t < T$. 

The general case requires a minor modification, which is standard (see e.g. \cite[Lem.~5.3.3]{LinWan08} for details). More precisely, changing the order of differentiation leads to an additional term involving the Ricci curvature of $\cM$. This leads to a slightly weaker bound 
\begin{equation}
\label{eq:curved-delta-L-Bochner-formula}
(\pl_t-\cA) |\nabla u|_{\delta}^p
\le C(\cM,p) |\nabla u|_\delta^p\,,
\end{equation}
and the previous argument can be applied to $e^{-C(\cM,p)t} |\nabla u|_\delta^p$, resulting in the final claim. 
\end{proof}

The main application of Eells and Sampson's existence and regularity result for the classical harmonic map flow into non-positively curved targets is to produce a smooth harmonic map that is homotopic to the initial data. If we assume existence and regularity, then we can obtain the analogous result for the $\delta$-flow:

\begin{theorem}
\label{th:nonpositive-curvature-limit}
Let $\cM$, $\cN$ be two closed Riemannian manifolds, and assume that $\cN$ has nonpositive sectional curvature. If $u: [0,\infty) \times \cM \to \cN$ is a smooth solution to the $\delta$-flow \eqref{eq:reg-flow}, then there is a map $u_\infty \in W^{1,\infty}(\cM,\cN)$ that is homotopic to the inital data $u_0$, and that weakly solves the $\delta$-regularized $p$-harmonic map equation
\begin{equation}
\label{eq:delta-harmonic-map-eq}
-\dv(|\nabla u_\infty|^{p - 2}_\delta \nabla u_\infty) 
= |\nabla u_\infty|^{p - 2}_\delta A_{u_\infty}(\nabla u_\infty, \nabla u_\infty)\,.
\end{equation}
\end{theorem}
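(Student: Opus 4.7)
The plan is to follow the classical Eells--Sampson approach adapted to the $\delta$-regularized $p$-flow: use the monotonicity of $E_\delta$ to produce times $t_k\to\infty$ along which $\pl_t u_{t_k}\to 0$ in $L^2$, extract a uniform subsequential limit $u_\infty$ via gradient bounds coming from the nonpositive curvature assumption, and pass to the limit in the equation via Minty's monotonicity trick. First, by Lemma~\ref{lem:energy-inequality} the energy $E_\delta(u_t)$ is non-increasing, and since $|\nabla u_t|_\delta\ge\delta$ it is bounded below by $\delta^p|\cM|/p$, so $E_\delta(u_t)\to E_\infty$ and $\int_0^\infty\!\int_\cM|\nabla u|_\delta^{p-2}|\pl_t u|^2<\infty$. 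For $p\ge 2$ we have $|\nabla u|_\delta^{p-2}\ge\delta^{p-2}$, so in particular $\int_0^\infty\!\int_\cM|\pl_t u|^2<\infty$ and we may select $t_k\to\infty$ with $\|\pl_t u_{t_k}\|_{L^2(\cM)}\to 0$.

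Second, to obtain uniform-in-$t$ Lipschitz bounds, I would consider the auxiliary function $w(t,x):=e^{-Ct}|\nabla u(t,x)|_\delta^p$ with $C=C(\cM,p)$ the constant from \eqref{eq:curved-delta-L-Bochner-formula}; then $(\pl_t-\cA)w\le 0$ pointwise for the uniformly elliptic operator $\cA$ of the $\delta$-Bochner formula. Applying the Ferretti--Safonov parabolic Harnack inequality (Theorem~\ref{th:Parabolic-Harnack}) on unit parabolic cylinders (working in finitely many charts on the closed manifold $\cM$) and reverting to $|\nabla u|_\delta^p$, one obtains
\begin{equation}
|\nabla u_t(x)|_\delta^p \le \frac{C(n,p,\cM)}{|P_1|}\int_{P_1(t,x)}|\nabla u|_\delta^p \le C(n,p,\cM)\cdot p\,E_\delta(u_0) \qquad \text{for } t\ge 1,
\end{equation}
the last estimate coming from energy monotonicity. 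Setting $v_k:=u_{t_k}$, Arzel\`a--Ascoli along a subsequence yields $v_k\to u_\infty$ uniformly with $\nabla v_k\wto\nabla u_\infty$ weakly-$\ast$ in $L^\infty$; in particular $u_\infty\in W^{1,\infty}(\cM,\cN)$ and takes values in $\cN$.

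The third step, which I expect to be the main obstacle, is to pass to the limit in the elliptic equation
\begin{equation}
-\dv\bigl(|\nabla v_k|_\delta^{p-2}\nabla v_k\bigr) = |\nabla v_k|_\delta^{p-2}A_{v_k}(\nabla v_k,\nabla v_k) - |\nabla v_k|_\delta^{p-2}\pl_t u_{t_k}\,,
\end{equation}
where the last term vanishes in $L^2$ but the first is quadratic in $\nabla v_k$, so weak-$\ast$ convergence does not suffice. The plan is to invoke Minty's monotonicity trick: the operator $\xi\mapsto(\delta^2+|\xi|^2)^{(p-2)/2}\xi$ is uniformly monotone for $p\ge 2$, so
\begin{equation}
c(p,\delta)\int_\cM|\nabla v_k-\nabla u_\infty|^2 \le \int_\cM \bigl(|\nabla v_k|_\delta^{p-2}\nabla v_k-|\nabla u_\infty|_\delta^{p-2}\nabla u_\infty\bigr)\cdot(\nabla v_k-\nabla u_\infty)\,.
\end{equation}
Testing the equation for $v_k$ against $v_k-u_\infty$ (which converges uniformly to zero while the right-hand side is bounded in $L^\infty$ plus a term vanishing in $L^2$), together with weak-$\ast$ convergence of the flux $|\nabla v_k|_\delta^{p-2}\nabla v_k$, forces the right-hand side above to zero. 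This upgrades convergence to strong $L^2$ (hence $L^q$ for every $q<\infty$ by interpolation) and a.e.\ along a further subsequence; dominated convergence then identifies the distributional limit as a weak solution of \eqref{eq:delta-harmonic-map-eq}. Homotopy with $u_0$ is immediate: the smooth flow $t\mapsto u_t\in C^0(\cM,\cN)$ is itself a homotopy from $u_0$ to $u_{t_k}$, and once $v_k$ lies in a small $C^0$-neighborhood of $u_\infty$, linear interpolation in $\R^d$ composed with the nearest-point projection $\pi_\cN$ closes the remaining gap.
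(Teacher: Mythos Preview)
Your proposal is correct and follows the same overall Eells--Sampson architecture as the paper: energy monotonicity to control $\pl_t u$, Bochner plus Harnack for uniform gradient bounds, subsequential extraction, and the homotopy via the flow plus nearest-point projection. The genuine difference lies in how you upgrade weak convergence of gradients to strong convergence. The paper invokes the methods of Lemma~\ref{lem:eps-reg-HMF-distr}: it works on spacetime cylinders $(0,1)\times\cM$, uses the weighted Hessian estimate of Theorem~\ref{th:eps-reg-W22} to bound $\nabla^2 u_i$ in $L^2$, and then applies Aubin--Lions to obtain strong $L^2$ gradient convergence. Your Minty argument bypasses the second-order estimates entirely, exploiting instead the uniform strong monotonicity $\langle DV(\xi)h,h\rangle\ge\delta^{p-2}|h|^2$ of the map $V(\xi)=(\delta^2+|\xi|^2)^{(p-2)/2}\xi$ together with the uniform convergence $v_k-u_\infty\to 0$ and the $L^\infty$ bound on the right-hand side. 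This is more elementary and self-contained; the paper's route has the advantage of reusing machinery already built for the $K\to\infty$ analysis, and of yielding the limit as a solution of the full time-dependent $\delta$-flow (with $\pl_t u_\infty=0$ deduced afterwards) rather than of the elliptic equation directly.
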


The idea, as in \cite{EelSam64}, is to obtain $u_\infty$ as a subsequential limit of time slices $u_t$ as $t\to \infty$. In particular, one chooses the subsequence among maps whose time derivative tends to $0$. We note that the global existence and regularity must be assumed here and cannot be proven directly, as in Remark \ref{rmk:flow-existence} and the discussion following the statement of Corollary \ref{cor:neg-curv-target-reg}.

\begin{proof}
The proof uses three main ingredients: the global energy rate of change formula, the Bochner formula, and the parabolic Harnack inequality. First, we will use the following rate of change formula for the $\delta$-energy:
\begin{equation}
\label{eq:closed-M-energy-ident}
E_\delta(u_t) + \int_{[0,t] \times \cM} |\nabla u|^{p - 2}_\delta |\partial_t u_t|^2 = E_\delta(u_0) 
\qquad \text{for } t > 0 \,,
\end{equation}
which follows by the same computation as in Lemma \ref{lem:energy-inequality} (applied to the $\delta$-flow in place of the $(\delta,K)$-flow). Now, we use the Bochner formula \eqref{eq:curved-delta-L-Bochner-formula}, which implies that the nonnegative quantity $v:= e^{-C(\cM,p)t}|\nabla u|^{p}_\delta$ is a subsolution of the parabolic operator $(\partial_t - \cA)$. We may then apply the Harnack inequality, Theorem \ref{th:Parabolic-Harnack}, to conclude that
\begin{equation}
v(t,x) \leq \frac{C(p, \cM)}{R^{n + 2}} \int_{[t - R^2, t] \times \cM} v \dd V \dd t 
\qquad \text{for } R < \text{inj}(x), \ t > R^2\,.
\end{equation}
Note that we may apply Theorem \ref{th:Parabolic-Harnack} by working in normal coordinates, since $R$ is smaller than the injectivity radius. This may change the ellipticity constant of $\cA$ and the density of the volume measure $dV$, but only by a factor $C(\cM)$. 

In particular, we can use \eqref{eq:closed-M-energy-ident} to further estimate the right-hand side, with
\begin{align}
|\nabla u|^{p}_\delta(x,t) = e^{C(\cM,p)t}v(x,t) &\leq \frac{C(\cM,p)}{R^{n + 2}}\int_{t - R^2}^te^{-C(\cM,p)(s - t)} E_\delta(u_s)\,ds \notag\\
& \leq C(p,R,\cM) E_\delta(u_0)\,.
\end{align}
With this uniform-in-time energy density bound in hand, by using the methods of Lemma \ref{lem:eps-reg-HMF-distr}, we may pick a~subsequence $t_i \to \infty$ such that $u_i(t,x) := u(t_i + t,x) \to u_\infty$ in $C^0((0,1) \times \cM)$, where $u$ solves the $\delta$-flow \eqref{eq:reg-flow} weakly. This method of proof also yields the weak convergence $\pl_t u_i \wto \pl_t u_\infty$ in $L^2((0,1) \times \cM)$.

On the other hand, the rate of change formula \eqref{eq:closed-M-energy-ident} with $t \to \infty$ implies that
\begin{equation}
\int_0^1 \int_{\cM} |\pl_t u_i|^2 \dd V \dd t 
\ale_{\delta,p} \int_{[t_i,t_i+1] \times \cM} |\nabla u|^{p - 2}_\delta |\pl_t u|^2 \dd V \dd t 
\longrightarrow 0 
\qquad \text{ as } i \to \infty\,.
\end{equation}
By the weak convergence $\pl_t u_i \wto \pl_t u_\infty$, we conclude that $u_\infty$ is independent of time. For time-independent maps, the $\delta$-flow equation is simply the $\delta$-regularized $p$-harmonic map equation \eqref{eq:delta-harmonic-map-eq} for any time slice.

To see that $u_0$ is homotopic to $u_\infty$, let us first note that $u_0$ is homotopic to $(u_i)_0$ for any $i$, with the homotopy witnessed by the flow $u \in C^0([0,t_i] \times \cM)$. Moreover, because of the strong convergence $u_i \to u_\infty \in C^0((0,1) \times \cM)$, we have strong convergence $(u_i)_0 \to (u_\infty)_0 \in C^0(\cM)$. In particular, $i$ may be chosen large enough so that the line segment $\gamma_x:[0,1]\to\R^d$ connecting the points $(u_i)_0(x),\,(u_\infty)_0(x) \in \cN \subset \R^d$ lies in the tubular neighborhood $B_{r_0}(\cN)$ where the nearest-point projection $\pi_{\cN}: B_{r_0}(\cN) \to \cN$ is continuous. In particular, the map $(s,x)\mapsto \pi_{\cN}(\gamma_x(s))$ is a homotopy between $u_i$ and $u_\infty$. Thus, the maps $u_0 \simeq (u_i)_0 \simeq (u_\infty)_0$ are homotopic.
\end{proof}

\subsection{Small initial energy}

As in Struwe's work in the $p = 2$ case \cite[Sec.~7]{Str88}, one can easily see that the $\delta$-flow with small initial energy is particularly well-behaved -- more precisely, it collapses to a point. Once again, the available existence theory is not sufficient to prove that $u_t \to \mathrm{const.}$ as $t \to \infty$, simply because the existence of a global-in-time solution is not guaranteed, see Remark \ref{rmk:flow-existence}. However, one can record the following simple proposition which assumes a possibly finite time horizon $T$, but the quantitative estimates are not dependent on $T$: 

\begin{proposition}
Let $u \colon [0,T) \times \R^n \to \cN$ be a smooth solution of the $\delta$-flow \eqref{eq:reg-flow} with $2 \le p \le n$ and with the initial map satisfying a global Lipschitz bound $|\nabla u_0| \le L$. Then there is $\eps_1(n,\cN,p,L) > 0$ such that 
\begin{equation}
\int_{\R^n} |\nabla u_0|_\delta^p - \delta^p \le \eps_1
\quad \Longrightarrow \quad 
|\nabla u_t(x)| \le \frac{C}{\sqrt{t}} \quad \text{for } x \in \R^n, \ 0 < t < T.
\end{equation}
\end{proposition}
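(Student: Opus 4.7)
The plan is to apply the $\eps$-regularity theorem (Theorem~\ref{th:eps-regularity}, in its version adapted to the $\delta$-flow) at parabolic scale $\sqrt{t}$ about each fixed $(t, x) \in (0, T) \times \R^n$. Consider the rescaled map $v(s, y) := u(t + t s,\, x + \sqrt{t}\, y)$ on $[-1, 0] \times \R^n$, which by Proposition~\ref{prop:para-scal} is a solution of the $\delta'$-flow with $\delta' := \sqrt{t}\, \delta$. A direct computation gives $\sqrt{t}\, |\nabla u(t, x)|_\delta = |\nabla v(0, 0)|_{\delta'}$, and by the scaling invariance of $\Phi$ the $\eps$-regularity hypothesis $\Phi^{\delta'}_{(0, 0)}(1; v) \le \eps_0$ coincides with
\begin{equation}
\Phi^\delta_{(t, x)}(t; u) = t^{p/2} \int_{\R^n} |\nabla u_0(y)|^p_\delta \, \rho_t(x - y) \dd y \le \eps_0\,.
\end{equation}
Verifying this uniformly in $(t, x)$ will yield $|\nabla v(0, 0)|_{\delta'} \le C$ and hence $|\nabla u(t, x)| \le |\nabla u(t, x)|_\delta \le C/\sqrt{t}$.

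The crux is to estimate $\Phi^\delta_{(t, x)}(t; u)$ by playing off the two bounds on the initial data against each other. On one hand, the Lipschitz assumption gives $|\nabla u_0|^p_\delta \le (L^2 + \delta^2)^{p/2}$ pointwise, so $\Phi^\delta_{(t, x)}(t; u) \le \bigl(t(L^2 + \delta^2)\bigr)^{p/2}$, which is small for small $t$. On the other hand, decomposing $|\nabla u_0|^p_\delta = \delta^p + (|\nabla u_0|^p_\delta - \delta^p)$ and using $\|\rho_t\|_\infty = (4 \pi t)^{-n/2}$ on the nonnegative remainder yields
\begin{equation}
\Phi^\delta_{(t, x)}(t; u) \le t^{p/2} \delta^p + C_n \, t^{(p - n)/2} \eps_1\,,
\end{equation}
whose $\eps_1$-term is small for large $t$ precisely because $p \le n$. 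The choice $\eps_1 := c(n, p) \, \eps_0^{n/p} \, (L^2 + \delta^2)^{-(n - p)/2}$ arranges for the two regimes to overlap: at every $t > 0$, one of the two bounds puts the non-$\delta^p$ part of $\Phi^\delta_{(t, x)}(t; u)$ below $\eps_0/2$. As the proposition asserts $\eps_1 = \eps_1(n, \cN, p, L)$, it is understood that $\delta$ is not too large compared to $L$, so that $(L^2 + \delta^2)^{-(n - p)/2} \approx L^{p - n}$ up to constants.

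The main obstacle is the residual $t^{p/2} \delta^p$ term, which grows with $t$ and manifests the persistent $\delta$-regularization defect running through the paper. We absorb it by allowing the constant $C$ in the conclusion to depend on $T \delta^2$; the same decomposition also bounds the initial-energy parameter $E_0 := \Phi^{\delta'}_{(1, 0)}(2; v) = \Phi^\delta_{(2t, x)}(2t; u)$ that feeds into Theorem~\ref{th:eps-regularity}, so the output constant from $\eps$-regularity is itself uniformly controlled. Together with the continuity of $\nabla v$ (used to extend the $\eps$-regularity bound to the endpoint $(0, 0)$ of $P_\gamma$), this yields $|\nabla u_t(x)| \le C/\sqrt{t}$ on all of $(0, T) \times \R^n$.
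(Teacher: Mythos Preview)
Your overall scheme --- rescale by $\sqrt{t}$ and apply $\eps$-regularity --- is the same as the paper's, and you correctly isolate the obstruction $t^{p/2}\delta^p$. However, your resolution of that obstruction does not work. The term $t^{p/2}\delta^p$ is a \emph{lower} bound for $\Phi^\delta_{(t,x)}(t;u)$ (since $|\nabla u_0|_\delta \ge \delta$ pointwise), so once $t^{p/2}\delta^p > \eps_0$, the hypothesis of Theorem~\ref{th:eps-regularity} simply fails and the theorem delivers nothing; you cannot ``absorb'' a violated hypothesis into the output constant $C$. Thus your argument only covers the range $0 < t \lesssim \eps_0^{2/p}\delta^{-2}$, and gives no bound whatsoever beyond that. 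Note too that the paper explicitly states (just before the Proposition) that the quantitative estimates are \emph{not} to depend on $T$, so even a $T$-dependent fix would fall short of the claim.

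The paper's remedy is to work not with $\Phi^\delta$ but with the \emph{reduced} monotone quantity
\[
\ov{\Phi}^{\delta}_{(t,x)}(s;u) := s^{p/2} H_s\!\bigl(|\nabla u_{t-s}|_\delta^p - \delta^p\bigr)(x) = \Phi^\delta_{(t,x)}(s;u) - s^{p/2}\delta^p,
\]
which is still nonnegative and nondecreasing in $s$ (the extra $\delta^2$-term in the monotonicity derivative $\pl_s \Phi^\delta$ exactly compensates for the subtraction, using $|\nabla u|_\delta^{p-2}\ge \delta^{p-2}$). One then checks that the proof of $\eps$-regularity goes through with $\ov{\Phi}$ in the hypothesis and $|\nabla u|$ (rather than $|\nabla u|_\delta$) in the conclusion; this uses a Bochner formula $(\pl_t - \cA)|\nabla u|^p \lesssim |\nabla u|^{p+2}$ for the \emph{unregularized} density, together with the pointwise bound $|\nabla u|^p \le |\nabla u|_\delta^p - \delta^p$. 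With $\ov{\Phi}$ in hand, your two-regime estimate applies cleanly (the offending $t^{p/2}\delta^p$ term is gone), and the conclusion follows with $C$ independent of $T$ and $\delta$.
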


There are two main differences between the classical case $p = 2$ and general $p \ge 2$. The first one is again the ability to conclude global-in-time existence from energy density bounds, the second difference only appears for the $\delta$-regularized flow. In the non-regularized case $\delta = 0$ one easily sees that the small energy assumption implies $\Phi_{(t,x)}(t; u) \le \eps_0$ at every point $(t,x)$. Rescaling (by $\sqrt{t}$) to the unit scale, one can apply $\eps$-regularity, which rescaling back, leads to $|\nabla u| \ale 1/\sqrt{t}$. If $\delta > 0$, one always has $\Phi_{(t,x)}(t; u) \ge t^{p/2} \delta^p$, which cannot be small for large $t > 0$. 

\begin{proof}[Sketch of proof]
For reasons outlined above, we consider the reduced monotone quantity 
\begin{equation}
\ov{\Phi}^{\delta}_{(t,x)} (s; u) := s^{p/2} H_s \left( |\nabla u_{t - s}|_{\delta}^p - \delta^p \right) (x)\,.
\end{equation}
It is still nonnegative and scale invariant: 
\begin{equation}
u^{(\lambda)}(t,x) = u(\lambda^2 t, \lambda x)
\quad \Longrightarrow \quad 
\ov{\Phi}_{(0,0)}^{\delta}(\lambda^2 s; u) = \ov{\Phi}_{(0,0)}^{\lambda \delta} \big( s; u^{(\lambda)} \big)\,.
\end{equation}
Moreover, a monotonicity formula holds: $\pl_s \ov{\Phi}^{\delta}_{(t,x)} (s; u) \ge 0$. These observations are easy consequences of the corresponding properties of $\Phi$. Inspecting the proof of Bochner's formula (Theorem \ref{lem:delta-Bochner-explicit}, especially \eqref{eq:delta-Bochner-first-calculation}), we see that 
\begin{equation}
(\pl_t - \cA) |\nabla u|^2 
\le 2\ps{\nabla u}{\nabla( A_u(\nabla u, \nabla u) )} 
+ \frac{p-2}{2} |\nabla u|^{-2} |\nabla |\nabla u|^2|^2\,,
\end{equation}
the only difference stemming from $e_u^{-1} \le |\nabla u|^{-2}$. In consequence, Bochner's formula holds also directly for $|\nabla u|^2$: $(\pl_t-\cA) |\nabla u|^p \ale |\nabla u|^{p+2}$. Finally, the proof of $\eps$-regularity (Theorem \ref{th:eps-regularity}) trivially carries over to the case of the $\delta$-flow. It is easily verified that also a version for the reduced monotone quantity holds: 
\begin{equation}
\ov{\Phi}^{\delta}_{(0,0)}(1; u) \le \eps_0
\quad \Longrightarrow \quad 
|\nabla u| \le C 
\text{ in } P_\gamma\,.
\end{equation}
The only non-trivial change in the proof involves the pointwise bound $|\nabla u|^p \le |\nabla u|_\delta^p - \delta^p$. 

\medskip

With these amendments, the proof from the classical case carries over. The main point of the proof is the observation that the scale-invariant energy of $u_0$ is $\eps_0$ small at every scale. To see this, first use the bound $|\nabla u_0|^p_\delta \le (L^2+\delta^2)^{p/2}$ to infer 
\begin{equation}
r^p \int_{\R^n} \left( |\nabla u_0(x)|^p_\delta - \delta^p \right) \rho_{r^2}(x,y) \dd x 
\le r^p (L^2+\delta^2)^{p/2} \le \eps_0\,,
\end{equation}
if only $0 < r < r_0$ and $r_0(L,\delta)$ is small. At larger scales $r \ge r_0$, 
\begin{equation}
r^p \int_{\R^n} \left( |\nabla u_0(x)|^p_\delta - \delta^p \right) \rho_{r^2}(x,y) \dd x 
\le C(n) r^p r^{-n} \eps_1 
\le \eps_0\,,
\end{equation}
if only we choose $\eps_1 \le C(n)^{-1} \eps_0 r_0^{n-p}$. Thus, we have verified that the reduced monotone quantity $\ov{\Phi}$ is $\eps_0$-small: $\ov{\Phi}_{(t,x)}(t; u) \le \eps_0$ at every point $(t,x)$ with $0 < t < T$. Fixing such a point and applying $\eps$-regularity to the map $v(t',x') := u(t+tt', x + \sqrt{t} x')$, we infer that $|\nabla v(0,0)| \le C$, and rescaling back, that $|\nabla u(t,x)| \le \frac{C}{\sqrt{t}}$. 
\end{proof}

\appendix

\section{Formulas for the $\delta$-regularized flow}

The main focus of this paper was the analysis of solutions for the approximating problem, the $(\delta,K)$-flow \eqref{eq:flow-delta-K}, with special emphasis on the (lack of) dependence on $K$. However, smooth solutions of the $\delta$-flow enjoy many of the same properties, and the presented proofs easily carry over to this case. For the reader's convenience, we provide brief accounts of the two main ingredients: the monotonicity formula (counterpart of Theorem \ref{th:monotonicity-formula}) and Bochner's formula (counterpart of Theorem \ref{th:Bochner}). 

\begin{lemma}[Monotonicity formula]
\label{lem:delta-monotonicity-formula}
Let $u$ be a smooth solution of the $\delta$-flow \eqref{eq:reg-flow} and denote $\Phi^{\delta}_{(t,x)} (s; u) := s^{p/2} H_s |\nabla u_{t - s}|_{\delta}^p (x)$. 
Then $s \mapsto \Phi^{\delta,K}_{(t,x)}(s)$ is a non-decreasing function. More precisely, its derivative is exactly 
\begin{equation}
\pl_s \Phi_{(t,x)}^{\delta}(s) 
= \frac{p}{2} s^{\frac{p-4}{2}} H_s \left( |\nabla u_{t-s}|_{\delta}^{p-2} \left( \frac 12 |2 s \pl_t u_{t-s} - (y-x) \nabla u_{t-s}|^2 
+ s \delta^2 \right) \right) (x)\,.
\end{equation}
\end{lemma}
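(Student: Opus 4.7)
The plan is to mimic the argument used for the Ginzburg--Landau approximation in Theorem \ref{th:monotonicity-formula}, replacing the penalization term $-\tfrac{1}{2}\nabla F_K(u)$ by the geometric nonlinearity $A_u(\nabla u,\nabla u)$, and exploiting orthogonality between normal and tangential vectors to $\cN$ to kill the cross term that would otherwise spoil the sign.

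First, by translation in spacetime we may reduce to $(t,x)=(0,0)$ and, as in the proof of Theorem \ref{th:monotonicity-formula}, it suffices to compute $\pl_\lambda\big|_{\lambda=1}\Phi^{\delta}_{(0,0)}(\lambda^{2};u)$ and then recover the general $s$ by applying the result to $v:=u^{(\mu)}$ with $\mu=s^{1/2}$. Using parabolic scaling (which for the $\delta$-flow sends parameters $\delta\mapsto\lambda\delta$), one writes
\begin{equation}
\pl_\lambda \Phi^{\delta}_{(0,0)}(\lambda^{2};u)
= \pl_\lambda \Phi^{\lambda\delta}_{(0,0)}(1;u^{(\lambda)})
= \int_{\R^{n}} \pl_\lambda\!\left(|\nabla u^{(\lambda)}_{-1}|^{p}_{\lambda\delta}\right)\rho_{1}\,dx .
\end{equation}
Expanding $|\nabla u^{(\lambda)}_{-1}|^{2}_{\lambda\delta}=|\nabla u^{(\lambda)}_{-1}|^{2}+\lambda^{2}\delta^{2}$, the derivative splits into (a) the manifestly positive contribution $\lambda p\int|\nabla u^{(\lambda)}_{-1}|^{p-2}_{\lambda\delta}\delta^{2}\rho_{1}\,dx$, and (b) the term $p\int|\nabla u^{(\lambda)}_{-1}|^{p-2}_{\lambda\delta}\,\nabla u^{(\lambda)}_{-1}\cdot\nabla\pl_\lambda u^{(\lambda)}_{-1}\,\rho_{1}\,dx$, which we integrate by parts in space.

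At $\lambda=1$ one has $\pl_\lambda\big|_{\lambda=1} u^{(\lambda)}_{-1}(x)=-2\pl_t u_{-1}+x\nabla u_{-1}$, and $\nabla\rho_{1}(x)=-\tfrac{x}{2}\rho_{1}(x)$. Using the $\delta$-flow in the form
\begin{equation}
\dv\!\left(|\nabla u|^{p-2}_{\delta}\nabla u\right)
= |\nabla u|^{p-2}_{\delta}\pl_t u - |\nabla u|^{p-2}_{\delta} A_{u}(\nabla u,\nabla u),
\end{equation}
(obtained by multiplying \eqref{eq:reg-flow} by $|\nabla u|^{p-2}_{\delta}$), the integrated-by-parts expression becomes, after suppressing the $-1$ subscript,
\begin{equation}
p\int_{\R^{n}}|\nabla u|^{p-2}_{\delta}\,\rho_{1}\,
\Bigl(\tfrac{1}{2}(2\pl_t u - x\nabla u) - A_{u}(\nabla u,\nabla u)\Bigr)\cdot\bigl(2\pl_t u - x\nabla u\bigr)\,dx.
\end{equation}
This is the step at which the proof diverges from Theorem \ref{th:monotonicity-formula}: here one exploits that $u$ takes values in $\cN$, so that $\pl_t u$ and each spatial derivative $\pl_\alpha u$ lie in $T_{u}\cN$, while the second fundamental form $A_{u}(\nabla u,\nabla u)$ lies in $T^{\perp}_{u}\cN$. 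Consequently the cross term $A_{u}(\nabla u,\nabla u)\cdot(2\pl_t u-x\nabla u)$ vanishes pointwise, leaving a perfect square. Adding back contribution~(a) produces
\begin{equation}
\pl_\lambda\bigl|_{\lambda=1}\Phi^{\delta}_{(0,0)}(\lambda^{2};u)
= p\int_{\R^{n}}|\nabla u_{-1}|^{p-2}_{\delta}\Bigl(\tfrac{1}{2}|2\pl_t u_{-1}-x\nabla u_{-1}|^{2}+\delta^{2}\Bigr)\rho_{1}\,dx\ge 0.
\end{equation}
Finally, the formula at a general scale $s$ is obtained exactly as at the end of the proof of Theorem \ref{th:monotonicity-formula}: apply the result to $v:=u^{(\mu)}$ with $\mu=s^{1/2}$, use $2s\pl_s\Phi^{\delta}_{(0,0)}(s;u)=\pl_\lambda\big|_{\lambda=1}\Phi^{\mu\delta}_{(0,0)}(\lambda^{2};v)$, and change variables back to $u$ to produce the stated $s^{(p-4)/2}$-weighted heat average at the point $x$ with the integrand $|\nabla u_{t-s}|^{p-2}_{\delta}(\tfrac{1}{2}|2s\pl_t u_{t-s}-(y-x)\nabla u_{t-s}|^{2}+s\delta^{2})$.

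The only genuinely non-routine step is the orthogonality cancellation $A_{u}(\nabla u,\nabla u)\cdot(2\pl_t u-x\nabla u)=0$; everything else is a line-by-line adaptation of the proof of Theorem \ref{th:monotonicity-formula} with $F_{K}\equiv 0$ and $K=0$, and in particular the absence of the $F_{K}$ term removes the $sF_{K}(u_{t-s})$ contribution that appears in the $(\delta,K)$ case.
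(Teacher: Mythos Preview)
Your proposal is correct and follows essentially the same approach as the paper's own proof: both reduce to the computation in Theorem \ref{th:monotonicity-formula} with $K=0$, and both dispose of the extra term arising from the equation by observing that $A_u(\nabla u,\nabla u)\in T_u^\perp\cN$ is orthogonal to the tangential vector $-2\pl_t u + x\nabla u$, after which the perfect square and the rescaling to general $s$ proceed identically.
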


\begin{proof}[Sketch of proof]
The proof follows by exactly the same computation as in Theorem \ref{th:monotonicity-formula} with the simplification $K = 0$; the only difference lies in the equation satisfied by $u$. Recall that by the previous computation we obtain 
\begin{gather}
\pl_\lambda \big|_{\lambda=1} \Phi_{(0,0)}^{\delta}(\lambda^2; u)
= p \int_{\R^n} |\nabla u|_{\delta}^{p-2} \cdot \delta^2 \cdot \rho_1 \dd x \notag\\
+ p \int_{\R^n} \left( -|\nabla u|_{\delta}^{p-2} \nabla u \cdot \left( -\frac x2 \right) 
- \dv \left( |\nabla u|_{\delta}^{p-2} \nabla u \right) \right)
\cdot (-2 \pl_t u + x \nabla u) \rho_1 \dd x\,,
\end{gather}
where, to avoid clutter, we omitted the subscript in $u_{-1}$ indicating that the map is to be evaluated at time $t=-1$. Here, the $p$-Laplace term $\dv \left( |\nabla u|_{\delta,K}^{p-2} \nabla u \right)$ is not exactly equal to $|\nabla u|_{\delta}^{p-2} \pl_t u$, but the difference between these two is orthogonal to $\cN$, whereas the term $-2 \pl_t u + x \nabla u$ is tangent to $\cN$. Thus, the second integral simplifies to 
\begin{equation}
\int_{\R^n} \left( -|\nabla u|_{\delta}^{p-2} \nabla u \cdot \left( -\frac x2 \right) 
- |\nabla u|_{\delta}^{p-2} \pl_t u \right) 
\cdot (-2 \pl_t u + x \nabla u) \rho_1 \dd x\,,
\end{equation}
which is indeed a perfect square. After some simplifications, we arrive at the desired form of $\pl_s \Phi_{(t,x)}^{\delta}(s)$, at least for $s = 1$. The general case follows by rescaling, as in Theorem \ref{th:monotonicity-formula}. 
\end{proof}

\begin{lemma}
\label{lem:delta-Bochner-explicit}
If $u$ is a smooth solution of the $\delta$-flow \eqref{eq:reg-flow}, and $\cA$ is the elliptic operator 
\begin{equation}
\cA \psi := \Delta \psi + (p - 2) |\nabla u|_{\delta}^{-2} \nabla^2 \psi(\nabla u, \nabla u)\,,
\end{equation}
then the $p$-energy density $|\nabla u|_{\delta}^p$ satisfies the following Bochner-type formula: 
\begin{align}
\label{eq:delta-L-Bochner-formula}
(\pl_t-\cA) |\nabla u|_{\delta}^p
\le p |\nabla u|_{\delta}^{p-2} \sum_{\alpha,\beta} \Rm(\pl_\alpha u, \pl_\beta u, \pl_\beta u, \pl_\alpha u)\,.
\end{align}
\end{lemma}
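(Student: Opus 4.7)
The plan is to mirror the computation of Lemma \ref{lem:Bochner-explicit}, specializing to $F_K \equiv 0$ while accounting for the new source term $A_u(\nabla u, \nabla u)$. Rewriting the $\delta$-flow \eqref{eq:reg-flow} as
\begin{equation}
\pl_t u \;=\; \Delta u \,+\, \tfrac{p-2}{2}\, e_u^{-1}\,(\nabla e_u \cdot \nabla u) \,+\, A_u(\nabla u, \nabla u)\,,
\qquad e_u := \delta^2 + |\nabla u|^2\,,
\end{equation}
we compute as before
\begin{equation}
(\pl_t - \Delta)|\nabla u|^2 \;=\; 2\ps{\nabla u}{\nabla(\pl_t - \Delta)u} \,-\, 2|\nabla^2 u|^2\,,
\end{equation}
and split the resulting expression into the $F_K \equiv 0$ part (which matches \eqref{eq:L-Bochner-formula} verbatim, producing the terms proportional to $(p-2)$) and a new term $2\ps{\nabla u}{\nabla A_u(\nabla u,\nabla u)}$ coming from the second fundamental form.

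The heart of the argument is the handling of this new term. Since $u$ maps into $\cN$, the vector $\pl_\alpha u$ is tangent to $\cN$ while $A_u(\pl_\beta u, \pl_\beta u)$ is normal, so $\pl_\alpha u \cdot A_u(\pl_\beta u, \pl_\beta u) \equiv 0$. Differentiating this pointwise identity and summing in $\alpha$ gives
\begin{equation}
\ps{\nabla u}{\nabla A_u(\nabla u, \nabla u)} \;=\; -\,\Delta u \cdot A_u(\nabla u, \nabla u)\,.
\end{equation}
Substituting the equation for $\Delta u$ and using that $\pl_t u$ and $e_u^{-1}(\nabla e_u \cdot \nabla u)$ are both tangent to $\cN$ at $u$ (hence orthogonal to the normal field $A_u(\nabla u,\nabla u)$), only the $A_u$-contribution survives:
\begin{equation}
\ps{\nabla u}{\nabla A_u(\nabla u,\nabla u)} \;=\; |A_u(\nabla u, \nabla u)|^2\,.
\end{equation}

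Next I convert this extrinsic quantity into intrinsic curvature via the Gauss equation, which for $X,Y,Z,W \in T_y\cN$ reads $\Rm^\cN(X,Y,Z,W) = \ps{A_y(X,W)}{A_y(Y,Z)} - \ps{A_y(X,Z)}{A_y(Y,W)}$. Applied with $X=\pl_\alpha u$, $W = \pl_\alpha u$, $Y = Z = \pl_\beta u$ and summed,
\begin{equation}
\sum_{\alpha,\beta} \Rm^\cN(\pl_\alpha u, \pl_\beta u, \pl_\beta u, \pl_\alpha u)
\;=\; |A_u(\nabla u,\nabla u)|^2 \,-\, \sum_{\alpha,\beta} |A_u(\pl_\alpha u, \pl_\beta u)|^2\,.
\end{equation}
On the other hand, decomposing each $\pl_{\alpha\beta} u$ into its components tangent and normal to $\cN$, the normal component is precisely $A_u(\pl_\alpha u, \pl_\beta u)$, which yields $|\nabla^2 u|^2 \ge \sum_{\alpha,\beta} |A_u(\pl_\alpha u, \pl_\beta u)|^2$. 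Combining these two facts,
\begin{equation}
2|A_u(\nabla u,\nabla u)|^2 - 2|\nabla^2 u|^2 \;\le\; 2\sum_{\alpha,\beta} \Rm^\cN(\pl_\alpha u, \pl_\beta u, \pl_\beta u, \pl_\alpha u)\,,
\end{equation}
so that the geometric contribution to $(\pl_t - \Delta)|\nabla u|^2$ is bounded above by the claimed curvature term.

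Finally, I assemble the pieces exactly as in Lemma \ref{lem:Bochner-explicit}: absorb the second-order term $(p-2)e_u^{-1}\nabla^2 e_u(\nabla u,\nabla u)$ into the operator $\cA$, then raise $e_u$ to the power $p/2$ via the chain rule to cancel the indefinite cross-term $(p-2)e_u^{-1}(\nabla^2 u)(\nabla u,\nabla e_u) = \tfrac{p-2}{2}e_u^{-1}|\nabla e_u|^2$ against the contribution $-\tfrac{p(p-2)}{4}e_u^{p/2-2}|\nabla e_u|^2$ produced by differentiating $e_u^{p/2}$. After this, the remaining negative terms (the tangential Hessian piece and $-\tfrac{p^2(p-2)}{4}e_u^{p/2-3}|\nabla u \cdot \nabla e_u|^2$) may be discarded, leaving the desired inequality with the factor $p\,|\nabla u|_\delta^{p-2}$ in front of the curvature. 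The main obstacle is the geometric step: cleanly isolating the orthogonality of tangent and normal contributions so that the extrinsic derivatives of $A$ reduce to the intrinsic Riemann tensor of $\cN$ without spurious cross terms.
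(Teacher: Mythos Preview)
Your proposal is correct and follows essentially the same route as the paper's proof: both redo the computation of Lemma~\ref{lem:Bochner-explicit} with the source term $A_u(\nabla u,\nabla u)$ in place of $-\tfrac12\nabla F_K(u)$, reduce $\ps{\nabla u}{\nabla A_u(\nabla u,\nabla u)}$ to $-\ps{\Delta u}{A_u(\nabla u,\nabla u)}$ via orthogonality, and then combine this with the normal-part bound $|\nabla^2 u|^2 \ge \sum_{\alpha,\beta}|A_u(\pl_\alpha u,\pl_\beta u)|^2$ and the Gauss equation. The only cosmetic difference is that you evaluate $-\ps{\Delta u}{A_u}$ by substituting the PDE for $\Delta u$ and using that $\pl_t u$ and $e_u^{-1}\nabla e_u\cdot\nabla u$ are tangent, whereas the paper directly invokes the geometric identity $(\pl_{\alpha\beta}u)^\perp = -A_u(\pl_\alpha u,\pl_\beta u)$ to read off $(\Delta u)^\perp$; both yield $|A_u(\nabla u,\nabla u)|^2$.
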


\begin{proof}
We repeat the computation in Lemma \ref{lem:Bochner-explicit}, with $-\frac 12 \nabla F_K(u)$ replaced by $A_u(\nabla u, \nabla u)$. First, we compute the heat operator applied to $e_u$:
\begin{align}
\label{eq:delta-Bochner-first-calculation}
(\partial_t - \Delta) e_u 
&= 2\ps{\nabla u}{\nabla( A_u(\nabla u, \nabla u) )} - 2|\nabla^2 u|^2 - (p-2) e_u^{-2}|\nabla u\cdot \nabla e_u|^2 \notag\\
& \pheq + \frac{p-2}{2} e_u^{-1} |\nabla e_u|^2 + (p-2) e_u^{-1}(\nabla^2 e_u)(\nabla u,\nabla u)\,.
\end{align}
As before, the second-order term $(p-2) e_u^{-1}(\nabla^2 e_u)(\nabla u,\nabla u)$ is used to form the operator $\cA$. Then, we consider $e_u^{p/2}$ to get rid of the term $\frac{p-2}{2} e_u^{-1} |\nabla e_u|^2$, resulting in 
\begin{align}
(\pl_t-\cA) e_u^{p/2}
& = p e_u^\frac{p-2}{2} \left( - \frac{p(p-2)}{4} e_u^{-2} |\nabla u \cdot \nabla e_u|^2 
+ \ps{\nabla u}{\nabla( A_u(\nabla u, \nabla u) )} - |\nabla^2 u|^2 \right)\,.
\end{align}
The first term is nonpositive, so we only need to interpret the remaining two in terms of curvature. This is a~classical computation which we now recall. 

Observe that each derivative $\pl_\alpha u$ is orthogonal to $A_u(\nabla u, \nabla u)$, which implies that 
\begin{equation}
\ps{\nabla u}{\nabla( A_u(\nabla u, \nabla u) )}
= - \ps{\Delta u}{A_u(\nabla u, \nabla u)}
\end{equation}
by differentiating $\ps{\pl_\alpha u}{A_u(\nabla u, \nabla u)} \equiv 0$. 
Then recall that the orthogonal part of each derivative $\pl_{\alpha\beta} u$ is $-A_u(\pl_\alpha u, \pl_\beta u)$, so that the above equals
\begin{equation}
\ps{\sum_{\alpha} A_u(\pl_\alpha u, \pl_\alpha u)}{\sum_{\beta} A_u(\pl_\beta u, \pl_\beta u)}
= \sum_{\alpha,\beta} \ps{A_u(\pl_\alpha u, \pl_\alpha u)}{A_u(\pl_\beta u, \pl_\beta u)}\,.
\end{equation}
Similarly, we can decompose the Hessian term -- ignoring the tangential second-order derivatives, we obtain 
\begin{equation}
- |\nabla^2 u|^2 
\le - \sum_{\alpha,\beta} |(\pl_{\alpha\beta}u)^{\perp}|^2 
= - \sum_{\alpha,\beta} \ps{A_u(\pl_\alpha u, \pl_\beta u)}{A_u(\pl_\alpha u, \pl_\beta u)}\,.
\end{equation}
Now the claim follows by Gauss' equation 
\begin{equation}
\Rm(v_1,v_2,v_3,v_4) = \ps{A(v_1,v_4)}{A(v_2,v_3)} - \ps{A(v_1,v_3)}{A(v_2,v_4)}\,.
\end{equation}
\end{proof}

\bibliographystyle{aomalpha}
\bibliography{HMN_epsreg}

\end{document}